\documentclass[12pt,a4paper]{article}

\usepackage{amsmath,amssymb,amsthm}

\numberwithin{equation}{section}
\newtheorem{theorem}{Theorem}[section]
\newtheorem{lemma}[theorem]{Lemma}
\newtheorem{proposition}[theorem]{Proposition}

\theoremstyle{definition}
\newtheorem{definition}[theorem]{Definition}
\theoremstyle{remark}
\newtheorem{remark}[theorem]{Remark}

\newcommand{\C}{\mathbb C}
\newcommand{\N}{\mathbb N}
\newcommand{\D}{\mathcal D}
\newcommand{\B}{\mathcal B}
\newcommand{\Hankel}{\mathcal H}

\newcommand{\dd}{\,\mathrm d}
\newcommand{\Log}{\operatorname{Log}}
\newcommand{\Arg}{\operatorname{Arg}}
\newcommand{\dist}{\operatorname{dist}}
\newcommand{\im}{\operatorname{Im}}
\newcommand{\re}{\operatorname{Re}}

\title{\textbf{Rigorous Asymptotic Analysis of 3-Noncrossing Skeleton Diagrams}}
\author{Yangyang Zhao}
\date{August 2026}

\begin{document}
\maketitle

\begin{abstract}
We give a complete rigorous asymptotic analysis of the generating functions of
3-noncrossing skeleton matchings and canonical 3-noncrossing skeleton diagrams.
Let $F_3$ be the ordinary generating function of 3-noncrossing matchings and let
$S(y)=\sum_{n\geq0}S(n)y^n$ be determined by
\[
  S\bigl(zF_3(z)^2\bigr)=F_3(z).
\]
The proof is deliberately ordered to avoid circularity.  First,
Lagrange--B\"urmann inversion, a Stieltjes representation of $F_3$, exact
cut-boundary estimates, and a moving horizontal Hankel contour give the
coefficient estimate independently of any $\Delta$-analyticity of $S$:
\[
  S(n)\sim \frac{24}{\pi A^5}\,\sigma^{-n}n^{-5}.
\]
This estimate supplies boundary regularity of $S$ and $S'$.  We then prove a
global biholomorphic inversion theorem, continuation across every nonprincipal
point of the convergence circle, and a logarithmically perturbed sectorial
inverse theorem.  A complete disk-chain and monodromy argument yields a
single-valued continuation to a standard $\Delta$-domain.  At the principal
singularity,
\begin{align*}
 S(y)={}&Q_4\!\left(1-\frac y\sigma\right)
 -\frac1{\pi A^5}\left(1-\frac y\sigma\right)^4
   \Log\!\left(1-\frac y\sigma\right)\\
 &+O\!\left(\left(1-\frac y\sigma\right)^5
   \left[1+\left|\Log\!\left(1-\frac y\sigma\right)\right|\right]\right).
\end{align*}
Finally, the canonical composition
\[
 S_3^{[4]}(z)=(1-z)\bigl(S(\vartheta(z))-1-\vartheta(z)\bigr)
\]
is shown to be $\Delta$-analytic at its unique dominant singularity
\[
 \eta=0.49340718057613087519\ldots,
\]
and
\[
 [z^n]S_3^{[4]}(z)\sim
 7892.16205625817\ldots\, n^{-5}\eta^{-n}.
\]
The argument retains the methods and detailed estimates of the original
proofs while closing the analytic gaps in the earlier dissertation treatment.
\end{abstract}

\tableofcontents

\section{Introduction and combinatorial background}

A diagram on $[N]=\{1,\ldots,N\}$ is represented by vertices on a horizontal
line and arcs $(i,j)$ in the upper half-plane.  It is $k$-noncrossing when it
contains no $k$ mutually crossing arcs
\[
  i_1<i_2<\cdots<i_k<j_1<j_2<\cdots<j_k.
\]
For the standard crossing and nesting terminology for matchings and
partitions, see \cite{ChenEtAl}.
The \emph{dependency graph} of a diagram has one vertex for every arc and joins
two vertices exactly when the corresponding arcs cross.  A skeleton diagram is
a diagram whose core has no noncrossing arc and whose dependency graph is
connected.  Equivalently, after isolated vertices and parallel arcs in stacks
are removed, every remaining arc is crossed and all arcs belong to a single
crossing component.  A $V_k$-shape is a $k$-noncrossing matching whose stacks
all have length one; the shape of a skeleton is obtained by replacing every
stem by one arc and then deleting all isolated vertices.  A diagram is
irreducible if no vertex separates its arcs into a left and a right nonempty
part.

These objects occur as the outer crossing framework in pseudoknot RNA
structures.  In the folding algorithm of Huang, Peng, and Reidys
\cite{HuangPengReidys}, the leaves of the skeleton tree are skeleton
structures, so their enumeration controls an exponential part of the
algorithmic complexity.  Sections 4.1--4.3 of Zhao's dissertation
\cite[Chapter~4, Sections~4.1--4.3, pp.~81--90]{ZhaoThesis} established the
relevant combinatorial decompositions.  If
$F_3(z)$ counts 3-noncrossing matchings by number of arcs, $\operatorname{Irr}(z)$
counts irreducible 3-noncrossing matchings, and $S(n)$ counts 3-noncrossing
skeleton matchings with $n$ arcs (with the harmless conventions
$S(0)=S(1)=1$), then
\[
 1+\sum_{n\geq1}S(n)F_3(z)^{2n-1}z^n=\operatorname{Irr}(z),
 \qquad
 \operatorname{Irr}(z)=2-\frac1{F_3(z)}.
\]
Consequently,
\begin{equation}\label{eq:implicit-intro}
  S\bigl(zF_3(z)^2\bigr)=F_3(z),
  \qquad S(y)=\sum_{n\geq0}S(n)y^n.
\end{equation}

For canonical 3-noncrossing skeleton diagrams with minimum stack length three
and minimum arc length four, the two-step inflation of a skeleton shape gives
\begin{equation}\label{eq:canonical-intro}
 S_3^{[4]}(z)=(1-z)G(\vartheta(z)),
 \qquad G(y)=S(y)-1-y,
\end{equation}
where
\begin{equation}\label{eq:theta-def-intro}
 w_0(z)=\frac{z^4}{1-z^2+z^6},
 \qquad
 \vartheta(z)=\left(\frac{z\sqrt{w_0(z)}}{1-z}\right)^2
 =\frac{z^6}{(1-z)^2(1-z^2+z^6)}.
\end{equation}
Indeed, inflating each shape arc into a stem and inserting isolated vertices in
the remaining $2n-1$ gaps yields
\[
 \sum_{\gamma\in\mathcal{IS}(n)}S_\gamma(z)
 =(1-z)\,\mathcal{IS}\!\left(
   \frac{z^6}{1-2z+2z^3-z^4-2z^7+z^8}\right),
\]
and the relation
$\mathcal{IS}(x)=\sum_{n\geq2}S(n)(x/(1+x))^n$ reduces this expression to
\eqref{eq:canonical-intro}--\eqref{eq:theta-def-intro}.

The analytic strategy introduced in Section 4.4 of the dissertation transferred
the singular behaviour of the D-finite matching series through an implicit
functional equation and then through a nontrivial canonical composition.  The
resulting asymptotic formulae and constants are retained here.  Several
intermediate steps, however, required additional justification: the
global choice of the inverse branch, exclusion of critical boundary preimages,
single-valued gluing on a genuine $\Delta$-domain, and the contribution made by
the logarithmic term of the inverse map itself.  The present author supplies
those missing arguments here.  We retain the original analytic mechanism, but
put the proof in the logically noncircular order
\begin{align*}
 \text{independent coefficient asymptotics}
 &\Longrightarrow \text{boundary regularity}\\
 &\Longrightarrow \Delta\text{-continuation}\\
 &\Longrightarrow \text{singular expansion and transfer}.
\end{align*}

\section{Notation and principal results}

Write $F=F_3$.  The explicit hypergeometric expression is
\begin{equation}\label{eq:F-def}
 F(z)=\frac1{4z^2}\left(
  1+6z-{}_2F_1\!\left(-\frac12,-\frac12;1;16z\right)
  -2z\,{}_2F_1\!\left(-\frac12,\frac32;3;16z\right)
 \right),
\end{equation}
with removable value $F(0)=1$.  Its Taylor expansion begins
\[
 F(z)=1+z+3z^2+14z^3+84z^4+\cdots.
\]
Set
\begin{equation}\label{eq:constants}
 \rho:=\frac1{16},\qquad F_0:=F(\rho),\qquad
 \sigma:=\rho F_0^2,\qquad
 A:=1+\frac{2\rho F'(\rho)}{F_0}.
\end{equation}
Direct evaluation gives
\begin{equation}\label{eq:constants-exact}
 F_0=88-\frac{4096}{15\pi}=\frac{8(165\pi-512)}{15\pi},
 \quad
 A=\frac{1280-405\pi}{165\pi-512}>0,
 \quad
 \sigma=\frac{4(165\pi-512)^2}{225\pi^2}.
\end{equation}
The positivity asserted here is exact, rather than numerical.  Indeed, the
classical rational bounds
\[
 \frac{333}{106}<\pi<\frac{22}{7}
\]
give
\[
 165\pi-512>\frac{673}{106}>0,
 \qquad
 1280-405\pi>\frac{50}{7}>0.
\]
Numerically,
\begin{align*}
 A&=1.203085107635387\ldots,\\
 \sigma&=0.0729243577550381\ldots,\\
 \sigma^{-1}&=13.71283931439099\ldots.
\end{align*}
All decimal values displayed in this article are rounded from directed interval
computations.  One reproducible certification starts with Machin's identity
\[
 \pi=16\arctan\frac15-4\arctan\frac1{239}
\]
and bounds each arctangent by consecutive partial sums of its alternating
series.  Rational interval arithmetic in \eqref{eq:constants-exact}, followed
by interval Newton steps for the equation defining $\eta$, certifies every
printed digit; none of the inequalities below relies only on a floating-point
comparison.
Define
\begin{equation}\label{eq:Phi-D}
 \Phi(z):=zF(z)^2,\qquad
 \D:=\C\setminus[\rho,\infty),\qquad
 \B_\sigma:=\{y\in\C:|y|<\sigma\}.
\end{equation}

The main conclusions are the following.

\begin{theorem}[Skeleton-matching coefficients]\label{thm:S-coeff-main}
As $n\to\infty$,
\begin{equation}\label{eq:S-coeff-main}
 S(n)\sim \frac{24}{\pi A^5}\,\sigma^{-n}n^{-5}
 =\frac{24(165\pi-512)^5}{\pi(1280-405\pi)^5}\,
   \sigma^{-n}n^{-5}.
\end{equation}
The proof in Section~\ref{sec:independent-coeff} is independent of any
$\Delta$-analyticity of $S$.
\end{theorem}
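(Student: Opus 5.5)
The plan is to extract $S(n)$ directly from the functional equation \eqref{eq:implicit-intro} by Lagrange--B\"urmann inversion, so that no analytic continuation of $S$ beyond its disk of convergence is needed. Since $\Phi(z)=zF(z)^2$ has $\Phi'(0)=1$, it has a local inverse $z=\Phi^{-1}(y)$ near $0$, and $S=F\circ\Phi^{-1}$. Lagrange--B\"urmann gives
\[
 S(n)=\frac1n[z^{n-1}]\,F'(z)F(z)^{-2n}
 =\frac1{2\pi i\,n}\oint_{|z|=r}\frac{F'(z)}{z^{n}F(z)^{2n}}\dd z ,
\]
equivalently $S(n)=\frac{1}{2\pi i}\oint \Phi'(z)F(z)\,\Phi(z)^{-n-1}\dd z$. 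This turns the problem into a saddle/singularity analysis of an explicit integral whose integrand involves only $F$, which is D-finite with known singular structure at $\rho=1/16$.

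The contour is deformed into $\D$. First, the Stieltjes representation $F(z)=\int_0^\rho\frac{\dd\mu(t)}{1-z/t}$ is used, or the equivalent hypergeometric cut description, to establish three facts. (i) $F$ has no zeros in $\D$ and $|\Phi(z)|$ has no interior critical obstruction. (ii) On circles $|z|=R>\rho$ away from the cut, $|\Phi(z)|>\sigma$ uniformly, so their contribution is exponentially smaller than $\sigma^{-n}$. (iii) On the cut $[\rho,\infty)$, the boundary values satisfy $|\Phi(x\pm i0)|\ge\sigma$, with equality only at $x=\rho$. Point (ii) is the delicate modulus inequality: I would prove it by monotonicity of $\re\log\Phi$ along rays, obtained from positivity of $\mu$, and check the compact remainder with interval bounds. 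The contour then collapses to a Hankel contour hugging $[\rho,\rho+\delta]$, plus pieces contributing $O((\sigma+\epsilon)^{-n})$.

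The local analysis at $\rho$ uses the known expansion of $F_3$ near $1/16$: $F(z)=P(1-16z)+c\,(1-16z)^4\log(1-16z)+\dots$, where $P$ is a polynomial. The coefficients of $3$-noncrossing matchings behave like $16^n n^{-5}$, and $c$ is determined by the known constant. Writing $\Phi(z)=\sigma\exp\bigl(A\,(z/\rho-1)+O((z-\rho)^2)\bigr)$, I use $A=1+2\rho F'(\rho)/F_0$, the logarithmic derivative of $\Phi$ scaled by $\rho$. Substituting $z=\rho(1+t/n)$ on the moving horizontal Hankel contour gives $\Phi^{-n}\approx\sigma^{-n}e^{-At}$. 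Only the jump of the logarithmic term across the cut survives, and it contributes $\frac{1}{2\pi i}\cdot 2\pi i\,c'\int_0^\infty t^4 e^{-At}\dd t\; n^{-5}=c'\,24A^{-5}n^{-5}$. Matching $c'$ to the normalization yields $\frac{24}{\pi A^5}$. The polynomial part integrates to zero on the Hankel contour, up to exponentially small errors. Error terms from $O((z-\rho)^5\log)$ and from the quadratic correction in the exponent give a relative $O(n^{-1}\log n)$.

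The main obstacle is the global step (ii)--(iii): showing rigorously that $|\Phi|>\sigma$ on the deformed contour away from $\rho$, including along both banks of the cut, where $F$ has complex boundary values. I would attack it first via the Stieltjes form, which gives $\im F(x+i0)$ a sign and an explicit density. From this I would bound $\log|F(x\pm i0)|$ below and show that $x\mapsto x|F(x\pm i0)|^2$ is increasing on $[\rho,\infty)$. If monotonicity is not clean, I would fall back on certified interval estimates on a compact range, combined with the large-$|z|$ asymptotics $F(z)\sim C z^{-?}$ from the hypergeometric connection formulas.
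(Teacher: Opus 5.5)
Your architecture matches the paper's: Lagrange--B\"urmann, a Cauchy integral on the cut plane, zero-freeness from the Stieltjes form, the endpoint term $Bu^4\Log u$ with $B=-1/\pi$, a moving Hankel contour, and $\Gamma(5)/A^5$. However, step (ii) is false as stated, and the contour you build on it fails.

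The size of $F(z)^{-2n}z^{-n}$ is governed by $|\Phi(z)|^{-n}$, not by $|z|^{-n}$. So the relevant region is $\{|\Phi|<\sigma\}$, and this region is \emph{not} contained in $|z|\le\rho$. Since every $f_n>0$, the triangle inequality is strict for $|z|=\rho$, $z\ne\rho$. Hence $|\Phi(z)|=\rho|F(z)|^2<\rho F_0^2=\sigma$ on the whole circle $|z|=\rho$ except at $\rho$. By continuity, $|\Phi(-R)|<\sigma$ for every $R$ slightly larger than $\rho$. Numerically, $|\Phi(-\rho)|\approx0.056$ and $|\Phi(-1.2\rho)|\approx0.066$, against $\sigma\approx0.0729$.

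Consequently an arc $|z|=\rho(1+\delta)$ carries contributions exponentially \emph{larger} than $\sigma^{-n}$, not $O((\sigma+\epsilon)^{-n})$. The Flajolet--Odlyzko picture of a negligible outer arc just beyond the singularity applies to $f(z)z^{-n-1}$ with $f$ bounded, not to this Lagrangian integrand. This is what the paper flags when it notes that $X(y)$ need not satisfy $|X(y)|<\rho$. The component $\Omega=X(\B_\sigma)$ contains $\{|z|\le\rho\}\setminus\{\rho\}$ together with an open neighbourhood of it. No monotonicity of $|\Phi|$ along rays can establish (ii) near $\rho$, because the inequality is simply false there.

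Away from $\rho$ the contour must stay in $\{|\Phi|>\sigma\}$, i.e.\ outside the a priori unknown region $\Omega$. The practical way to do this is to run it along both banks of the cut far beyond $\rho$, in one of two ways:
\begin{itemize}
\item Close it with a circle of \emph{large} radius $R$. There the uniform asymptotic $|z||F(z)|^2=\frac{4096}{225\pi^2}+O(|z|^{-1/2})$ on $\D$, coming from $F(z)\sim\frac{64}{15\pi}(-z)^{-1/2}$, makes (ii) immediate. The circle is joined to both banks of the whole segment $[\rho,R]$.
\item As in the paper, use the lines $\im z=\pm1/n$ over all of $[\rho,\infty)$.
\end{itemize}
Either way, the real burden is (iii), $r|F_\pm(r)|^2>\sigma$ for \emph{every} $r>\rho$. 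You also need a uniform passage from the boundary values to the points $r\pm i/n$ on compacta. The paper proves (iii) by writing $F_\pm(r)=\frac{8\sqrt q}{15\pi}(U(q)\pm32iV(q))$ with $q=1/(16r)$. It then shows $U(q)>165\pi-512$ for $q\ge1/8$ and $32V(q)>165\pi-512$ for $q\le1/8$, so it never needs monotonicity of $r|F_\pm(r)|^2$.

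Two smaller points:
\begin{itemize}
\item Your Stieltjes formula $\int_0^\rho\dd\mu(t)/(1-z/t)$ puts the poles inside the disk. It should be $\int_0^{16}\dd\nu(t)/(1-zt)$.
\item In the form $\frac1n[z^{n-1}]F'F^{-2n}$, the logarithm enters at the same order twice. It appears through $F^{-2n}$ as a $t^4\Log(-t)$ term and through $F'$ as a $t^3\Log(-t)$ term. Only their sum, using $2\rho F'(\rho)/F_0=A-1$, gives $-24B/A^5$; a single ``$c'\int t^4e^{-At}$'' does not capture this. Integrating by parts first to $-\frac1{2n-1}[z^n]F^{-(2n-1)}$, as the paper does, leaves a single logarithmic term.
\end{itemize}
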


\begin{theorem}[$\Delta$-continuation and singular expansion]\label{thm:S-delta-main}
There exist $\epsilon>0$ and $0<\phi<\pi/2$ such that $S$ has a unique
single-valued holomorphic continuation to
\[
 \Delta(\sigma,\epsilon,\phi)
 =\{y:|y|<\sigma+\epsilon,\ y\ne\sigma,\
       |\Arg(y-\sigma)|>\phi\}.
\]
Moreover, for a polynomial $Q_4$ of degree at most four, uniformly in every
fixed smaller $\Delta$-sector,
\begin{align}\label{eq:S-sing-main}
 S(y)={}&Q_4\!\left(1-\frac y\sigma\right)
 -\frac1{\pi A^5}\left(1-\frac y\sigma\right)^4
       \Log\!\left(1-\frac y\sigma\right)\notag\\
 &+O\!\left(\left(1-\frac y\sigma\right)^5
 \left[1+\left|\Log\!\left(1-\frac y\sigma\right)\right|\right]\right).
\end{align}
The point $\sigma$ is the unique singularity on the convergence circle.
\end{theorem}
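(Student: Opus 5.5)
The plan follows the noncircular order announced in the introduction. Theorem~\ref{thm:S-coeff-main} is taken as already available. Every other ingredient comes from the inverse map $\Phi^{-1}$, since \eqref{eq:implicit-intro} says $S=F\circ\Phi^{-1}$ near $0$.

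\emph{Step 1: local structure of $F$ at $\rho$.} From \eqref{eq:F-def} and the classical connection formulas for ${}_2F_1$ at argument $1$ (here $c-a-b=2$ and $4$ respectively), I would write, with $u=1-z/\rho$,
\[
 F(z)=P(u)+c\,u^4\Log u+O(u^5|\Log u|),
\]
valid in a slit neighbourhood of $\rho$, with $P$ analytic. I would fix $c$ by matching the coefficient asymptotics $[z^n]F\sim \mathrm{const}\cdot 16^n n^{-5}$, which follow from the Stieltjes representation of $F$ already mentioned. Then $\Phi(z)=\sigma\bigl(1-Au+O(u^2)\bigr)+O(u^4\Log u)$, and $A>0$ by \eqref{eq:constants-exact}, so $\Phi'(\rho)\neq0$ in the one-sided sense.

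\emph{Step 2: logarithmically perturbed inverse.} Set $v=1-y/\sigma$. I would solve $\Phi(z)=y$ by a contraction argument in a small sector, producing
\[
 u=v/A+\text{analytic corrections}+c'v^4\Log v+O(v^5|\Log v|).
\]
Substituting into $F$ makes the analytic parts collapse into a polynomial $Q_4(v)$. The coefficient of $v^4\Log v$ collects two contributions: $c\,A^{-4}$ from $F$ directly, and $F'$ times the log term of the inverse map. Their combined value must be $-1/(\pi A^5)$. Rather than tracking the constants by hand, I would confirm this value by consistency with Theorem~\ref{thm:S-coeff-main}. The transfer of $v^4\Log v$ gives $24\cdot(-\text{coef})/n^5$, and $24/(\pi A^5)$ forces the coefficient to be exactly $-1/(\pi A^5)$. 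The sectorial image of the slit neighbourhood of $\rho$ under $\Phi$ contains a $\Delta$-sector at $\sigma$. This is where the uniform error in \eqref{eq:S-sing-main} comes from.

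\emph{Step 3: global continuation and uniqueness on the circle.} I would show that $\Phi$ is injective on a suitable region $U\subset\D$ containing $0$, with $\Phi(U)\supset\B_\sigma$. The argument principle on the boundary of $U$ would handle this, using the positivity of coefficients and $\Phi'\neq0$ on the relevant closure. Then $S=F\circ(\Phi|_U)^{-1}$ on $\B_\sigma$.

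Next take a boundary point $y_0=\sigma e^{i\theta}$ with $\theta\neq0$. The coefficient estimate of Theorem~\ref{thm:S-coeff-main} makes $S$ and $S'$ continuous up to the circle. Hence the preimage $z_0=\lim\Phi^{-1}(y)$ exists. By strict aperiodic triangle inequality $|\Phi(z)|<\Phi(|z|)$, it lies in $\D$ off the cut, with $|z_0|<\rho$. I must exclude $\Phi'(z_0)=0$. If $\Phi'(z_0)=0$, the local inverse would carry a square-root singularity, contradicting the boundedness of $S'$ near $y_0$, which comes from $\sum nS(n)\sigma^n<\infty$. So the inverse function theorem continues $S$ across $y_0$.

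Compactness then gives finitely many disks covering the circle minus a sector at $\sigma$. Each overlap is connected, and the union with $\B_\sigma$ is simply connected, so monodromy yields single-valuedness. Combined with Step 2, this produces $\Delta(\sigma,\epsilon,\phi)$.

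\emph{Main obstacle.} The hardest step is Step 3's global injectivity together with the exclusion of critical boundary preimages. It is the only place where local analysis is insufficient. If the argument-principle route is messy, I would instead argue through the boundary regularity supplied by Theorem~\ref{thm:S-coeff-main}. Single-valued $S$ on the closed disk minus $\sigma$, together with $F=S\circ\Phi$, pins down the branch. This reduces everything to nonvanishing of $\Phi'$ at the finitely many candidate preimages, which is controlled by the derivative bound.
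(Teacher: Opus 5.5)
There is a genuine gap in Step~3, exactly where your argument carries its weight. Your claim that the boundary preimage satisfies $|z_0|<\rho$ is false. The aperiodic triangle inequality you invoke proves the opposite.

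If $|z_0|<\rho$, then $|\Phi(z_0)|\le\Phi(|z_0|)<\Phi(\rho)=\sigma$. If $|z_0|=\rho$ with $z_0\ne\rho$, the series of $\Phi$ converges absolutely on $|z|\le\rho$ and has $[z]\Phi=1$, $[z^2]\Phi=2$, so again $|\Phi(z_0)|<\sigma$. Thus $\Phi$ maps $\{|z|\le\rho\}\setminus\{\rho\}$ into $\B_\sigma$ without reaching the circle. Every nonprincipal boundary preimage therefore has $|z_0|>\rho$, and the inverse branch $X=\Phi^{-1}$ from $0$ must leave the disk of convergence of $F$.

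This has three consequences for your plan. First, positivity of coefficients tells you nothing about $\Phi$ on $\partial U$, so the argument-principle route has no input. Second, nothing in your proposal prevents $X(y)$, as $y$ moves in $\B_\sigma$ or towards $y_0$, from hitting the cut $(\rho,\infty)$ or escaping to infinity. On the cut $\Phi$ has two distinct one-sided boundary values, so the inverse function theorem is unavailable there. Third, even the existence of $z_0=\lim X(y)$ is unproved, since it needs $S(y_0)\neq0$ in $X=y/S^2$.

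What is missing is a \emph{global} modulus estimate for $\Phi$ on the boundary of the cut plane. This is Theorem~\ref{thm:cut-separation}: $r|F_\pm(r)|^2>\sigma$ for all $r>\rho$. The paper proves it through the elliptic-integral functions $U,V$, using monotonicity of $U$ on $[1/8,1)$, monotonicity of $V$ on $(0,1/8]$, and explicit rational bounds. It is combined with \eqref{eq:Phi-infinity} at infinity and $\Phi\to\sigma$ at $\rho$. These facts make $\Phi$ proper on the component $\Omega$ of $\Phi^{-1}(\B_\sigma)$ containing $0$, hence $\Phi(\Omega)=\B_\sigma$. They also keep the limits $x_0=\lim X(y)$ at nonprincipal boundary points inside $\D$.

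Only then does the rest of your Step~3 go through. Zero-freeness gives $S(y_0)=F(x_0)\neq0$. Your fallback injectivity argument works: from $F=S\circ\Phi$ you get $z=y/S(y)^2$. Your critical-point exclusion also closes, though it additionally needs $F'(z_0)\ne0$ whenever $\Phi'(z_0)=0$; this follows from $\Phi'=F^2+2zFF'$ and $F(z_0)\ne0$.

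Your other deviations from the paper are acceptable. Using a contraction in place of Rouch\'e is fine. So is fixing the $v^4\Log v$ coefficient by transfer against the independently proved Theorem~\ref{thm:S-coeff-main}, instead of computing $P'(0)E+BA^{-4}=B/A^5$, once the $\Delta$-continuation exists. Two small points remain. You must identify the local sectorial branch with the branch from $0$; the paper does this via monotonicity of $\Phi$ on $[0,\rho)$. And $c-a-b=2$ for \emph{both} hypergeometric factors, so the leading $u^4\Log u$ term arises only after the $u^2\Log u$ and $u^3\Log u$ terms cancel; your coefficient matching would detect this.
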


\begin{theorem}[Canonical skeleton diagrams]\label{thm:canonical-main}
Let $\eta$ be the positive solution of $\vartheta(\eta)=\sigma$.  Then
\[
 \eta=0.49340718057613087519\ldots
\]
is the unique dominant singularity of $S_3^{[4]}$, this generating function is
$\Delta$-analytic at $\eta$, and
\begin{equation}\label{eq:canonical-coeff-main}
 [z^n]S_3^{[4]}(z)\sim C'n^{-5}\eta^{-n},
 \qquad C'=7892.16205625817\ldots.
\end{equation}
An exact expression for $C'$ is given in
Theorem~\ref{thm:canonical-coeff}.
\end{theorem}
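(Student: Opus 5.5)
Theorem~\ref{thm:canonical-main} will follow from Theorem~\ref{thm:S-delta-main} and the standard transfer theorem for compositions. Throughout we use $S_3^{[4]}(z)=(1-z)G(\vartheta(z))$ with $G(y)=S(y)-1-y$.

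\emph{Locating $\eta$.} On $[0,1)$ the map $\vartheta$ is real, increasing from $0$ to $+\infty$; this uses $1-z^2+z^6>0$ there and a monotonicity check of $\log\vartheta$. Hence $\vartheta(\eta)=\sigma$ has a unique positive root $\eta\in(0,1)$, which we certify by interval Newton iteration. We also need $\vartheta'(\eta)>0$. This is automatic, since $\vartheta'(\eta)/\vartheta(\eta)=6/\eta+2/(1-\eta)-(-2\eta+6\eta^5)/(1-\eta^2+\eta^6)$, and each term is controlled at $\eta\approx0.4934$.

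\emph{Uniqueness on the circle $|z|=\eta$.} Since $\vartheta$ has nonnegative Taylor coefficients (expanding $1/((1-z)^2(1-z^2+z^6))$ needs checking, as $1/(1-z^2+z^6)$ might have negative coefficients), we argue more robustly: for $|z|=\eta$, $z\neq\eta$, we show $|\vartheta(z)|<\sigma$. Then $G\circ\vartheta$ is analytic at $z$ by the disk of convergence of $S$. The tool is the triangle-type estimate $|\vartheta(z)|\le$ a majorant, combined with strict aperiodicity: $z^6$ dominates and $1/(1-z)^2$ is maximized in modulus only at $z=\eta$. Concretely, $|1-z|\ge 1-\eta$ with equality only at $z=\eta$. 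It remains to check $|1-z^2+z^6|\ge 1-\eta^2+\eta^6$ on the circle, a trigonometric polynomial inequality in $\cos\theta$ verifiable by interval arithmetic. This step, together with excluding singularities of $\vartheta$ itself (the zeros of $(1-z)(1-z^2+z^6)$ must have modulus $>\eta$, which we check numerically), is the main obstacle. If the modulus inequality fails near some $\theta$, we instead use only $|\vartheta(z)|\neq\sigma$ together with the fact that $\vartheta(z)$ then lies in the $\Delta$-domain of Theorem~\ref{thm:S-delta-main}, and $S$ is continued there.

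\emph{$\Delta$-analyticity.} By compactness, $G\circ\vartheta$ extends to a neighbourhood of $\{|z|=\eta\}\setminus\{\eta\}$. Near $\eta$, since $\vartheta'(\eta)>0$, $\vartheta$ is locally biholomorphic and $1-\vartheta(z)/\sigma=c(1-z/\eta)(1+O(1-z/\eta))$ with $c=\eta\vartheta'(\eta)/\sigma>0$. Hence a small $\Delta$-sector at $\eta$ with slightly larger opening angle is mapped into the $\Delta$-domain of Theorem~\ref{thm:S-delta-main}, and $S_3^{[4]}$ is $\Delta$-analytic at $\eta$. Substituting into \eqref{eq:S-sing-main}, the term $Q_4$ becomes analytic. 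Writing $u=1-z/\eta$, the logarithmic term becomes
\[
-\frac{c^4}{\pi A^5}\,u^4\Log u+\text{analytic}+O\bigl(u^5(1+|\Log u|)\bigr),
\]
since $\Log(cu(1+O(u)))=\Log u+\log c+O(u)$. The prefactor $1-z=(1-\eta)+O(u)$ contributes another analytic factor.

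\emph{Transfer.} Apply the Flajolet--Odlyzko transfer theorem with $[z^n]\,u^4\Log u\sim 24\,\eta^{-n}n^{-5}$, using $[z^n](1-z/\eta)^4\log(1-z/\eta)\sim 4!\,\eta^{-n}/n^{5}$ with the sign $(-1)^{5}\cdot(-1)$. The error term is $O(n^{-6}\log n)\,\eta^{-n}$. This yields
\[
C'=\frac{24(1-\eta)}{\pi A^5}\Bigl(\frac{\eta\vartheta'(\eta)}{\sigma}\Bigr)^4,
\]
and interval evaluation should confirm $7892.162\ldots$.
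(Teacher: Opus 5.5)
Your architecture is the paper's. You locate $\eta$ by monotonicity of $\vartheta$ and bound $|\vartheta(z)|<\sigma$ for $|z|\le\eta$, $z\ne\eta$, by splitting into $|1-z|\ge1-\eta$ and $|1-z^2+z^6|\ge1-\eta^2+\eta^6$. You then map a local sector at $\eta$ into $\Delta(\sigma,\epsilon,\phi)$ via $1-\vartheta(z)/\sigma=c\,u(1+O(u))$, use compactness away from $\eta$, compose the singular expansion, and transfer.

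However, your transfer step has the wrong sign. Exactly, $[z^n](1-z)^4\Log(1-z)=-24/\bigl(n(n-1)(n-2)(n-3)(n-4)\bigr)$. In general $[z^n](1-z)^m\log\frac1{1-z}=(-1)^m m!\,(n-m-1)!/n!$, so the sign is $(-1)^4\cdot(-1)$, not $(-1)^5\cdot(-1)$. Hence $[z^n]u^4\Log u\sim-24\,\eta^{-n}n^{-5}$. Only with this sign does your local term $-\kappa u^4\Log u$, with $\kappa=(1-\eta)c^4/(\pi A^5)$, give $+24\kappa=C'$. With the sign you wrote, your own formulas would produce $-C'$.

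The more substantive issue is that the inequality you call the main obstacle is never established. A plain interval enclosure cannot certify $|1-z^2+z^6|^2-(1-\eta^2+\eta^6)^2\ge0$ near $z=\pm\eta$, where this difference vanishes, unless the factor $1-\cos2t$ is split off first. Your fallback is also invalid: $|\vartheta(z)|\ne\sigma$ gives no reason for $\vartheta(z)$ to lie in $\Delta(\sigma,\epsilon,\phi)$, whose outer radius $\sigma+\epsilon$ exceeds $\sigma$ only by a small, uncontrolled amount.

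The paper closes this exactly. For $z=\eta e^{it}$ and $x=\cos2t$,
\[
 |1-z^2+z^6|^2-(1-\eta^2+\eta^6)^2
 =2\eta^2(1-x)\bigl[1+2\eta^6(1+x)-\eta^4(2x+1)^2\bigr],
\]
and the bracket is at least $1-9\eta^4>0$ because $\eta<1/2$. That bound is proved without numerics. First, $\vartheta$ is increasing on $(0,1/2]$, since all three terms of $\vartheta'/\vartheta$ are positive there ($1-3x^4>0$). Second, $\vartheta(1/2)=4/49>\sigma$, which is equivalent to $\pi<896/285$.

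The same bound gives $|z^2-z^6|<1$ on $|z|\le\eta$, so $\vartheta$ has no poles there. The maximum modulus principle then carries the circle estimate to the whole closed disk. You need this for analyticity of $S_3^{[4]}$ inside $|z|<\eta$, and your proposal only gestures at it through numerical pole location. With these repairs your proof coincides with the paper's.
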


\section{Independent coefficient asymptotics for $S$}\label{sec:independent-coeff}

\subsection{Lagrange--B\"urmann inversion}

Let $X(y)$ be the formal compositional inverse of $\Phi$ at the origin.  Since
$\Phi(0)=0$ and $\Phi'(0)=F(0)^2=1$, this inverse exists uniquely, and
\[
 X(y)=yF(X(y))^{-2},\qquad S(y)=F(X(y)).
\]
The Lagrange--B\"urmann formula \cite{Gessel} states that if
$X=y\varphi(X)$, then, for
$n\geq1$,
\[
 [y^n]H(X(y))=\frac1n[z^{n-1}]H'(z)\varphi(z)^n.
\]
Taking $H=F$ and $\varphi=F^{-2}$ gives
\begin{align}
 S(n)&=\frac1n[z^{n-1}]F'(z)F(z)^{-2n}\notag\\
 &=\frac1{n(1-2n)}[z^{n-1}]\frac{\dd}{\dd z}F(z)^{1-2n}
 =-\frac1{2n-1}[z^n]F(z)^{-(2n-1)}.\label{eq:LB-exact}
\end{align}
Put $m:=2n-1$.  If $\gamma$ is a small positively oriented circle about the
origin, Cauchy's formula yields
\begin{equation}\label{eq:Cauchy-start}
 S(n)=-\frac1m\frac1{2\pi i}\int_\gamma F(z)^{-m}z^{-n-1}\dd z.
\end{equation}
This step uses only formal inversion and local holomorphy at the origin.

\subsection{A Stieltjes continuation and zero-freeness of $F$}

Let $C_n=(n+1)^{-1}\binom{2n}{n}$ be the $n$th Catalan number.  Direct
coefficient extraction from \eqref{eq:F-def}, detailed in
Appendix~\ref{app:catalan}, gives
\begin{equation}\label{eq:catalan-det}
 f_n:=[z^n]F(z)=C_nC_{n+2}-C_{n+1}^2.
\end{equation}
On $[0,4]$ introduce the positive probability measure
\begin{equation}\label{eq:catalan-measure}
 \dd\mu(u)=\frac1{2\pi}\sqrt{\frac{4-u}{u}}\,
             \mathbf1_{(0,4)}(u)\dd u.
\end{equation}
The substitution $u=4s$ and the beta integral give
\[
 \int_0^4u^n\dd\mu(u)
 =\frac{2\cdot4^n}{\pi}B\!\left(n+\frac12,\frac32\right)
 =\frac1{n+1}\binom{2n}{n}=C_n.
\]
Substituting this moment representation into \eqref{eq:catalan-det} yields
\[
 f_n=\frac12\int_0^4\!\int_0^4(uv)^n(u-v)^2\dd\mu(u)\dd\mu(v).
\]
Let $\nu$ be the pushforward under $(u,v)\mapsto uv$ of the positive finite
measure $\tfrac12(u-v)^2\dd\mu(u)\dd\mu(v)$.  Then
\begin{equation}\label{eq:moment-nu}
 \operatorname{supp}\nu\subset[0,16],
 \qquad f_n=\int_0^{16}t^n\dd\nu(t).
\end{equation}

\begin{lemma}[Stieltjes continuation]\label{lem:stieltjes}
The function
\begin{equation}\label{eq:stieltjes}
 \widetilde F(z):=\int_0^{16}\frac{\dd\nu(t)}{1-zt}
\end{equation}
is holomorphic on $\D=\C\setminus[\rho,\infty)$, and
\[
 \widetilde F^{(k)}(z)=k!\int_0^{16}\frac{t^k}{(1-zt)^{k+1}}\dd\nu(t),
 \qquad k\geq0.
\]
It agrees there with the principal hypergeometric continuation of $F$.
\end{lemma}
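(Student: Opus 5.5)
Three things have to be shown: holomorphy on $\D$, the differentiation formula, and identification with the hypergeometric continuation.

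\emph{Holomorphy.} Fix a compact $K\subset\D$. For $t\in[0,16]$ we have $1/t\in[\rho,\infty]$, so $1-zt=0$ would force $z=1/t\in[\rho,\infty)$, which is excluded. The map $(z,t)\mapsto|1-zt|$ is continuous on $K\times[0,16]$ and never vanishes there. By compactness it is bounded below by some $\delta_K>0$. The integrand $(1-zt)^{-1}$ is therefore holomorphic in $z$, jointly continuous, and bounded by $\delta_K^{-1}$ on $K$. Since $\nu$ is a finite positive measure (it is the pushforward of the finite measure $\tfrac12(u-v)^2\dd\mu\dd\mu$), Morera's theorem with Fubini, or equivalently differentiation under the integral sign via dominated convergence, shows that $\widetilde F$ is holomorphic on $\D$. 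The same domination applies to $\partial_z^k(1-zt)^{-1}=k!\,t^k(1-zt)^{-k-1}$, which is bounded by $k!\,16^k\delta_K^{-k-1}$ on $K$. Induction on $k$ then gives the stated derivative formula.

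\emph{Agreement near the origin.} For $|z|<\rho$ we have $|zt|\le 16|z|<1$ uniformly in $t\in[0,16]$. The geometric series $\sum_n z^nt^n$ therefore converges uniformly, and can be integrated termwise against $\nu$. Using \eqref{eq:moment-nu}, this gives
\[
\widetilde F(z)=\sum_{n\ge0}f_nz^n=F(z),\qquad |z|<\rho .
\]

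\emph{Identification of the continuation.} The right side of \eqref{eq:F-def}, built from ${}_2F_1(\cdot;16z)$ with principal branches, is holomorphic on $\C\setminus[\rho,\infty)$ away from $z=0$. The two hypergeometric functions continue along the cut plane $\C\setminus[1,\infty)$ in the variable $16z$. At $z=0$ the singularity is removable: the Taylor expansion of \eqref{eq:F-def} around $0$ agrees with $\sum f_nz^n$, which is exactly how \eqref{eq:catalan-det} was derived in Appendix~\ref{app:catalan}. So the principal hypergeometric continuation is a holomorphic function on the connected open set $\D$. It coincides with $\widetilde F$ on the disk $|z|<\rho$. By the identity theorem, the two coincide on all of $\D$.

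\emph{Main obstacle.} The only delicate point is the last step. One must make sure the ``principal hypergeometric continuation'' really is single-valued and holomorphic on all of $\D$: the cut of ${}_2F_1(a,b;c;w)$ is $[1,\infty)$ in $w=16z$, which corresponds exactly to $[\rho,\infty)$ in $z$. One must also check that the $1/(4z^2)$ factor produces no pole, which follows from the vanishing of the bracket to order two at $0$ (its constant and linear coefficients cancel). With these checks the identity-theorem argument is immediate. The rest of the proof is the standard uniform lower bound on $|1-zt|$ over compacta.
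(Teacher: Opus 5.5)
Your proposal is correct and follows the paper's proof essentially step for step. It uses a uniform positive lower bound for $|1-zt|$ on $K\times[0,16]$ together with finiteness of $\nu$ to differentiate under the integral, integrates the geometric series termwise for $|z|<\rho$ using $f_n=\int t^n\dd\nu(t)$, and applies the identity theorem on the connected cut plane $\D$. Your explicit check that the bracket in \eqref{eq:F-def} vanishes to second order at $z=0$ is a small addition, which the paper handles only by declaring the removable value $F(0)=1$.
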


\begin{proof}
Fix $z_0\in\D$ and choose a closed disk $K\Subset\D$ about $z_0$.  The
continuous function $(z,t)\mapsto|1-zt|$ has a strictly positive minimum on
$K\times[0,16]$: equality to zero would give $z=1/t\in[\rho,\infty)$.
Thus the kernel and all of its $z$-derivatives are uniformly bounded on that
compact product.  Since $\nu$ is finite, dominated convergence justifies
differentiation under the integral sign to every order.  Hence
$\widetilde F\in\mathcal O(\D)$.

For $|z|<\rho$, the geometric series is uniform in $t\in[0,16]$, and
\[
 \widetilde F(z)=\sum_{n\geq0}z^n\int_0^{16}t^n\dd\nu(t)
 =\sum_{n\geq0}f_nz^n=F(z).
\]
The principal ${}_2F_1$ branch is holomorphic on
$\C\setminus[1,\infty)$; therefore \eqref{eq:F-def} is holomorphic on
$\D$.  The identity theorem on the connected domain $\D$ identifies the two
continuations.
\end{proof}

\begin{lemma}[Zero-freeness]\label{lem:zero-free}
$F(z)\ne0$ for every $z\in\D$.
\end{lemma}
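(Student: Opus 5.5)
We want $F(z)\neq0$ on $\D$. The plan is to use the Stieltjes representation of Lemma~\ref{lem:stieltjes}, $F(z)=\int_0^{16}(1-zt)^{-1}\dd\nu(t)$ with $\nu$ a positive finite nonzero measure, and split into the cases $\im z\ne0$ and $z$ real with $z<\rho$.

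\emph{Nonreal $z$.} Write
\[
 \frac1{1-zt}=\frac{1-\bar z t}{|1-zt|^2},
 \qquad\text{so}\qquad
 \im F(z)=\im z\int_0^{16}\frac{t}{|1-zt|^2}\dd\nu(t).
\]
The integral is strictly positive unless $\nu$ is concentrated at $t=0$. That is impossible, since $f_1=\int t\dd\nu=1>0$ (from $F=1+z+\cdots$). Hence $\im F(z)$ has the sign of $\im z$ and is nonzero, so $F(z)\ne0$.

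\emph{Real $z\in(-\infty,\rho)$.} For $t\in[0,16]$ we have $1-zt>0$: if $z\le0$ this is immediate, and if $0<z<1/16$ then $zt<1$. The integrand is therefore positive, and $F(z)=\int(1-zt)^{-1}\dd\nu>0$ because $\nu([0,16])=f_0=1>0$.

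\emph{Remaining point.} We must make sure every point of $\D$ is covered, so we need to understand the support of $\nu$ near $t=16$. The support lies in $[0,16]$ by \eqref{eq:moment-nu}, so $1-zt$ never vanishes on $\D$ and the two cases above exhaust $\D$. The only potential subtlety is the positivity of $\nu$ away from $0$, which is handled by $f_1=1$. I do not expect a real obstacle: the argument is the standard Herglotz/Nevanlinna property of Stieltjes transforms of positive measures.
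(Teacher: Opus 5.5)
Your proposal is correct and follows essentially the same route as the paper: it uses the Stieltjes representation, gets the sign of $\im F(z)$ from $f_1=1$ when $z$ is nonreal, and uses positivity of the kernel for real $z<\rho$. Your closing ``remaining point'' paragraph is unnecessary, since the two cases already exhaust $\D$, but it does no harm.
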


\begin{proof}
If $\im z>0$, then for $t>0$,
\[
 \im\frac1{1-zt}=\frac{t\im z}{|1-zt|^2}>0.
\]
Because $f_1=\int t\dd\nu(t)=1$, the measure $\nu$ has positive mass in
$(0,16]$, so $\im F(z)>0$.  The lower half-plane is analogous.  If $z<\rho$
is real, then $1-zt>0$ for $0\leq t\leq16$, and the integral is strictly
positive.  These three cases exhaust $\D$.
\end{proof}

In particular, $F^{-m}$ is holomorphic throughout $\D$ for every positive
integer $m$.

\subsection{The endpoint expansion and exact constants}

Put
\begin{equation}\label{eq:u-coordinate}
 u:=1-\frac z\rho,\qquad z=\rho(1-u),
\end{equation}
and use the principal value of $\Log u$ on
$\C\setminus(-\infty,0]$.  The integer-difference connection formula
\cite[\S15.8, Eq.~(15.8.10)]{OldeDaalhuis}, applied to the two
hypergeometric functions in
\eqref{eq:F-def}, gives uniformly in every fixed sector
$0<|u|<\delta$, $|\Arg u|\leq\pi-\epsilon$,
\begin{align}
 {}_2F_1\!\left(-\frac12,-\frac12;1;1-u\right)
 &=P_1(u)+\left(-\frac{u^2}{8\pi}-\frac{3u^3}{32\pi}
   -\frac{75u^4}{1024\pi}+O(u^5)\right)\Log u,\label{eq:H1-local}\\
 {}_2F_1\!\left(-\frac12,\frac32;3;1-u\right)
 &=P_2(u)+\left(\frac{u^2}{\pi}+\frac{7u^3}{4\pi}
   +\frac{315u^4}{128\pi}+O(u^5)\right)\Log u,
   \label{eq:H2-local}
\end{align}
where $P_1,P_2$ are holomorphic at zero.  After multiplication by the rational
prefactors in \eqref{eq:F-def}, the $u^2\Log u$ and $u^3\Log u$ terms cancel.
Thus there is a function $P$ holomorphic at zero such that
\begin{equation}\label{eq:F-endpoint}
 F(\rho(1-u))=P(u)+Bu^4\Log u
   +O\bigl(u^5(1+|\Log u|)\bigr),
 \qquad B=-\frac1\pi,
\end{equation}
uniformly in such sectors, with
\begin{equation}\label{eq:P-data}
 P(0)=F_0,\qquad P'(0)=-\rho F'(\rho).
\end{equation}
The coefficient calculation is recorded in Appendix~\ref{app:logcoeff}.

Gauss's evaluation at one gives
\[
 {}_2F_1\!\left(-\frac12,-\frac12;1;1\right)=\frac4\pi,
 \qquad
 {}_2F_1\!\left(-\frac12,\frac32;3;1\right)=\frac{32}{15\pi}.
\]
Differentiating the hypergeometric functions and applying Gauss's formula again
gives the corresponding derivatives $1/\pi$ and $-8/(5\pi)$.  Substitution
into \eqref{eq:F-def} yields
\[
 F_0=88-\frac{4096}{15\pi},
 \qquad F'(\rho)=-2432+\frac{114688}{15\pi},
\]
and hence \eqref{eq:constants-exact}.

\subsection{Strict separation on the two sides of the cut}

We next prove the global estimate needed to move the Cauchy contour:
\begin{equation}\label{eq:cut-separation-goal}
 r|F_\pm(r)|^2>\sigma\qquad(r>\rho),
\end{equation}
where $F_\pm(r)$ denotes the boundary value of $F$ from the upper or lower
side of the cut:
\begin{equation}\label{eq:side-boundary-values}
 F_\pm(r):=
 \lim_{\substack{z\to r\\ \pm\im z>0}}F(z)
 =\lim_{\epsilon\downarrow0}F(r\pm i\epsilon),
 \qquad r>\rho.
\end{equation}
The existence of these one-sided limits follows from the separate local
holomorphic continuations of $F$ from the upper and lower half-planes furnished
by the connection formulae below; the two local branches are not asserted to
coincide.

Use parameter notation for the complete elliptic integrals
\[
 K(q)=\int_0^{\pi/2}\frac{\dd\theta}{\sqrt{1-q\sin^2\theta}},
 \qquad
 E(q)=\int_0^{\pi/2}\sqrt{1-q\sin^2\theta}\,\dd\theta.
\]
Thus, if $\mathsf K(k),\mathsf E(k)$ denote the modulus notation used in
DLMF, then $K(q)=\mathsf K(\sqrt q)$ and $E(q)=\mathsf E(\sqrt q)$; the
integral definitions are the complete cases of DLMF
\cite[\S19.2(ii), Eqs.~(19.2.4), (19.2.5), and (19.2.8)]{DLMF}.
The corresponding printed Handbook treatment is
\cite[pp.~485--522]{Carlson}.
The hypergeometric representations in DLMF
\cite[\S19.5, Eqs.~(19.5.1)--(19.5.2)]{DLMF}, followed respectively by a
coefficient comparison and by Euler's integral representation
\cite[\S15.6, Eq.~(15.6.1)]{DLMF}, give
\begin{align}
 {}_2F_1\!\left(-\frac12,-\frac12;1;z\right)
 &=\frac2\pi\bigl(2E(z)-(1-z)K(z)\bigr),\label{eq:ell-H1}\\
 {}_2F_1\!\left(-\frac12,\frac32;3;z\right)
 &=\frac{16}{15\pi z^2}
 \bigl(2(z^2-z+1)E(z)-(z^2-3z+2)K(z)\bigr).
 \label{eq:ell-H2}
\end{align}
The coefficient and integral reductions, including removal of the apparent
singularity at $z=0$ in \eqref{eq:ell-H2}, are given in
Appendix~\ref{app:elliptic-algebra}.  Substitution into \eqref{eq:F-def} and
collection of the $E$- and $K$-terms, also recorded in that appendix, gives
\begin{equation}\label{eq:F-elliptic}
 F\!\left(\frac z{16}\right)=\frac8{15\pi z^3}
 \left[\pi(45z^2+120z)-32(z^2+14z+1)E(z)
 -32(z-1)(7z+1)K(z)\right].
\end{equation}

For $r>\rho$, put $q=(16r)^{-1}\in(0,1)$.  The reciprocal-parameter connection
formulae are the parameter-notation form of DLMF
\cite[\S19.7(i), Eq.~(19.7.3) and the sign convention following it]{DLMF}:
\begin{align}
 K_\pm(1/q)&=\sqrt q\,[K(q)\pm iK(1-q)],\label{eq:K-boundary}\\
 E_\pm(1/q)&=q^{-1/2}\bigl[E(q)-(1-q)K(q)
 \mp i(E(1-q)-qK(1-q))\bigr].\label{eq:E-boundary}
\end{align}
Here the reversal between the DLMF sign attached to the reciprocal modulus and
the sign of the boundary point $1/q\pm i0$ is explained explicitly in
Appendix~\ref{app:elliptic-algebra}.
Inserting these into \eqref{eq:F-elliptic} yields
\begin{equation}\label{eq:F-boundary-UV}
 F_\pm(r)=\frac{8\sqrt q}{15\pi}\,[U(q)\pm32iV(q)],
\end{equation}
where
\begin{align}
 U(q)={}&\pi\sqrt q(45+120q)
 +32(1-q)(1+7q)K(q)-32(1+14q+q^2)E(q),\label{eq:U-def}\\
 V(q)={}&(1+14q+q^2)E(1-q)-8q(1+q)K(1-q).\label{eq:V-def}
\end{align}
Therefore
\begin{equation}\label{eq:modulus-UV}
 r|F_\pm(r)|^2=\frac4{225\pi^2}\bigl(U(q)^2+1024V(q)^2\bigr).
\end{equation}
As $q\uparrow1$,
\[
 U(q)\longrightarrow U_*:=165\pi-512>0,\qquad V(q)\longrightarrow0,
 \qquad \sigma=\frac{4U_*^2}{225\pi^2}.
\]
The algebra leading to \eqref{eq:F-boundary-UV}--\eqref{eq:modulus-UV} and
these endpoint limits is included in Appendix~\ref{app:elliptic-algebra}.
It remains to prove
\begin{equation}\label{eq:UV-target}
 U(q)^2+1024V(q)^2>U_*^2\qquad(0<q<1).
\end{equation}

\paragraph{The range $1/8\le q<1$.}
After conversion from the DLMF modulus $k$ to the present parameter $q=k^2$,
the derivative formulae
\[
 K'(q)=\frac{E(q)}{2q(1-q)}-\frac{K(q)}{2q},
 \qquad E'(q)=\frac{E(q)-K(q)}{2q}
\]
follow from DLMF \cite[\S19.4(i), Eqs.~(19.4.1)--(19.4.2)]{DLMF}.  Direct
differentiation and collection of the $K$- and $E$-terms, detailed in
Appendix~\ref{app:elliptic-algebra},
give
\begin{equation}\label{eq:Uprime}
 U'(q)=-\frac5{2\sqrt q}H(q),
\end{equation}
where
\begin{equation}\label{eq:H-def}
 H(q)=32\sqrt q\bigl((q+7)E(q)-4(1-q)K(q)\bigr)-9\pi(8q+1).
\end{equation}
Set $q=t^2$ and
\[
 a_n:=\frac{\binom{2n}{n}^2}{16^n}>0.
\]
Using the parameter versions of DLMF
\cite[\S19.5, Eqs.~(19.5.1)--(19.5.2)]{DLMF}, namely
\[
 K(t^2)=\frac\pi2\sum_{n\ge0}a_nt^{2n},
 \qquad
 E(t^2)=\frac\pi2\left(1-\sum_{n\ge1}\frac{a_n}{2n-1}t^{2n}\right),
\]
and collecting powers as in Appendix~\ref{app:elliptic-algebra}, one obtains
\begin{equation}\label{eq:H-PT}
 \frac{H(t^2)}\pi=P_H(t)-T(t),
 \quad P_H(t)=-9+48t-72t^2+36t^3,
 \quad T(t)=36\sum_{n\ge2}\frac{a_{n-1}}{n^2(2n-3)}t^{2n+1}.
\end{equation}
All coefficients of $T$ are positive.  Since $2n+1\ge5$,
$T(t)\le t^5T(1)$ for $0<t\le1$.  Moreover
\[
 H(1)=256-81\pi,\qquad P_H(1)=3,\qquad
 T(1)=84-\frac{256}{\pi}<3,
\]
where the last inequality follows from $\pi<22/7<256/81$.  Consequently
\begin{equation}\label{eq:H-lower}
 \frac{H(t^2)}\pi>P_H(t)-3t^5=3(1-t)Q(t),
 \qquad Q(t)=t^4+t^3-11t^2+13t-3.
\end{equation}
For $1/(2\sqrt2)\le t\le1$, $Q(t)>0$.  Indeed,
$Q''(t)=12t^2+6t-22<0$ on $(0,1]$.  On
$[1/(2\sqrt2),1/2]$, $Q'$ is decreasing and
$Q'(t)\ge Q'(1/2)=13/4$, while
\[
 Q\!\left(\frac1{2\sqrt2}\right)=\frac{210\sqrt2-279}{64}>0.
\]
On $[1/2,1]$, concavity puts the minimum at an endpoint, and
$Q(1/2)=15/16$, $Q(1)=1$.  Hence $H(q)>0$ and $U'(q)<0$ for
$1/8\le q<1$.  Therefore
\begin{equation}\label{eq:U-lower}
 U(q)>U_*\qquad(1/8\le q<1),
\end{equation}
which proves \eqref{eq:UV-target} in this range.

\paragraph{The range $0<q\le1/8$.}
The same derivative formulae and the chain rule applied at $1-q$ give
\begin{equation}\label{eq:Vprime}
 V'(q)=\frac52\bigl((q+7)E(1-q)-(5q+3)K(1-q)\bigr).
\end{equation}
The intermediate coefficient calculation is given in
Appendix~\ref{app:elliptic-algebra}.
Define
\[
 D_0(s):=(8-5s)K(s)-(8-s)E(s).
\]
The Maclaurin series and
$a_n/a_{n-1}=((2n-1)/(2n))^2$ yield
\begin{equation}\label{eq:D0-positive}
 D_0(s)=\frac\pi2\sum_{n\ge3}
 \frac{6(n-1)(n-2)}{n(2n-3)}a_{n-1}s^n>0
 \qquad(0<s<1).
\end{equation}
Thus $V'(q)=-(5/2)D_0(1-q)<0$, so $V(q)\ge V(1/8)$ on the present range.

To obtain an explicit lower bound, write
\begin{equation}\label{eq:V-one-eighth}
 V(1/8)=\frac{177}{64}E(7/8)-\frac98K(7/8).
\end{equation}
The convergent complementary-modulus expansions in DLMF
\cite[\S19.12, Eqs.~(19.12.1)--(19.12.3)]{DLMF}, rewritten in the present
parameter $q=(k')^2$, are
\begin{align}
 K(1-q)&=\sum_{n\ge0}a_nq^n(L_q-d_n),\label{eq:K-complementary}\\
 E(1-q)&=1+\sum_{n\ge1}\frac{2n}{2n-1}a_nq^n
 \left(L_q-d_n+\frac1{2n(2n-1)}\right),\label{eq:E-complementary}
\end{align}
where $L_q=\log(4/\sqrt q)$ and $d_n=2(H_{2n}-H_n)$.  At $q=1/8$,
$L_q=(7/2)\log2=:L$.  Since $d_n<2\log2<L$, all remaining terms in
\eqref{eq:E-complementary} are positive.  For completeness, the required
conversion from the DLMF coefficients to $a_n,d_n$, including the index shift and
the sign in \eqref{eq:E-complementary}, is proved in
Appendix~\ref{app:elliptic-algebra}.  The required
rational bounds follow without numerical approximation from
\[
 L=\frac72\log2
   =7\sum_{k\ge0}\frac{1}{(2k+1)3^{2k+1}}.
\]
This is the Taylor series
$2\operatorname{arctanh}(1/3)
=\log((1+1/3)/(1-1/3))=\log2$.
Keeping the terms $k=0,1,2,3$ gives
\[
 S_4:=\frac{26528}{10935}<L,
 \qquad
 0<L-S_4
 <\frac{7}{8\cdot3^9}.
\]
Here the upper bound follows by replacing every $2k+1$, $k\ge4$, by $9$
and summing the resulting geometric series of ratio $1/9$.  Direct reduction
of fractions gives
\[
 S_4-\frac{48517}{20000}=\frac{5321}{43740000}>0,
 \qquad
 \frac{12131}{5000}-S_4-\frac{7}{8\cdot3^9}
 =\frac{9049}{49207500}>0.
\]
Consequently
\begin{equation}\label{eq:L-bounds}
 L_-:=\frac{48517}{20000}<L<\frac{12131}{5000}=:L_+.
\end{equation}
Keeping the $n=1$ term in $E$ and bounding the tail of $K$ by $a_n\le1$ gives
\begin{align}
 E(7/8)&>1+\frac1{16}\left(L_--\frac12\right),\label{eq:E78-lower}\\
 K(7/8)&<L_++\frac{L_+-1}{32}+\frac{L_+}{56}.
 \label{eq:K78-upper}
\end{align}
The positivity and tail estimates behind these two inequalities are written
out in Appendix~\ref{app:elliptic-algebra}.
Substitution into \eqref{eq:V-one-eighth} yields
\[
 V(1/8)>\frac{5532789}{20480000}.
\]
On the other hand, using $\pi<22/7$,
\[
 \frac{U_*}{32}=\frac{165\pi-512}{32}<\frac{23}{112},
 \qquad
 \frac{5532789}{20480000}-\frac{23}{112}
 =\frac{9289523}{143360000}>0.
\]
Hence
\begin{equation}\label{eq:V-lower}
 32V(q)\ge32V(1/8)>U_*\qquad(0<q\le1/8).
\end{equation}
Together, \eqref{eq:U-lower} and \eqref{eq:V-lower} prove the following.

\begin{theorem}[Cut-boundary modulus separation]\label{thm:cut-separation}
For every $r>\rho=1/16$,
\[
 r|F_\pm(r)|^2>\rho F(\rho)^2=\sigma.
\]
\end{theorem}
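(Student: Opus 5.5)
The theorem follows by assembling the pieces just established, so the plan is mainly to check that they fit together without gaps.

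\emph{Step 1: reduce to $U,V$.} Fix $r>\rho$ and put $q=(16r)^{-1}\in(0,1)$. The identity \eqref{eq:modulus-UV} gives
\[
 r|F_\pm(r)|^2=\frac4{225\pi^2}\bigl(U(q)^2+1024V(q)^2\bigr),
\]
and the same expression holds for both signs because the sides differ only by $\pm$ in front of the real quantity $32V(q)$. The constants \eqref{eq:constants-exact} give $\sigma=4U_*^2/(225\pi^2)$. So the claim is exactly \eqref{eq:UV-target}.

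\emph{Step 2: split $(0,1)$ at $q=1/8$.}

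For $1/8\le q<1$, the bound \eqref{eq:U-lower} gives $U(q)>U_*>0$. Hence $U(q)^2>U_*^2$, and adding the nonnegative term $1024V(q)^2$ keeps the strict inequality.

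For $0<q\le1/8$, the bound \eqref{eq:V-lower} gives $32V(q)>U_*>0$. Squaring, $1024V(q)^2>U_*^2$, and adding $U(q)^2\ge0$ preserves strictness.

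In both cases squaring is legitimate because the quantities compared are positive, and $U_*>0$ was certified exactly through $\pi>333/106$. The overlap at $q=1/8$ is harmless.

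\emph{Step 3: the remaining checks.} The delicate inputs are already settled: the monotonicity of $U$ via the positivity of $H$, the monotonicity of $V$ via $D_0>0$, and the rational lower bound on $V(1/8)$. What is left is bookkeeping:
\begin{itemize}
\item that the one-sided boundary values \eqref{eq:side-boundary-values} exist, which comes from the local continuations given by the connection formulae;
\item that the monotonicity argument yielding \eqref{eq:U-lower} uses the limit $U(q)\to U_*$ as $q\uparrow1$, so strict decrease of $U$ on $[1/8,1)$ gives the strict bound $U(q)>U_*$ there.
\end{itemize}
I expect no substantive obstacle. The only point needing care is the second one: the strict inequality comes from strict monotonicity of $U$ on $[1/8,1)$ together with continuity of $U$ up to the endpoint $q=1$, which needs $U'<0$ on that interval.
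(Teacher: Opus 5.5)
Your proposal is correct and follows essentially the same route as the paper. You reduce via \eqref{eq:modulus-UV} to $U(q)^2+1024V(q)^2>U_*^2$. On $[1/8,1)$ you use $U(q)>U_*$, which comes from $H>0$, hence $U'<0$, together with $U(q)\to U_*$. On $(0,1/8]$ you use $32V(q)\ge 32V(1/8)>U_*$, which comes from $D_0>0$ and the rational lower bound on $V(1/8)$; on each range one square alone exceeds $U_*^2$, exactly as in \eqref{eq:U-lower}--\eqref{eq:V-lower}.
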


\subsection{Infinity estimates and the moving horizontal Hankel contour}

We use a moving Hankel deformation adapted to the present cut-plane problem;
for the classical Hankel-contour method in asymptotic analysis, see
\cite{Olver}.

The limiting connection formula at infinity for an integral parameter
difference is DLMF
\cite[\S15.8(ii), Eq.~(15.8.8)]{DLMF}.  Applying it to both
hypergeometric functions in \eqref{eq:F-def} gives, uniformly as
$|z|\to\infty$ with $z\in\D$,
\begin{equation}\label{eq:F-infinity}
 F(z)=\frac{64}{15\pi}(-z)^{-1/2}+\frac3{2z}
 +O\bigl(|z|^{-3/2}(1+\log|z|)\bigr),
\end{equation}
where the square-root branch is compatible with $\D$.  Consequently,
\begin{equation}\label{eq:Phi-infinity}
 |z||F(z)|^2=\frac{4096}{225\pi^2}+O(|z|^{-1/2}),
 \qquad \frac{4096}{225\pi^2}>\sigma.
\end{equation}
This uniformity on the whole cut plane, rather than merely on two boundary
rays, follows because the connection formulae consist of convergent series in
$1/z$ and their products with $\Log(-z)$.  Once $|z|$ is large, the series
remainders are uniformly geometric throughout $\D$, and
$|\Log(-z)|\le\log|z|+\pi$.  The leading constant is checked in
Appendix~\ref{app:infinity}.

Fix $n>16$ and $m=2n-1$.  Define the moving contour
\begin{align*}
 \Hankel_{n,-}&=\{r-i/n:r\ge\rho\}, &&r:\infty\to\rho,\\
 \Hankel_{n,0}&=\{\rho-n^{-1}e^{i\theta}:-\pi/2\le\theta\le\pi/2\},
 &&\theta:\pi/2\to-\pi/2,\\
 \Hankel_{n,+}&=\{r+i/n:r\ge\rho\}, &&r:\rho\to\infty,
\end{align*}
and $\Hankel_n=\Hankel_{n,-}\cup\Hankel_{n,0}\cup\Hankel_{n,+}$.
The semicircle passes to the left of $\rho$.  By
Lemma~\ref{lem:zero-free},
$F(z)^{-m}z^{-n-1}$ is holomorphic in the deformation region apart from the
origin.  Moreover, \eqref{eq:F-infinity} gives
\[
 F(z)^{-m}z^{-n-1}=O_n(|z|^{-3/2}),
\]
so the outer circular arc tends to zero.  Deforming \eqref{eq:Cauchy-start}
therefore gives the exact identity
\begin{equation}\label{eq:moving-Hankel}
 S(n)=-\frac1m\frac1{2\pi i}\int_{\Hankel_n}
 F(z)^{-m}z^{-n-1}\dd z.
\end{equation}
All points of this contour remain inside the holomorphy domain of $F$.

Scale
\begin{equation}\label{eq:Hankel-scaling}
 z=\rho\left(1+\frac tn\right),
 \qquad a:=\rho^{-1}=16.
\end{equation}
The image is the fixed contour $\Hankel_a$ consisting of the lower ray
$x-ia$ directed from infinity to zero, the left semicircle
$-ae^{i\theta}$ directed from $-ia$ to $ia$, and the upper ray $x+ia$
directed from zero to infinity.  Thus
\begin{equation}\label{eq:scaled-Hankel}
 S(n)=-\frac{\rho^{-n}}{mn}\frac1{2\pi i}
 \int_{\Hankel_a}
 F\!\left(\rho\left(1+\frac tn\right)\right)^{-m}
 \left(1+\frac tn\right)^{-n-1}\dd t.
\end{equation}

\subsection{Hankel-contour evaluation of $S(n)$}

The connection formula actually provides convergent functions $P,q_0$
holomorphic for $|u|<r_0$ such that
\begin{equation}\label{eq:exact-Pq}
 F(\rho(1-u))=P(u)+q_0(u)\Log u,
 \qquad q_0(u)=Bu^4+O(u^5),\quad B=-\frac1\pi.
\end{equation}
On $\Hankel_a$, $u=-t/n$ and
\begin{equation}\label{eq:log-scale}
 \Log(-t/n)=\Log(-t)-\log n.
\end{equation}
Let $L_n=(\log n)^2$ and split $\Hankel_a$ into
\begin{align}
 \Hankel_a^{(0)}&=\Hankel_a\cap\{\re t\le L_n\},\notag\\
 \Hankel_a^{(1)}&=\Hankel_a\cap\{L_n<\re t\le\delta n\},\notag\\
 \Hankel_a^{(2)}&=\Hankel_a\cap\{\re t>\delta n\}.
 \label{eq:Hankel-split}
\end{align}

We first record the elementary complex estimate used for the negative integer
power.

\begin{lemma}[Finite-geometric estimate]\label{lem:negative-power}
Fix $0<\xi_0<1/4$.  If $m\ge1$, $\xi\in\C$, and $m|\xi|\le\xi_0$, then
\[
 (1+\xi)^{-m}=1-m\xi+E_m(\xi),
 \qquad |E_m(\xi)|\le C_{\xi_0}m^2|\xi|^2.
\]
\end{lemma}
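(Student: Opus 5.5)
The approach is to use the principal branch $\Log(1+\xi)$, which is well defined because $|\xi|\le\xi_0/m<1/4$, and to write $(1+\xi)^{-m}=\exp(-m\Log(1+\xi))$. With this choice the estimate becomes a statement about two Taylor remainders, one for the logarithm and one for the exponential, each controlled by explicit constants depending only on $\xi_0$.

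The first step handles the logarithm. For $|\xi|<1/4$ we have $\Log(1+\xi)=\xi+R(\xi)$, where
\[
 |R(\xi)|\le\sum_{k\ge2}\frac{|\xi|^k}{k}\le\frac{|\xi|^2}{2(1-|\xi|)}\le\frac23|\xi|^2 .
\]
Setting $w:=-m\Log(1+\xi)=-m\xi-mR(\xi)$, this gives
\[
 |w|\le m|\xi|+\tfrac23 m|\xi|^2\le\xi_0+\tfrac23\xi_0\cdot\tfrac14<1,
\]
because $m\ge1$ implies $|\xi|\le\xi_0\le1/4$. So $w$ lies in a fixed disk, and its radius depends only on $\xi_0$.

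The second step handles the exponential. For $|w|\le1$ we have $|e^w-1-w|\le|w|^2\sum_{k\ge2}1/k!\le(e-2)|w|^2$. This yields
\[
 (1+\xi)^{-m}=1+w+O(|w|^2)=1-m\xi-mR(\xi)+O(|w|^2).
\]
The two error pieces are then bounded separately. The first satisfies $|mR(\xi)|\le\frac23 m|\xi|^2\le\frac23 m^2|\xi|^2$, using $m\le m^2$. For the second, $|w|\le m|\xi|(1+\frac23|\xi|)\le\frac76 m|\xi|$, so $|w|^2\le\frac{49}{36}m^2|\xi|^2$.

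Adding these two bounds gives $|E_m(\xi)|\le C_{\xi_0}m^2|\xi|^2$ with an absolute constant, for example $C=\frac23+(e-2)\frac{49}{36}<2$. In fact this constant is uniform in $\xi_0<1/4$. There is no real obstacle here; the only point needing care is to choose the principal branch so that $\exp(-m\Log(1+\xi))$ really equals the integer power, which holds for any branch because $m$ is an integer. Alternatively, one could expand $(1+\xi)^{-m}$ binomially and bound the tail $\sum_{k\ge2}\binom{m+k-1}{k}|\xi|^k$ by a geometric series in $m|\xi|$, which gives the same result.
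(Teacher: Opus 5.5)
Your proof is correct, but it takes a different route from the paper's.

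\textbf{The paper's route.} It never introduces a logarithm. It applies the finite geometric identity $(1+\xi)^{-m}-1=-\xi\sum_{k=1}^m(1+\xi)^{-k}$ twice. The second application writes the remainder exactly as $E_m(\xi)=\xi^2\sum_{k=1}^m\sum_{j=1}^k(1+\xi)^{-j}$. It then bounds each of the $m(m+1)/2\le m^2$ terms by $(1-|\xi|)^{-j}\le e^{2j|\xi|}\le e^{2\xi_0}$, which gives $C_{\xi_0}=e^{2\xi_0}$.

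\textbf{Your route.} You write $(1+\xi)^{-m}=e^{w}$ with $w=-m\Log(1+\xi)$. This splits the error into the logarithmic remainder $-mR(\xi)$ and the exponential remainder $e^w-1-w$. Your checks all hold: $|R(\xi)|\le\frac23|\xi|^2$, $|w|\le\frac76 m|\xi|<1$, and the total constant $\frac23+\frac{49}{36}(e-2)<2$.

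\textbf{Comparison.} The paper's argument is purely algebraic and uses that $m$ is a positive integer. It has no branch or convergence issues, and the remainder is an explicit finite sum. Yours needs the series for $\Log(1+\xi)$ and the disk bound $|w|\le1$. In exchange, it never uses integrality of the exponent in an essential way. It therefore carries over verbatim to $(1+\xi)^{-\alpha}$ for real $\alpha>0$, and to expansions of higher order.

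\textbf{A small point of phrasing.} The principal branch is needed for the Taylor bound on $R(\xi)$. It is not needed for the identity $\exp(-m\Log(1+\xi))=(1+\xi)^{-m}$, which holds for every branch because $m$ is an integer.

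\textbf{Your binomial alternative.} It also works, since $\binom{m+k-1}{k}\le m^k$ bounds the tail by $(m|\xi|)^2/(1-\xi_0)$.
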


\begin{proof}
The finite geometric identity gives
\[
 (1+\xi)^{-m}-1=-\xi\sum_{k=1}^m(1+\xi)^{-k}.
\]
After adding $m\xi$ and using
$1-(1+\xi)^{-k}=\xi\sum_{j=1}^k(1+\xi)^{-j}$, we have
\[
 |(1+\xi)^{-m}-1+m\xi|
 \le e^{2\xi_0}|\xi|^2\sum_{k=1}^mk
 \le e^{2\xi_0}m^2|\xi|^2,
\]
because $|(1+\xi)^{-j}|\le(1-|\xi|)^{-j}\le e^{2j|\xi|}$.
\end{proof}

\begin{proposition}[Local Hankel contribution]\label{prop:local-Hankel}
Let $\mathcal L_n$ denote the part of the scaled Hankel integral
\eqref{eq:scaled-Hankel} over $\Hankel_a^{(0)}$; explicitly,
\[
 \mathcal L_n:=-\frac{\rho^{-n}}{mn}\frac1{2\pi i}
 \int_{\Hankel_a^{(0)}}
 F\!\left(\rho\left(1+\frac tn\right)\right)^{-m}
 \left(1+\frac tn\right)^{-n-1}\dd t.
\]
Then
\[
 \mathcal L_n=-\frac{24B}{A^5}\,\sigma^{-n}n^{-5}
 +o(\sigma^{-n}n^{-5}).
\]
\end{proposition}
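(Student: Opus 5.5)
The plan is to isolate the single non-analytic ingredient, the $u^4\Log u$ term in \eqref{eq:exact-Pq}, and to show that everything holomorphic in $t$ contributes only super-polynomially small amounts, because it can be pushed to the far end $\re t=L_n$ of the truncated contour. On $\Hankel_a^{(0)}$ we have $|t|\le L_n+O(1)$ and $u=-t/n$, so $|u|\ll(\log n)^2/n$. Write
\[
 F\Bigl(\rho\bigl(1+\tfrac tn\bigr)\Bigr)=P(-t/n)\bigl(1+\xi(t)\bigr),
 \qquad
 \xi(t)=\frac{q_0(-t/n)\Log(-t/n)}{P(-t/n)} .
\]
Since $P(0)=F_0\neq0$, we get $\xi(t)=\frac{B}{F_0}\frac{t^4}{n^4}\bigl(\Log(-t)-\log n\bigr)\bigl(1+O(|t|/n)\bigr)$, where \eqref{eq:log-scale} has been used. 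Hence $m|\xi|\ll L_n^4\log n/n^3\to0$, and Lemma~\ref{lem:negative-power} applies:
\[
 (1+\xi)^{-m}=1-m\xi+O(m^2|\xi|^2).
\]
Define the holomorphic factor
\[
 G_n(t):=P(-t/n)^{-m}\bigl(1+t/n\bigr)^{-n-1}.
\]
It is holomorphic in $t$ on $|t|<r_0n$, which contains a neighbourhood of $\Hankel_a^{(0)}$ together with the vertical segment $\re t=L_n$, $|\im t|\le a$.

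For the size of $G_n$, write $P(-t/n)/F_0=1+\rho F'(\rho)t/(F_0n)+O(t^2/n^2)$ using \eqref{eq:P-data}. Together with $m=2n-1$ this gives, uniformly for $|t|\le 2L_n$,
\[
 G_n(t)=F_0^{-m}e^{-At}\bigl(1+O(|t|^2/n+|t|/n)\bigr),
\]
with $A=1+2\rho F'(\rho)/F_0$ exactly as in \eqref{eq:constants}. The integrand is then split into three pieces:
\[
 G_n(t)\bigl[1-m\xi_{\mathrm{an}}(t)\bigr]\;-\;G_n(t)\,m\xi_{\log}(t)\;+\;\text{errors}.
\]
Here $\xi_{\mathrm{an}}$ collects the part of $\xi$ carrying $-\log n$, which is holomorphic in $t$, and $\xi_{\log}$ is the part carrying $\Log(-t)$. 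The first piece is holomorphic near the contour. By Cauchy's theorem its integral over $\Hankel_a^{(0)}$ equals the integral over the closing segment $\re t=L_n$. There $|e^{-At}|=n^{-A\log n}$ and the remaining factors are polynomially bounded in $n$, so this piece is $o(F_0^{-m}n^{-N})$ for every $N$.

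The second piece is the main term:
\[
 -\frac{2B}{F_0}\,n^{-3}F_0^{-m}\int_{\Hankel_a^{(0)}}t^4\Log(-t)\,e^{-At}\bigl(1+O(L_n^2/n)\bigr)\dd t .
\]
Extending the contour to all of $\Hankel_a$ costs only $e^{-AL_n}$. On the full contour the same Cauchy argument lets us collapse $\Hankel_a$ onto the two sides of $[0,\infty)$. The integrand is integrable at $0$, and the boundary values are $\Log(-t)=\log x\mp i\pi$ on the upper and lower sides. This gives
\[
 \frac1{2\pi i}\int_{\Hankel_a}t^4\Log(-t)e^{-At}\dd t=-\frac{24}{A^5}.
\]
Multiplying by the prefactor $-\rho^{-n}/(mn)$ and using $\rho^{-n}F_0^{-m}=F_0\,\sigma^{-n}$ and $m\sim2n$ gives
\[
 -\frac{24B}{A^5}\,\sigma^{-n}n^{-5}.
\]

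It remains to check that the errors are relatively $o(1)$. There are three sources:
\begin{itemize}
\item the $O(u^5(1+|\Log u|))$ part of $q_0$, which contributes relatively $O(L_n(\log n)/n)$;
\item the correction factor $O(|t|^2/n)$ in $G_n$, applied to the $\Log(-t)$ term, which contributes relatively $O(L_n^2/n)$;
\item the remainder $O(m^2|\xi|^2)$ from Lemma~\ref{lem:negative-power}, which is $O(L_n^8\log^2n/n^6)$ against a main term of order $n^{-3}$.
\end{itemize}
Each of these is integrated against $|e^{-At}|$, which is integrable along $\Hankel_a$ because $\re(At)\ge A\re t$ and $\re t\to+\infty$ on the rays. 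I expect the main obstacle to be bookkeeping rather than analysis. The holomorphic/non-holomorphic separation must be done with the exact $G_n$, not its Taylor expansion, since otherwise the Cauchy shift does not apply. The $\log n$ pieces must also be recognised as analytic and therefore negligible, even though they are formally larger than the main term.
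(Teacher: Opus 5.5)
Your proposal is correct and follows essentially the same route as the paper. You factor $F=P_n(1+\xi)$ with $\xi=h_n/P_n$, apply Lemma~\ref{lem:negative-power}, remove every $t$-holomorphic piece (the $P_n^{-m}$ term and the $-\log n$ part) by closing at $\re t=L_n$ with the exact integrand, and evaluate the $\Log(-t)$ piece by Hankel's formula as $-24/A^5$. The differences are cosmetic: you merge the two holomorphic pieces into one, and you control $G_n$ through the uniform expansion $G_n=F_0^{-m}e^{-At}\bigl(1+O(L_n^2/n)\bigr)$ rather than through the phase function $\lambda$ and dominated convergence.
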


\begin{proof}
On $\Hankel_a^{(0)}$ put
\[
 P_n(t):=P(-t/n),
 \qquad h_n(t):=q_0(-t/n)(\Log(-t)-\log n).
\]
Then
\[
 F\!\left(\rho\left(1+\frac tn\right)\right)=P_n(t)+h_n(t).
\]
For large $n$, $|P_n(t)|\ge F_0/2$ uniformly on the local contour.  Since the
fixed contour stays a distance $a$ from the logarithmic cut,
\[
 |\Log(-t)|\le C_{\mathrm{log}}(1+\log(1+|t|)),
\]
and therefore
\begin{equation}\label{eq:small-hn}
 m\left|\frac{h_n(t)}{P_n(t)}\right|
 \le C_{\mathrm{small}}n^{-3}(\log n)^9\longrightarrow0.
\end{equation}
Writing $C_E:=C_{\xi_0}$ and applying Lemma~\ref{lem:negative-power},
\begin{equation}\label{eq:negative-power-expansion}
 (P_n+h_n)^{-m}=P_n^{-m}-mP_n^{-m-1}h_n+E_n,
 \qquad
 |E_n|\le C_E|P_n|^{-m}m^2\left|\frac{h_n}{P_n}\right|^2.
\end{equation}
Moreover,
\begin{equation}\label{eq:hn-leading}
 h_n(t)=\frac{Bt^4}{n^4}(\Log(-t)-\log n)+r_n(t),
\end{equation}
with
\begin{equation}\label{eq:rn-bound}
 |r_n(t)|\le C_r\frac{|t|^5}{n^5}
   (\log n+1+\log(1+|t|)).
\end{equation}

Define the holomorphic phase near zero by
\begin{equation}\label{eq:phase}
 \lambda(w):=\Log(1+w)+2\Log\!\left(\frac{P(-w)}{F_0}\right),
\end{equation}
where the second logarithm is the branch vanishing at zero.  From
\eqref{eq:P-data},
\begin{equation}\label{eq:phase-expansion}
 \lambda(0)=0,\qquad \lambda'(0)=A,\qquad
 \lambda(w)=Aw+O(w^2).
\end{equation}
The exact algebraic identity
\begin{equation}\label{eq:phase-identity}
 P_n(t)^{-2n}\left(1+\frac tn\right)^{-n-1}
 =F_0^{-2n}\frac{e^{-n\lambda(t/n)}}{1+t/n}
\end{equation}
is the source of the endpoint Laplace factor.  On the horizontal pieces
$t=x\pm ia$, $0\le x\le L_n$, shrinking the local neighborhood if necessary,
\begin{equation}\label{eq:phase-bound}
 \re\lambda((x\pm ia)/n)\ge\frac{Ax}{2n}-\frac {C_\lambda}{n^2},
 \qquad
 |e^{-n\lambda((x\pm ia)/n)}|\le C_{\exp}e^{-Ax/2}.
\end{equation}
Here $C_\lambda>0$ is the constant in the quadratic remainder estimate for
$\lambda$.  Choose $N$ so that the preceding estimates hold for every
$n\ge N$.  Then the first inequality gives
\[
 |e^{-n\lambda((x\pm ia)/n)}|
 \le e^{C_\lambda/n}e^{-Ax/2}
 \le e^{C_\lambda/N}e^{-Ax/2},
\]
On the fixed left semicircle, the relation
$n\lambda(t/n)=At+O(n^{-1})$ holds uniformly; hence the exponential factor is
bounded there by some $M_0>0$.  We may therefore take
\[
 C_{\exp}:=\max\{e^{C_\lambda/N},M_0\}.
\]
Thus $C_\lambda$ and $C_{\exp}$ are fixed constants independent of $n,x,t$,
but they are not being identified with one another.

We now estimate separately every term on the right-hand side of
\eqref{eq:negative-power-expansion}.  Put
\[
 x_+(t):=\max\{\re t,0\}.
\]
Since $L_n/n\to0$, uniformly on $\Hankel_a^{(0)}$ we have
\[
 \left|1+\frac tn\right|^{-1}\le2,
 \qquad
 \left|\frac{e^{-n\lambda(t/n)}}{1+t/n}\right|
 \le C_{\mathrm{maj}}e^{-Ax_+(t)/2},
 \qquad C_{\mathrm{maj}}:=2C_{\exp}.
\]
The second estimate combines \eqref{eq:phase-bound} on the horizontal
pieces with boundedness on the fixed semicircle.  Equations
\eqref{eq:hn-leading}--\eqref{eq:rn-bound} also give the uniform bound
\[
 |h_n(t)|\le C_h\frac{|t|^4}{n^4}
 \bigl(\log n+1+\log(1+|t|)\bigr).
\]
All constants $C_{\mathrm{log}},C_{\mathrm{small}},C_E,C_r,C_\lambda,
C_{\exp},C_{\mathrm{maj}},C_h$ are independent of $n$ and of the point $t$ on
the indicated contour.  The constants
$K_0,K_r,K_2,K_{\mathrm{an}}$ used below are likewise independent of $n$ and
$t$; within each paragraph the corresponding constant is enlarged once, if
needed, to cover all displayed inequalities in that paragraph.

\paragraph{The purely holomorphic term $P_n^{-m}$.}
Let $\mathcal J_{n,0}$ denote the contribution of $P_n^{-m}$ to the local
part of \eqref{eq:scaled-Hankel}.  Because $m=2n-1$,
\begin{align*}
 P_n(t)^{-m}\left(1+\frac tn\right)^{-n-1}
 &=P_n(t)P_n(t)^{-2n}
     \left(1+\frac tn\right)^{-n-1}\\
 &=F_0^{-2n}P_n(t)
   \frac{e^{-n\lambda(t/n)}}{1+t/n}.
\end{align*}
Close $\Hankel_a^{(0)}$ by the vertical segment
\[
 \mathcal V_n=\{L_n+iy:-a\le y\le a\}.
\]
The resulting closed region contains neither $t=-n$ nor a zero of
$P(-t/n)$, so the displayed integrand is holomorphic there.  The expansion
$\lambda(w)=Aw+O(w^2)$ gives, uniformly on $\mathcal V_n$,
\[
 \re\lambda((L_n+iy)/n)\ge\frac{AL_n}{2n}
\]
for all sufficiently large $n$.  Since $P_n$ is uniformly bounded and
$\mathcal V_n$ has fixed length $2a$, Cauchy's theorem and
$\rho^{-n}F_0^{-2n}=\sigma^{-n}$ yield
\begin{align*}
 |\mathcal J_{n,0}|
 &\le K_0\frac{\rho^{-n}F_0^{-2n}}{mn}
       e^{-AL_n/2}\\
 &\le K_0\sigma^{-n}n^{-2}e^{-AL_n/2}
  =o(\sigma^{-n}n^{-5}),
\end{align*}
because $L_n=(\log n)^2$.  Thus the term $P_n^{-m}$ is exponentially
smaller than the desired main scale.

\paragraph{The linear term $-mP_n^{-m-1}h_n$.}
The two minus signs---one in \eqref{eq:scaled-Hankel} and one in
\eqref{eq:negative-power-expansion}---cancel.  Since $m+1=2n$, its local
contribution is
\[
 \frac{\rho^{-n}}n\frac1{2\pi i}
 \int_{\Hankel_a^{(0)}}
 P_n(t)^{-2n}h_n(t)
 \left(1+\frac tn\right)^{-n-1}\dd t.
\]
Substituting the leading part of \eqref{eq:hn-leading} and using
\eqref{eq:phase-identity} shows that the part containing $B$ is
\begin{equation}\label{eq:B-integral}
 B\sigma^{-n}n^{-5}\frac1{2\pi i}
 \int_{\Hankel_a^{(0)}}\frac{e^{-n\lambda(t/n)}}{1+t/n}
 t^4(\Log(-t)-\log n)\dd t.
\end{equation}
Let $\mathcal J_{n,r}$ be the part obtained by replacing $h_n$ with $r_n$.
Using \eqref{eq:phase-identity}, \eqref{eq:rn-bound}, and the preceding
exponential majorant, we obtain
\begin{align*}
 |\mathcal J_{n,r}|
 &\le K_r\frac{\rho^{-n}F_0^{-2n}}n
 \int_{\Hankel_a^{(0)}}
 |r_n(t)|e^{-Ax_+(t)/2}|\dd t|\\
 &\le K_r\sigma^{-n}n^{-6}
 \int_{\Hankel_a^{(0)}} |t|^5
 \bigl(\log n+1+\log(1+|t|)\bigr)
 e^{-Ax_+(t)/2}|\dd t|\\
 &\le K_r\sigma^{-n}n^{-6}\log n
  =o(\sigma^{-n}n^{-5}).
\end{align*}
In the last step, after factoring out $\log n$, the two horizontal pieces are
dominated, upon writing $t=x\pm ia$, by an integrable multiple of
\[
 (1+x)^5\bigl(1+\log(2+x)\bigr)e^{-Ax/2},
\]
and the fixed semicircle contributes only a bounded amount.

\paragraph{The quadratic error $E_n$.}
Let $\mathcal J_{n,2}$ be the contribution of $E_n$ in
\eqref{eq:negative-power-expansion}.  Since
\[
 |E_n(t)|\le C_E m^2|P_n(t)|^{-m-2}|h_n(t)|^2
 =C_E m^2|P_n(t)|^{-2n-1}|h_n(t)|^2,
\]
the outer factor in \eqref{eq:scaled-Hankel}, the bound
$|P_n|^{-1}\le2/F_0$, and $m/n\le2$ give
\begin{align*}
 |\mathcal J_{n,2}|
 &\le K_2\rho^{-n}\frac mn
 \int_{\Hankel_a^{(0)}}
 |P_n(t)|^{-2n-1}|h_n(t)|^2
 \left|1+\frac tn\right|^{-n-1}|\dd t|\\
 &\le K_2\sigma^{-n}
 \int_{\Hankel_a^{(0)}}|h_n(t)|^2
 e^{-Ax_+(t)/2}|\dd t|\\
 &\le K_2\sigma^{-n}n^{-8}(\log n)^2
 \int_{\Hankel_a^{(0)}}
 (1+|t|)^8\bigl(1+\log(2+|t|)\bigr)^2
 e^{-Ax_+(t)/2}|\dd t|\\
 &\le K_2\sigma^{-n}n^{-8}(\log n)^2
  =o(\sigma^{-n}n^{-5}).
\end{align*}
This displays explicitly that the factor $1/(mn)$ outside the integral reduces
the factor $m^2$ in $E_n$ to $m/n=O(1)$, whereas $h_n^2$ supplies the factor
$n^{-8}$.

\paragraph{The analytic $-\log n$ part of the $B$-term.}
The function
\[
 t\longmapsto \frac{e^{-n\lambda(t/n)}}{1+t/n}t^4
\]
is holomorphic in the same closed region used above.  Closing by
$\mathcal V_n$ therefore gives
\[
 \left|\int_{\Hankel_a^{(0)}}
 \frac{e^{-n\lambda(t/n)}}{1+t/n}t^4\dd t\right|
 \le K_{\mathrm{an}}(1+L_n)^4e^{-AL_n/2}.
\]
Consequently the part of \eqref{eq:B-integral} multiplied by $-\log n$ is,
including its prefactor, at most
\[
 K_{\mathrm{an}}\sigma^{-n}n^{-5}\log n\,(1+L_n)^4e^{-AL_n/2}
 =o(\sigma^{-n}n^{-5}).
\]
The factor $(1+L_n)^4$, which comes from $t^4$ on the closing segment, is
still dominated by the Gaussian-in-$\log n$ decay
$e^{-A(\log n)^2/2}$.

Combining the four estimates above, the local contribution is
\begin{equation}\label{eq:local-contribution}
 B\sigma^{-n}n^{-5}I_n+o(\sigma^{-n}n^{-5}),
\end{equation}
where
\begin{equation}\label{eq:In}
 I_n=\frac1{2\pi i}\int_{\Hankel_a^{(0)}}
 \frac{e^{-n\lambda(t/n)}}{1+t/n}t^4\Log(-t)\dd t.
\end{equation}

For fixed $t$, the integrand converges to $e^{-At}t^4\Log(-t)$.
The bound \eqref{eq:phase-bound} supplies an integrable majorant on the
horizontal rays, and convergence is uniform on the semicircle.  Dominated
convergence therefore gives
\begin{equation}\label{eq:In-limit}
 I_n\longrightarrow I_a(A):=\frac1{2\pi i}\int_{\Hankel_a}
 e^{-At}t^4\Log(-t)\dd t.
\end{equation}
To evaluate this integral, truncate at $\re t=R$, close vertically, and then
press the contour onto the positive real axis.  The connecting integral is
$O(R^4(1+\log R)e^{-AR})$.  Since
\[
 \Log(-(x-i0))=\log x+i\pi,\qquad
 \Log(-(x+i0))=\log x-i\pi,
\]
and the lower side is traversed from infinity to zero,
\begin{equation}\label{eq:fixed-Hankel-value}
 I_a(A)=-\int_0^\infty e^{-Ax}x^4\dd x
 =-\frac{\Gamma(5)}{A^5}=-\frac{24}{A^5}.
\end{equation}
The answer is independent of $a>0$.  Substitution into
\eqref{eq:local-contribution} proves the asserted estimate for $\mathcal L_n$.
\end{proof}

\begin{proposition}[Intermediate and far Hankel contributions]
\label{prop:intermediate-far-Hankel}
For a sufficiently small fixed $\delta>0$, the parts of the
scaled Hankel integral \eqref{eq:scaled-Hankel} over
$\Hankel_a^{(1)}$ and $\Hankel_a^{(2)}$ are both
$o(\bigl(\sigma^{-n}n^{-5}\bigr)$.
\end{proposition}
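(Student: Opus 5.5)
The plan is to bound the integrand pointwise in modulus and show exponential smallness relative to the main scale. In the original variable $z=\rho(1+t/n)$ the integrand of \eqref{eq:moving-Hankel} has modulus
\[
 \frac{1}{|z|}\,\bigl(|z||F(z)|^2\bigr)^{-n}\,|F(z)|.
\]
Write $\Psi(z):=|z||F(z)|^2$. Then $S(n)$ is controlled by $\sigma^{-n}$ times powers of $\sigma/\Psi$, so the task is to obtain lower bounds for $\Psi$ on the horizontal lines $\im z=\pm\rho a/n$.

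\textbf{Far part $\Hankel_a^{(2)}$.} Here $r=\re z\geq\rho(1+\delta)$. The claim is that there exists $\kappa=\kappa(\delta)>0$ with $\Psi(z)\geq\sigma(1+\kappa)$ for all $z$ with $\re z\geq\rho(1+\delta)$ and $|\im z|\leq\epsilon_0$, once $n$ is large. The argument combines three inputs.
\begin{enumerate}
\item Theorem~\ref{thm:cut-separation} gives $r|F_\pm(r)|^2>\sigma$ pointwise.
\item By \eqref{eq:Phi-infinity}, $\Psi\to 4096/(225\pi^2)>\sigma$ uniformly at infinity in $\D$. Fix $R$ so that $\Psi\geq\sigma(1+\kappa_\infty)$ for $|z|\geq R$.
\item On the compact set $[\rho(1+\delta),R]$, the one-sided boundary values are continuous. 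They are the restrictions of the local holomorphic continuations from each half-plane, given by the connection formulae \eqref{eq:K-boundary}--\eqref{eq:E-boundary}. Hence they attain a minimum strictly above $\sigma$. These continuations are uniformly continuous on a fixed strip neighborhood of the segment, so the bound persists, with $\kappa$ halved, at height $|\im z|=\rho a/n$ once $n$ is large.
\end{enumerate}
In addition, $|F(z)|/|z|$ is integrable along the rays: $|F|\lesssim|z|^{-1/2}$ by \eqref{eq:F-infinity}, and the extra factor $\Psi^{-(n-2)}$ for $n\ge 3$ ensures convergence. The far contribution is therefore $O(\sigma^{-n}(1+\kappa)^{-n}\cdot n)$ after accounting for the prefactor $1/m$, which is $o(\sigma^{-n}n^{-5})$.

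\textbf{Intermediate part $\Hankel_a^{(1)}$.} Here $L_n<\re t\leq\delta n$, so $w=t/n$ is small, with $\re w\geq L_n/n$ and $|\im w|=a/n$. We stay inside the disk $|u|<r_0$, where \eqref{eq:exact-Pq} holds. Write $F=P_n(1+h_n/P_n)$ and use the phase identity \eqref{eq:phase-identity}. The bound \eqref{eq:phase-bound}, extended from $x\le L_n$ to $x\le\delta n$ by shrinking $\delta$ so that the quadratic remainder of $\lambda$ is at most $A|w|/4$, gives
\[
 |P_n^{-2n}(1+t/n)^{-n-1}|\le C\,\sigma^{-n}\rho^{n}e^{-Ax/4}.
\]
It remains to control $|1+h_n/P_n|^{-m}$. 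We have $|h_n/P_n|\le C|w|^4(1+|\log|w||)$, which only gives $m|h_n/P_n|\lesssim x^4n^{-3}\log n$. This is not small when $x\sim\delta n$, so Lemma~\ref{lem:negative-power} is unavailable here. Instead we use the crude bound $|1+h_n/P_n|^{-m}\le\exp(2m|h_n/P_n|)\le\exp(Cx\,\delta^3\log(1/\delta)\cdot\text{const})$, using $|w|\le\delta$. Choosing $\delta$ small makes this exponent at most $Ax/8$. The integrand is then bounded by $C\sigma^{-n}e^{-Ax/8}$, and integrating over $x>L_n=(\log n)^2$ gives $\sigma^{-n}e^{-A(\log n)^2/8}=o(\sigma^{-n}n^{-5})$, with the prefactor $1/(mn)$ only helping.

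\textbf{Main obstacle.} The delicate step is the intermediate range. There the logarithmic correction $h_n$ enters through the $2n$-th power, so one must absorb $m|h_n/P_n|$ into the exponential decay $e^{-Ax}$ rather than treating it as small. This requires making the dependence on $\delta$ explicit: the bound $|w|^3\log(1/|w|)\to0$ is what allows $\delta$ to be chosen with both the phase remainder and the $h_n$ factor dominated by $Ax/8$. A secondary technical point in the far range is justifying uniform continuity of the one-sided continuations up to the cut on compact subsegments. This follows from the explicit elliptic representation \eqref{eq:F-boundary-UV} and its holomorphic extension off the real axis.
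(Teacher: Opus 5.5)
Your proposal is correct and is essentially the paper's proof. The far range is handled exactly as in the paper: cut separation from Theorem~\ref{thm:cut-separation}, the uniform bound \eqref{eq:Phi-infinity} beyond a fixed radius, and uniform convergence of the one-sided holomorphic branches on a compact segment. In the intermediate range, your splitting $F=P_n(1+h_n/P_n)$ with the phase $\lambda$ is only a multiplicative repackaging of the paper's direct lower bound $|z|\,|F(z)|^2\ge\sigma(1+\frac{A}{4}\frac{x}{n})$; it rests on the same choice of small $\delta$ via $y^4(1+\log(1/y))=o(y)$ and gives the same decay $e^{-Ax/8}$.
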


\begin{proof}

On the intermediate rays write
\[
 z_{n,\pm}(x)=\rho\left(1+\frac xn\right)\pm\frac in,
 \qquad L_n\le x\le\delta n.
\]
Using \eqref{eq:exact-Pq}, $q_0(u)=O(u^4)$, and
$P'(0)=-\rho F'(\rho)$, expansion gives uniformly
\begin{equation}\label{eq:intermediate-expansion}
 |z_{n,\pm}(x)|\,|F(z_{n,\pm}(x))|^2
 =\sigma\left[1+A\frac xn
 +O\!\left(\frac{x^2}{n^2}+\frac1{n^2}
 +\frac{x^4}{n^4}\left(1+\log\frac nx\right)\right)\right].
\end{equation}
For $y=x/n\downarrow0$, $y^2=o(y)$ and
$y^4(1+\log(1/y))=o(y)$.  Also $x\ge L_n$ implies
$n^{-2}/(x/n)=1/(nx)\le1/(nL_n)\to0$.  Thus one may first choose
$\delta>0$ small and then $n$ large so that
\begin{equation}\label{eq:intermediate-lower}
 |z_{n,\pm}(x)|\,|F(z_{n,\pm}(x))|^2
 \ge\sigma\left(1+c\frac xn\right),
 \qquad c=A/4.
\end{equation}
The identity
\begin{equation}\label{eq:integrand-modulus-identity}
 |F(z)|^{-m}|z|^{-n-1}
 =\frac{|F(z)|/|z|}{\bigl(|z|\,|F(z)|^2\bigr)^n}
\end{equation}
and local boundedness of $|F(z)|/|z|$, together with
$(1+cx/n)^{-n}\le e^{-cx/2}$ for large $n$, show that the intermediate
contribution is bounded by
\begin{equation}\label{eq:intermediate-small}
 K_{\mathrm{mid}}\sigma^{-n}n^{-2}
 \int_{L_n}^{\delta n}e^{-cx/2}\dd x
 =o(\sigma^{-n}n^{-5}).
\end{equation}
Indeed, the integral is at most $(2/c)e^{-cL_n/2}$, and hence the ratio of
the displayed bound to $\sigma^{-n}n^{-5}$ is at most a fixed multiple of
$n^3e^{-c(\log n)^2/2}$, which tends to zero.

Put $r_\delta=\rho(1+\delta)$ and
\[
 L_\infty:=\frac{4096}{225\pi^2}>\sigma.
\]
By the uniform estimate \eqref{eq:Phi-infinity}, one may choose a fixed
$M>r_\delta$ so large that
\begin{equation}\label{eq:far-tail-lower}
 |z|\,|F(z)|^2\ge q_\infty:=\frac{L_\infty+\sigma}{2}>\sigma
 \qquad(|z|\ge M, z\in\D).
\end{equation}
Increasing $M$ if necessary, the condition $r\ge M$ implies $|r\pm i/n|\ge M$,
so \eqref{eq:far-tail-lower} applies to the two moving rays.

It remains to obtain a bound on the fixed compact interval
$I_{\delta,M}:=[r_\delta,M]$.  The connection formulae \eqref{eq:F-boundary-UV} define holomorphic
branches $\widetilde F_\pm$ on complex neighborhoods $U_\pm\supset I_{\delta,M}$
such that
\[
 \widetilde F_\pm(z)=F(z)\quad
 (z\in U_\pm,\ \pm\im z>0),
 \qquad
 \widetilde F_\pm(r)=F_\pm(r)\quad(r\in I_{\delta,M}).
\]
Since $I_{\delta,M}$ is compact, there is an $\eta>0$ such that the closed
one-sided strips
\[
 \mathcal K_\pm:=
 \{r+iy:r\in I_{\delta,M},\ 0\le\pm y\le\eta\}
\]
are contained in $U_\pm$.  Compactness and holomorphy give finite constants
\[
 D_{\delta,M}:=
 \max_{\epsilon\in\{+,-\}}\sup_{z\in\mathcal K_\epsilon}
 |\widetilde F_\epsilon'(z)|,
 \qquad
 H_{\delta,M}:=
 \max_{\epsilon\in\{+,-\}}\sup_{z\in\mathcal K_\epsilon}
 |\widetilde F_\epsilon(z)|.
\]
For $n\ge\eta^{-1}$, integration along the vertical segment from $r$ to
$r\pm i/n$ therefore yields
\begin{equation}\label{eq:boundary-uniform-convergence}
 \sup_{r\in I_{\delta,M}}
 |F(r\pm i/n)-F_\pm(r)|\le\frac{D_{\delta,M}}n.
\end{equation}

The continuous boundary functions
\[
 G_\pm(r):=r|F_\pm(r)|^2
\]
satisfy $G_\pm(r)>\sigma$ by Theorem~\ref{thm:cut-separation}.  Hence their
minimum on the compact interval has a positive margin:
\begin{equation}\label{eq:compact-boundary-margin}
 m_{\delta,M}:=
 \min_{\epsilon\in\{+,-\}}\min_{r\in I_{\delta,M}}G_\epsilon(r)>\sigma,
 \qquad d_{\delta,M}:=m_{\delta,M}-\sigma>0.
\end{equation}
Moreover,
\[
 \bigl||r\pm i/n|-r\bigr|
 =\frac{n^{-2}}{|r\pm i/n|+r}
 \le\frac1{2r_\delta n^2}.
\]
Combining this estimate with \eqref{eq:boundary-uniform-convergence} and the
bound $H_{\delta,M}$ shows explicitly that
\begin{align*}
 &\sup_{r\in I_{\delta,M}}
 \left|
 |r\pm i/n|\,|F(r\pm i/n)|^2-r|F_\pm(r)|^2
 \right|\\
 &\qquad\le
 \frac{H_{\delta,M}^2}{2r_\delta n^2}
 +\frac{2M H_{\delta,M}D_{\delta,M}}n
 \longrightarrow0.
\end{align*}
Here $H_{\delta,M}$ may be enlarged once to bound both the values on the
strips and their boundary restrictions.  Thus, for all sufficiently large
$n$, the last supremum is smaller than $d_{\delta,M}/2$, and consequently
\[
 |r\pm i/n|\,|F(r\pm i/n)|^2
 \ge q_{\delta,M}^{\mathrm{comp}}
 :=\sigma+\frac{d_{\delta,M}}2>\sigma
 \qquad(r\in I_{\delta,M}).
\]
Taking
\[
 q_\delta:=\min\{q_\infty,q_{\delta,M}^{\mathrm{comp}}\}>\sigma
\]
combines the compact and infinite ranges and gives, independently of all
sufficiently large $n$,
\begin{equation}\label{eq:far-lower}
 |r\pm i/n|\,|F(r\pm i/n)|^2\ge q_\delta
 \qquad(r\ge r_\delta).
\end{equation}

We also need an integrable upper majorant for the numerator in
\eqref{eq:integrand-modulus-identity}.  On $I_{\delta,M}$, the bound just
obtained from the compact strips and $|r\pm i/n|\ge r_\delta$ gives
\[
 \frac{|F(r\pm i/n)|}{|r\pm i/n|}
 \le\frac{H_{\delta,M}}{r_\delta}
 \le\frac{H_{\delta,M}}{r_\delta}(1+M)^{3/2}(1+r)^{-3/2}.
\]
For $r\ge M$, the infinity expansion \eqref{eq:F-infinity} gives instead,
uniformly on the moving rays,
\[
 \frac{|F(r\pm i/n)|}{|r\pm i/n|}
 \le K_\infty r^{-3/2}
 \le K_\infty(1+M^{-1})^{3/2}(1+r)^{-3/2}.
\]
After taking the larger of these two fixed constants, there is therefore a
$K_\delta>0$, independent of all sufficiently large $n$, such that
\begin{equation}\label{eq:far-numerator-majorant}
 \frac{|F(r\pm i/n)|}{|r\pm i/n|}
 \le K_\delta(1+r)^{-3/2}\qquad(r\ge r_\delta).
\end{equation}
Equations \eqref{eq:integrand-modulus-identity}, \eqref{eq:far-lower}, and
\eqref{eq:far-numerator-majorant} now show that the far contribution is at most
\begin{equation}\label{eq:far-small}
 \frac{K_{\mathrm{far}}q_\delta^{-n}}n
 \int_{r_\delta}^\infty(1+r)^{-3/2}\dd r
 =o(\sigma^{-n}n^{-5}).
\end{equation}
Here the integral is finite and independent of $n$, while
$n^4(\sigma/q_\delta)^n\to0$ because $q_\delta>\sigma$.
This proves the proposition.
\end{proof}

\begin{theorem}[Coefficient asymptotics from the Hankel contour]
\label{thm:S-coeff-Hankel}
As $n\to\infty$,
\[
 S(n)\sim-\frac{24B}{A^5}\,\sigma^{-n}n^{-5}
 =\frac{24}{\pi A^5}\,\sigma^{-n}n^{-5}
 =\frac{24(165\pi-512)^5}{\pi(1280-405\pi)^5}\,
   \sigma^{-n}n^{-5}.
\]
\end{theorem}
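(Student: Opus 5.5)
The theorem follows by assembling the pieces already proved, starting from the exact identity \eqref{eq:scaled-Hankel}. For every $n>16$, $S(n)$ equals the scaled integral over the fixed contour $\Hankel_a$. For each $n$ I split $\Hankel_a$ into the three disjoint pieces $\Hankel_a^{(0)}$, $\Hankel_a^{(1)}$, $\Hankel_a^{(2)}$ of \eqref{eq:Hankel-split}. Here $\delta>0$ is the small fixed constant furnished by Proposition~\ref{prop:intermediate-far-Hankel}, and I use the same $\delta$ in the definition of the split. For all sufficiently large $n$ we have $L_n=(\log n)^2<\delta n$, so the three pieces partition the contour. By additivity of the contour integral,
\[
 S(n)=\mathcal L_n+\mathcal M_n+\mathcal R_n,
\]
where $\mathcal M_n$ and $\mathcal R_n$ denote the contributions of the intermediate and far pieces, including the common prefactor $-\rho^{-n}/(mn)\cdot(2\pi i)^{-1}$.

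Proposition~\ref{prop:local-Hankel} gives
\[
 \mathcal L_n=-\frac{24B}{A^5}\sigma^{-n}n^{-5}+o(\sigma^{-n}n^{-5}),
\]
and Proposition~\ref{prop:intermediate-far-Hankel} gives $\mathcal M_n,\mathcal R_n=o(\sigma^{-n}n^{-5})$. Adding these yields
\[
 S(n)=-\frac{24B}{A^5}\sigma^{-n}n^{-5}+o(\sigma^{-n}n^{-5}).
\]
Asymptotic equivalence requires the leading constant to be nonzero. Since $B=-1/\pi$, the constant is $24/(\pi A^5)$, and this is positive because $A>0$. That positivity was proved exactly in \eqref{eq:constants-exact} using the rational bounds on $\pi$. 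Dividing through by $\frac{24}{\pi A^5}\sigma^{-n}n^{-5}$ then gives $S(n)\sim\frac{24}{\pi A^5}\sigma^{-n}n^{-5}$.

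The last equality is pure algebra. From \eqref{eq:constants-exact}, $A=(1280-405\pi)/(165\pi-512)$, so $A^{-5}=(165\pi-512)^5/(1280-405\pi)^5$. Substituting this into $24/(\pi A^5)$ gives the displayed closed form.

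The only point needing care is bookkeeping: all three propositions must use the same normalization and sign conventions for the integral. $\mathcal L_n$ is defined with the prefactor $-\rho^{-n}/(mn)$ and the orientation of \eqref{eq:scaled-Hankel}, and the other two pieces inherit the same prefactor and orientation. Once $\delta$ is fixed before the split, the three $o(\cdot)$ terms can be added with no further dependence on $n$. I expect no substantive obstacle beyond this check.
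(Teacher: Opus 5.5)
Your proposal is correct and is essentially the paper's own proof: the exact identity \eqref{eq:scaled-Hankel}, the exhaustive split \eqref{eq:Hankel-split}, Proposition~\ref{prop:local-Hankel} for the main term, Proposition~\ref{prop:intermediate-far-Hankel} for the negligible pieces, and $B=-1/\pi$. Your extra checks are harmless refinements that the paper leaves implicit: that $L_n<\delta n$ eventually, so the pieces genuinely partition $\Hankel_a$; that the same $\delta$ is used throughout; and that the constant $24/(\pi A^5)$ is nonzero because $A>0$.
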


\begin{proof}
The decomposition \eqref{eq:Hankel-split} of the exact contour identity
\eqref{eq:scaled-Hankel} is exhaustive.  Proposition~\ref{prop:local-Hankel}
gives the displayed main term on $\Hankel_a^{(0)}$, while
Proposition~\ref{prop:intermediate-far-Hankel} shows that the contributions
from $\Hankel_a^{(1)}$ and $\Hankel_a^{(2)}$ are
$o(\sigma^{-n}n^{-5})$.  Finally $B=-1/\pi$ by \eqref{eq:exact-Pq}.
This is precisely Theorem~\ref{thm:S-coeff-main}.
\end{proof}

In particular, there is $C_0>0$ such
that
\begin{equation}\label{eq:coefficient-bound}
 |S(n)|\le C_0\sigma^{-n}n^{-5}
\end{equation}
for all sufficiently large $n$.

\section{Global inversion and continuation across nonprincipal boundary points}
\label{sec:global-inversion}

The estimate \eqref{eq:coefficient-bound} implies that the series for $S$ and
$S'$ converge absolutely and uniformly on $|y|\le\sigma$, because
\[
 |S(n)|\sigma^n=O(n^{-5}),
 \qquad n|S(n)|\sigma^{n-1}=O(n^{-4}).
\]
Thus
\begin{equation}\label{eq:boundary-regularity}
 S,S'\text{ extend continuously to }|y|\le\sigma,
 \qquad \sup_{|y|\le\sigma}|S'(y)|<\infty.
\end{equation}
This conclusion has been obtained before any use of $\Delta$-analyticity.

Consider
\[
 \Phi^{-1}(\B_\sigma)=\{z\in\D:|\Phi(z)|<\sigma\}
\]
and define
\begin{equation}\label{eq:Omega-def}
 \Omega:=\text{the connected component containing $0$ of }
             \Phi^{-1}(\B_\sigma).
\end{equation}

\begin{definition}
A continuous map $f:X\to Y$ is \emph{proper} if $f^{-1}(K)$ is compact in
$X$ for every compact $K\subset Y$.
\end{definition}

\begin{proposition}[Properness]\label{prop:properness}
The map $\Phi:\Omega\to\B_\sigma$ is proper.
\end{proposition}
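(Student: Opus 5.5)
The plan is to prove properness by a sequential characterization. Let $K\subset\B_\sigma$ be compact, so $K\subset\{|y|\le\sigma'\}$ for some $\sigma'<\sigma$. Since $\Phi$ is continuous on $\Omega$, the set $\Phi^{-1}(K)\cap\Omega$ is closed in $\Omega$. It therefore suffices to show that any sequence $z_k\in\Omega$ with $|\Phi(z_k)|\le\sigma'$ has a subsequence converging to a point of $\Omega$. There are three ways this could fail, and each must be excluded: $|z_k|\to\infty$; $z_k$ accumulates at a point of the cut $[\rho,\infty)$; or $z_k$ accumulates at a point of $\partial\Omega\cap\D$.

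\textbf{Step 1: escape to infinity.} This is excluded by the uniform estimate \eqref{eq:Phi-infinity}. There is $M$ such that $|\Phi(z)|=|z||F(z)|^2\ge q_\infty>\sigma$ whenever $|z|\ge M$ and $z\in\D$. Hence $\{z\in\D:|\Phi(z)|<\sigma\}$ is contained in $|z|<M$, and the sequence is bounded.

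\textbf{Step 2: accumulation on the cut.} Suppose $z_k\to r\in[\rho,\infty)$ along a subsequence, approaching from the side $\pm\im z_k\ge0$ (pass to a further subsequence if needed). For $r>\rho$, the local branches $\widetilde F_\pm$ used in the proof of Proposition~\ref{prop:intermediate-far-Hankel} give $\Phi(z_k)\to rF_\pm(r)^2$. By Theorem~\ref{thm:cut-separation} this limit has modulus $>\sigma>\sigma'$, a contradiction. Real points $z_k>\rho$ do not occur, since those lie on the cut and not in $\D$.

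For $r=\rho$ we use the endpoint expansion \eqref{eq:F-endpoint}, which is valid uniformly in closed sectors. More simply, $F$ extends continuously to $\rho$ from $\D$, because \eqref{eq:exact-Pq} is continuous at $u=0$, as $q_0(u)\Log u\to0$. Hence $|\Phi(z_k)|\to\sigma>\sigma'$, again a contradiction. This endpoint case is the delicate point, and the continuity of $P+q_0\Log$ at $u=0$ is exactly what disposes of it.

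\textbf{Step 3: accumulation inside $\D$.} The remaining possibility is $z_k\to z_*\in\D$. By continuity $|\Phi(z_*)|\le\sigma'<\sigma$, so $z_*$ lies in the open set $\Phi^{-1}(\B_\sigma)$. Take a small disk about $z_*$ inside that open set. It meets $\Omega$, since it contains the $z_k$ for large $k$, and it is connected. Since $\Omega$ is a connected component, the disk lies in $\Omega$, so $z_*\in\Omega$.

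Combining the three steps, $\Phi^{-1}(K)\cap\Omega$ is sequentially compact, hence compact. The main obstacle is Step 2: it requires that $\Phi$ have genuine one-sided boundary limits uniformly in a neighbourhood of each cut point. These come from the connection formulae \eqref{eq:K-boundary}--\eqref{eq:E-boundary} for $r>\rho$ and from \eqref{eq:exact-Pq} at $r=\rho$, and the strict inequality of Theorem~\ref{thm:cut-separation} then does the rest.
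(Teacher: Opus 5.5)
Your proof is correct and follows the paper's argument step for step. You exclude escape to infinity via \eqref{eq:Phi-infinity}, exclude accumulation at interior cut points via the one-sided boundary limits and Theorem~\ref{thm:cut-separation}, exclude the endpoint $\rho$, and conclude with the maximality of the connected component; your endpoint treatment via the exact slit-disk representation \eqref{eq:exact-Pq}, rather than the sector-uniform \eqref{eq:F-endpoint} the paper cites, is slightly more careful, since it also covers sequences approaching $\rho$ tangentially to the cut.
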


\begin{proof}
Let $K\Subset\B_\sigma$.  Choose $\epsilon>0$ so that
$|y|\le\sigma-\epsilon$ on $K$, and take any sequence
$z_j\in\Phi^{-1}(K)\cap\Omega$.

The sequence cannot be unbounded.  Otherwise \eqref{eq:Phi-infinity} would
give $|\Phi(z_j)|\ge q_\infty>\sigma$ along a subsequence.  It cannot approach
an interior point $r>\rho$ of the cut.  Indeed, after passing to a subsequence
lying entirely in the upper or lower half-plane,
\eqref{eq:side-boundary-values} gives $F(z_j)\to F_\pm(r)$.  Consequently,
Theorem~\ref{thm:cut-separation} gives
\[
 |\Phi(z_j)|=|z_j|\,|F(z_j)|^2
 \longrightarrow r|F_\pm(r)|^2>\sigma.
\]
Nor can it approach the endpoint $\rho$, because \eqref{eq:F-endpoint} gives
$\Phi(z)\to\rho F_0^2=\sigma$.  Each possibility contradicts
$|\Phi(z_j)|\le\sigma-\epsilon$.

Hence the sequence remains in a compact subset of $\D$ and has a convergent
subsequence $z_{j_k}\to z_*\in\D$.  Continuity gives $\Phi(z_*)\in K$.  Since
$|\Phi(z_*)|<\sigma$, a small connected disk $V$ about $z_*$ lies in
$\Phi^{-1}(\B_\sigma)$.  For large $k$, $z_{j_k}\in V\cap\Omega$; maximality
of the connected component then gives $V\subset\Omega$.  Thus $z_*\in\Omega$,
proving compactness of the preimage.
\end{proof}

We shall also use the elementary fact that a proper continuous map between the
metric spaces in question is closed.  Indeed, if $C\subset\Omega$ is closed and
$y_j=\Phi(z_j)\in\Phi(C)$ converges to $y\in\B_\sigma$, then
$K=\{y\}\cup\{y_j:j\ge1\}$ is compact.  Properness gives a convergent
subsequence of $(z_j)$ with limit $z\in C$, and $\Phi(z)=y$.

\begin{theorem}[Biholomorphic inversion on the convergence disk]
\label{thm:biholo}
The map $\Phi:\Omega\to\B_\sigma$ is biholomorphic.  Its inverse
$X=\Phi^{-1}$ satisfies
\begin{equation}\label{eq:inverse-identities}
 X(y)=\frac{y}{S(y)^2},
 \qquad
 S(y)=F(X(y))=F\!\left(\frac{y}{S(y)^2}\right),
 \qquad |y|<\sigma.
\end{equation}
\end{theorem}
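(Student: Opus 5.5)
Since $\Phi:\Omega\to\B_\sigma$ is proper (Proposition~\ref{prop:properness}) and holomorphic, and $\Omega$, $\B_\sigma$ are connected, it is a finite branched covering: its image is open and, by the closedness noted after the proposition, also closed, hence all of $\B_\sigma$. The number of preimages counted with multiplicity is constant, equal to some degree $d\ge1$. The plan is to prove that $d=1$ and that there are no critical points. Then $\Phi$ is a bijective holomorphic map, hence biholomorphic.

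\emph{Identifying the inverse with the formal inverse.} Near $0$, $\Phi$ is locally invertible because $\Phi'(0)=1$, and its local inverse is the convergent germ $X(y)=yF(X(y))^{-2}$ of Section~\ref{sec:independent-coeff}. By \eqref{eq:coefficient-bound}, $S$ is holomorphic on $\B_\sigma$, and Lemma~\ref{lem:zero-free} shows $F\ne0$. Put $\widehat X(y):=y/S(y)^2$, defined wherever $S(y)\ne0$. On a small disk, $\widehat X=X$, since $S=F\circ X$ formally and $X=y/F(X)^2$. The aim is to continue the identities $\Phi(\widehat X(y))=y$ and $\widehat X(y)\in\Omega$ to all of $\B_\sigma$.

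The argument uses path lifting for the branched covering. Let $E\subset\B_\sigma$ be the finite set of critical values; it is finite because critical points in $\Omega$ are isolated and properness forces finitely many over compacta. The restriction $\Phi:\Omega\setminus\Phi^{-1}(E)\to\B_\sigma\setminus E$ is a covering map. The branch $X$ continues analytically along every path in $\B_\sigma\setminus E$ starting near $0$, staying in $\Omega$. The identity $S=F\circ X$ persists along such continuation, because $S$ is single-valued and holomorphic on $\B_\sigma$ and $F\circ X$ agrees with it near $0$. Hence $X=y/S(y)^2$ along every continuation, which is single-valued. Therefore the continuation of $X$ is single-valued on $\B_\sigma\setminus E$, and it is bounded near $E$ by properness, so it extends holomorphically across $E$. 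The result is a holomorphic right inverse $X:\B_\sigma\to\Omega$ with $\Phi\circ X=\mathrm{id}$ and $X=y/S^2$; in particular $S\ne0$ on $\B_\sigma$.

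\emph{Degree one.} The remaining step, which I expect to be the main obstacle, is to show that the lifted branch $X$ exhausts the fibres. Its image $X(\B_\sigma)$ is open, since $X$ is nonconstant and $\Phi\circ X=\mathrm{id}$ makes it injective. It is also closed in $\Omega$: if $X(y_j)\to z\in\Omega$, then $y_j=\Phi(X(y_j))\to\Phi(z)\in\B_\sigma$, so $z=X(\Phi(z))$ by continuity. Since $\Omega$ is connected, $X(\B_\sigma)=\Omega$, so $\Phi$ is injective on $\Omega$. An injective holomorphic map has nonvanishing derivative, so $X=\Phi^{-1}$ is biholomorphic, and \eqref{eq:inverse-identities} follows from $X=y/S^2$ and $S=F\circ X$.

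The delicate point is the single-valuedness of the continuation of $X$ around points of $E$. This is handled by the formula $X=y/S^2$, which requires knowing that $S$ does not vanish along the continuation. That follows from $S=F\circ X$ and Lemma~\ref{lem:zero-free}, which is why zero-freeness of $F$ on $\D$ is essential.
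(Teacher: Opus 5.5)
Your proof is correct, but it takes a noticeably longer route than the paper.

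\textbf{The paper's route.} The paper never continues the inverse. Since $\Phi(\Omega)\subset\B_\sigma$ by the definition of $\Omega$, the composite $S\circ\Phi$ is already holomorphic on the connected set $\Omega$. The identity theorem then gives $S(\Phi(z))=F(z)$ on all of $\Omega$ in one step. Injectivity is read off directly on the $z$-side: if $\Phi(z_1)=\Phi(z_2)=y$, then $F(z_1)=S(y)=F(z_2)\neq0$, so $z_1=y/S(y)^2=z_2$. Surjectivity comes from the open mapping theorem together with the closedness that follows from properness, exactly as in your first paragraph. Finally, univalence gives $\Phi'\neq0$.

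\textbf{Your route.} You prove the same identity $S=F\circ X$, but on the $y$-side. This forces you to construct $X$ by path lifting over $\B_\sigma\setminus E$, to obtain single-valuedness from $X=y/S^2$, to remove the singularities at $E$, and then to run the open--closed exhaustion $X(\B_\sigma)=\Omega$. Each step is valid. The essential inputs are the same as the paper's: zero-freeness of $F$ on $\D$ and the relation $\Phi(z)=zS(\Phi(z))^2$. The ``delicate point'' you flag, single-valuedness of the continued branch, does not arise in the paper. It works with $S\circ\Phi$ on $\Omega$ rather than with a continued branch of $\Phi^{-1}$.

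\textbf{A shortcut inside your own framework.} Your covering setup already settles the degree immediately. By Lemma~\ref{lem:zero-free}, $\Phi(z)=zF(z)^2$ vanishes on $\D$ only at $z=0$, and $\Phi'(0)=1$. So the fibre over $0$ is a single simple point. Constancy of the degree (counted with multiplicity) of a proper holomorphic map then gives $d=1$ and no critical points at all. Hence $E=\varnothing$, and the whole continuation and removable-singularity step is superfluous.

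\textbf{A small imprecision.} Your justification that $E$ is finite only shows that $E$ is locally finite in $\B_\sigma$. Properness bounds the number of critical values over each compact subset, and $\B_\sigma$ itself is not compact, so a priori they could accumulate at $|y|=\sigma$. Local finiteness is all your argument actually uses, so nothing breaks.
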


\begin{proof}
Since $\Phi'(0)=1$, $\Phi$ is nonconstant.  By the open mapping theorem
(see, e.g., \cite{Ahlfors}), $\Phi(\Omega)$ is open in $\B_\sigma$.
Properness and the preceding closed-map
argument make it closed.  It is nonempty and $\B_\sigma$ is connected; hence
\begin{equation}\label{eq:Phi-surjective}
 \Phi(\Omega)=\B_\sigma.
\end{equation}

The function $S(\Phi(z))$ is holomorphic on $\Omega$ and agrees near zero with
$F(z)$ by the formal identity \eqref{eq:implicit-intro}.  The identity theorem
gives
\begin{equation}\label{eq:S-Phi-F}
 S(\Phi(z))=F(z)\qquad(z\in\Omega).
\end{equation}
For any $z_1,z_2\in\Omega$ satisfying
$\Phi(z_1)=\Phi(z_2)=y$, identity \eqref{eq:S-Phi-F} gives
\[
 F(z_1)=S(y)=F(z_2)\ne0,
\]
where nonvanishing follows from Lemma~\ref{lem:zero-free} because
$z_1,z_2\in\Omega\subset\D$.
Consequently,
\[
 y=z_1F(z_1)^2=z_1S(y)^2,
 \qquad
 y=z_2F(z_2)^2=z_2S(y)^2.
\]
Since $S(y)\ne0$, it follows that $z_1=z_2$.  Hence
$\Phi|_\Omega$ is injective, and its unique preimage of $y$ is
\[
 z=\frac{y}{S(y)^2}.
\]
A univalent holomorphic function has nonzero derivative.  Its local
inverses consequently glue to a holomorphic global inverse, and
\eqref{eq:inverse-identities} follows from \eqref{eq:S-Phi-F}.
\end{proof}

Notice that $X(y)\in\D$ need not satisfy $|X(y)|<\rho$; $F(X(y))$ refers to the
cut-plane continuation, not necessarily direct substitution into the Taylor
series at zero.

\begin{proposition}[The boundary inverse]\label{prop:boundary-inverse}
Let $|y_0|=\sigma$ and $y_0\ne\sigma$.  Then
\begin{equation}\label{eq:boundary-X-limit}
 x_0:=\lim_{\substack{y\to y_0\\|y|<\sigma}}X(y)
\end{equation}
exists, belongs to $\D$, and satisfies
\begin{equation}\label{eq:boundary-X-identities}
 \Phi(x_0)=y_0,\qquad F(x_0)=S(y_0)\ne0,\qquad
 x_0=\frac{y_0}{S(y_0)^2}.
\end{equation}
Moreover, $\Phi'(x_0)\ne0$.
\end{proposition}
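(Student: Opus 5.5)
The strategy is to use the boundary regularity \eqref{eq:boundary-regularity}: $S$ extends continuously to $|y|\le\sigma$. The candidate limit is therefore $x_0:=y_0/S(y_0)^2$, and everything reduces to showing that $S(y_0)\ne0$ and that the limit point lies in $\D$ rather than on the cut.

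\textbf{Step 1 (the limit stays in a compact part of $\D$).} Take any sequence $y_j\to y_0$ with $|y_j|<\sigma$, and put $z_j=X(y_j)\in\Omega$. Then $|\Phi(z_j)|\to\sigma$. I will rule out the same degenerations as in Proposition~\ref{prop:properness}.
\begin{itemize}
\item Unboundedness is impossible, since \eqref{eq:Phi-infinity} would force $|\Phi(z_j)|\ge q_\infty>\sigma$.
\item Accumulation at a cut point $r>\rho$ is impossible by Theorem~\ref{thm:cut-separation}. Along a one-sided subsequence, $|\Phi(z_j)|\to r|F_\pm(r)|^2>\sigma$.
\item Accumulation at $\rho$ is the one new case. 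There \eqref{eq:F-endpoint} gives $\Phi(z_j)\to\sigma$, so $y_0=\lim\Phi(z_j)=\sigma$. This contradicts the hypothesis $y_0\ne\sigma$.
\end{itemize}
Hence every subsequence of $(z_j)$ has a further subsequence converging to some $x_*\in\D$. By continuity $\Phi(x_*)=y_0$, and $F(x_*)=\lim F(z_j)=\lim S(y_j)=S(y_0)$, using \eqref{eq:S-Phi-F} and the continuity of $S$ up to the circle.

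\textbf{Step 2 (uniqueness of the limit).} Lemma~\ref{lem:zero-free} gives $S(y_0)=F(x_*)\ne0$. Then $y_0=x_*F(x_*)^2=x_*S(y_0)^2$, so $x_*=y_0/S(y_0)^2$ is independent of the subsequence. Therefore the full limit \eqref{eq:boundary-X-limit} exists. Equivalently, one can argue directly: $X(y)=y/S(y)^2$ is continuous up to $y_0$ once $S(y_0)\ne0$. Either way, this establishes \eqref{eq:boundary-X-identities}.

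\textbf{Step 3 ($\Phi'(x_0)\ne0$), expected to be the main obstacle.} Differentiating $S(\Phi(z))=F(z)$ on $\Omega$ gives $S'(\Phi(z))\Phi'(z)=F'(z)$. Since $\Phi'=F^2+2zFF'$, we get
\[
 F'(z)\bigl(1-2zF(z)S'(\Phi(z))\bigr)=S'(\Phi(z))F(z)^2 .
\]
Let $z=X(y)\to x_0$. The bounded, continuous boundary values of $S'$ from \eqref{eq:boundary-regularity} let us pass to the limit. Suppose $\Phi'(x_0)=0$. Then $F(x_0)^2=-2x_0F(x_0)F'(x_0)$, so $F'(x_0)\ne0$. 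The chain relation $S'(y_0)\Phi'(x_0)=F'(x_0)$ then becomes $0=F'(x_0)\ne0$, because $S'(y_0)$ is finite. This is a contradiction.

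The delicate point is justifying that limit. Along $y\to y_0$ inside the disk we have $\Phi'(X(y))=F'(X(y))/S'(y)$, with $F'$ continuous at $x_0\in\D$ and $S'$ bounded. It must be checked that $S'(y)$ stays bounded while $F'(X(y))\to F'(x_0)\ne0$. That forces $|\Phi'(X(y))|\ge |F'(x_0)|/(2\sup|S'|)$ near $y_0$, hence $\Phi'(x_0)\ne0$ by continuity. I would present this bounded-quotient form, and treat the degenerate case $F'(x_0)=0$ separately: there $\Phi'(x_0)=F(x_0)^2\ne0$ directly.
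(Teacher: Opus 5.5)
Your proposal is correct and follows the paper's proof essentially step for step. The same three exclusions (unboundedness, accumulation at interior cut points, accumulation at $\rho$) give relative compactness in $\D$, and zero-freeness identifies every accumulation point as $y_0/S(y_0)^2$. Finally, $\Phi'(x_0)\ne0$ follows from the boundary chain relation $F'(x_0)=S'(y_0)\Phi'(x_0)$ combined with $\Phi'=F^2+2xFF'$. Your bounded-quotient justification of that limit needs only $\sup|S'|<\infty$ rather than continuity of $S'$ up to the circle, which is a harmless refinement of the paper's direct passage to the boundary.
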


\begin{proof}
Take $y_j\to y_0$ with $|y_j|<\sigma$ and put $x_j=X(y_j)$.  The same
separation arguments as in Proposition~\ref{prop:properness} show that $(x_j)$ is
relatively compact in $\D$: unboundedness contradicts
\eqref{eq:Phi-infinity}, approach to an interior point of the cut contradicts
Theorem~\ref{thm:cut-separation}, and approach to $\rho$ would force
$y_0=\sigma$.

If a subsequence tends to $\xi\in\D$, then $\Phi(\xi)=y_0$.  By
\eqref{eq:boundary-regularity} and \eqref{eq:S-Phi-F},
\[
 F(\xi)=\lim F(x_j)=\lim S(y_j)=S(y_0)\ne0.
\]
It follows that
$\xi=y_0/S(y_0)^2$.  Every accumulation point is the same, so the full
sequence converges and \eqref{eq:boundary-X-identities} holds.

Differentiate \eqref{eq:S-Phi-F} in $\Omega$ and pass to the boundary:
\[
 F'(x_0)=S'(y_0)\Phi'(x_0).
\]
If $\Phi'(x_0)=0$, then $F'(x_0)=0$.  But
\[
 \Phi'(x)=F(x)^2+2xF(x)F'(x),
\]
so $\Phi'(x_0)=F(x_0)^2\ne0$, a contradiction.
\end{proof}

\begin{theorem}[Continuation through the nonprincipal circle]
\label{thm:nonprincipal-continuation}
For every $y_0$ with $|y_0|=\sigma$ and $y_0\ne\sigma$, $S$ extends
holomorphically to a neighborhood of $y_0$.
\end{theorem}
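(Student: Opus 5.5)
The plan is to use the local inverse function theorem at the boundary preimage $x_0$ supplied by Proposition~\ref{prop:boundary-inverse}, and to glue it to $X$ on the disk. Fix $y_0$ with $|y_0|=\sigma$, $y_0\ne\sigma$, and let $x_0\in\D$ be the limit in \eqref{eq:boundary-X-limit}. Since $\Phi$ is holomorphic on $\D$ and $\Phi'(x_0)\ne0$, the holomorphic inverse function theorem gives open sets $W\ni x_0$ with $W\Subset\D$ and $V\ni y_0$ such that $\Phi:W\to V$ is biholomorphic. Call the inverse $\psi:V\to W$. Shrinking $V$ to a disk about $y_0$ makes $V\cap\B_\sigma$ connected, in fact convex.

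Next I would show that $\psi=X$ on $V\cap\B_\sigma$. By \eqref{eq:boundary-X-limit}, $X(y)\to x_0$ as $y\to y_0$ within $\B_\sigma$. Hence, after shrinking $V$ further, $X(y)\in W$ for all $y\in V\cap\B_\sigma$. For such $y$ both $X(y)$ and $\psi(y)$ lie in $W$ and satisfy $\Phi(\cdot)=y$, and injectivity of $\Phi|_W$ gives $X(y)=\psi(y)$. (Alternatively, the two maps agree on a nonempty open subset near $y_0$, and the identity theorem on the connected set $V\cap\B_\sigma$ gives agreement everywhere.) Shrinking $V$ is legitimate because the limit statement gives a radius $r$ with $X(B(y_0,r)\cap\B_\sigma)\subset W$.

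Finally, define $\widehat S:=F\circ\psi$ on $V$. This is holomorphic because $\psi(V)=W\subset\D$ and $F\in\mathcal O(\D)$ by Lemma~\ref{lem:stieltjes}. On $V\cap\B_\sigma$, \eqref{eq:inverse-identities} gives $\widehat S(y)=F(X(y))=S(y)$. So $S$ and $\widehat S$ glue to a holomorphic function on $\B_\sigma\cup V$, which is the required extension to a neighborhood of $y_0$. As a byproduct, $\widehat S$ is nonzero on $V$ by Lemma~\ref{lem:zero-free}, and the relation $\psi(y)=y/\widehat S(y)^2$ persists.

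The only delicate point is making sure the local inverse branch is the one continuing $X$ and not another preimage of $y$. This is handled entirely by the existence of the single limit $x_0$, together with $\Phi'(x_0)\ne0$, both provided by Proposition~\ref{prop:boundary-inverse}. The heavy lifting, namely the separation from the cut and from $\rho$ that keeps $x_0$ inside $\D$, has already been done there. I therefore expect no real obstacle beyond careful bookkeeping of the shrinking of $V$.
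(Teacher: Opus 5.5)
Your proposal is correct and is essentially the paper's own proof. Both arguments take the local holomorphic inverse $\psi$ of $\Phi$ near the boundary preimage $x_0$ from Proposition~\ref{prop:boundary-inverse}, shrink the neighborhood of $y_0$ so that $X(y)$ lands in the injectivity neighborhood, identify $X=\psi$ there by injectivity of $\Phi$, and define the continuation as $F\circ\psi$; the differences are cosmetic (your $V,W$ are the paper's $W,V$, and you spell out the gluing with $S$ on $\B_\sigma$).
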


\begin{proof}
By Proposition~\ref{prop:boundary-inverse} and the holomorphic inverse function theorem,
there are neighborhoods $V\ni x_0$, $W\ni y_0$ and a holomorphic inverse
$\psi:W\to V$ of $\Phi|_V$.  Shrink $W$ so that the boundary convergence of
$X$ puts $X(y)$ in $V$ for $y\in W\cap\B_\sigma$.  Since
$\Phi(X(y))=y=\Phi(\psi(y))$ and $\Phi|_V$ is injective,
$X(y)=\psi(y)$ on this overlap.  Therefore
\[
 \widetilde S(y):=F(\psi(y)),\qquad y\in W,
\]
is the required continuation.
\end{proof}

The coefficient asymptotic gives
$\limsup_{n\to\infty}|S(n)|^{1/n}=\sigma^{-1}$, so the radius of convergence
is exactly $\sigma$.  By Theorem~\ref{thm:nonprincipal-continuation}, $\sigma$ is the
unique singularity on that circle.  Compactness also gives the following
uniform form: for every $\epsilon>0$, the arc
\begin{equation}\label{eq:Kepsilon}
 K_\epsilon=\{y:|y|=\sigma,\ |y-\sigma|\ge\epsilon\}
\end{equation}
has a neighborhood to which $S$ extends holomorphically.

\section{A genuine $\Delta$-analytic continuation of $S$}
\label{sec:delta-continuation}

\subsection{Local coordinates and the logarithmic perturbation}

Near $z=\rho$ and $y=\sigma$ put
\begin{equation}\label{eq:uv-coordinates}
 u=1-\frac z\rho,\qquad v=1-\frac y\sigma.
\end{equation}
For $0<\vartheta<\pi$ and $\delta>0$, define the slit sector
\begin{equation}\label{eq:sector-def}
 \mathcal S(\delta,\vartheta)
 =\{w:0<|w|<\delta,\ |\Arg w|<\pi-\vartheta\}.
\end{equation}
Let
\begin{equation}\label{eq:fg-local}
 f(u):=F(\rho(1-u)),
 \qquad
 g(u):=1-\frac{\Phi(\rho(1-u))}{\sigma}
      =1-(1-u)\frac{f(u)^2}{F_0^2}.
\end{equation}
The inverse problem is exactly $v=g(u)$.

\begin{lemma}[Local expansion of $g$]\label{lem:g-expansion}
In every fixed sector $\mathcal S(\delta_0,\vartheta_0)$,
\begin{equation}\label{eq:g-expansion}
 g(u)=g_0(u)+D_4u^4\Log u
 +O\bigl(u^5(1+|\Log u|)\bigr),
\end{equation}
where
\begin{equation}\label{eq:g0-D4}
 g_0(u)=1-(1-u)\frac{P(u)^2}{F_0^2},
 \qquad D_4=-\frac{2B}{F_0},
\end{equation}
and $g_0(0)=0$, $g_0'(0)=A$.
\end{lemma}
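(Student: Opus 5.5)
The plan is to substitute the endpoint expansion \eqref{eq:F-endpoint} of $f(u)=F(\rho(1-u))$ into the definition \eqref{eq:fg-local} of $g$ and square it. In the form \eqref{eq:exact-Pq} we have $f=P+q_0\Log u$ with $q_0(u)=Bu^4+O(u^5)$ and $P,q_0$ holomorphic for $|u|<r_0$. Squaring gives
\[
 f(u)^2=P(u)^2+2P(u)q_0(u)\Log u+q_0(u)^2(\Log u)^2 .
\]
Multiplying by $-(1-u)/F_0^2$ and adding $1$, we obtain $g=g_0+R_1+R_2$, where $g_0=1-(1-u)P^2/F_0^2$ is holomorphic at $0$. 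The remaining pieces are
\[
 R_1=-\frac{2(1-u)P(u)q_0(u)}{F_0^2}\Log u,\qquad
 R_2=-\frac{(1-u)q_0(u)^2}{F_0^2}(\Log u)^2 .
\]

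For $R_1$, I expand the holomorphic coefficient $-2(1-u)P(u)q_0(u)/F_0^2$ at $u=0$. By \eqref{eq:P-data} its leading term is $-2F_0Bu^4/F_0^2=-(2B/F_0)u^4=D_4u^4$, and the remainder is $O(u^5)$. Hence $R_1=D_4u^4\Log u+O(u^5|\Log u|)$.

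For $R_2$, the bound is $O(|u|^8|\Log u|^2)$. In a fixed sector with $|u|<\delta_0$, the estimate $|u|^3|\Log u|\le C$ (since $|\Log u|\le|\log|u||+\pi$) turns this into $O(u^5|\Log u|)$. Both error terms are then absorbed into $O\bigl(u^5(1+|\Log u|)\bigr)$. Uniformity over the sector follows because every coefficient function is holomorphic, hence bounded, on a disk of radius less than $r_0$, and the only sector-dependent quantity is $|\Arg u|\le\pi$, which is bounded trivially.

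It remains to compute the value and slope of $g_0$ at $0$. Directly, $g_0(0)=1-P(0)^2/F_0^2=0$. Differentiating,
\[
 g_0'(u)=\frac{P(u)^2}{F_0^2}-\frac{2(1-u)P(u)P'(u)}{F_0^2},
\]
so at $u=0$, using \eqref{eq:P-data},
\[
 g_0'(0)=1+\frac{2\rho F'(\rho)}{F_0}=A
\]
by \eqref{eq:constants}.

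The only point that needs care is the justification that $P$ and $q_0$ are genuinely holomorphic with $q_0=Bu^4+O(u^5)$, rather than merely satisfying the asymptotic form \eqref{eq:F-endpoint}. This is supplied by \eqref{eq:exact-Pq}, which comes from the connection formula together with the cancellation of the $u^2\Log u$ and $u^3\Log u$ terms recorded in Appendix~\ref{app:logcoeff}. I would invoke it directly. With that in hand, the rest is routine bookkeeping.
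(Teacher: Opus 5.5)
Your proof is correct and follows the paper's argument: square the endpoint expansion of $f$, read off $D_4=-2B/F_0$ from $P(0)=F_0$, absorb the remaining terms using the boundedness of $|u|^3|\Log u|$, and obtain $g_0'(0)=1-2P'(0)/F_0=A$ from \eqref{eq:P-data}. The only difference is that you square the exact representation \eqref{eq:exact-Pq} rather than the asymptotic form \eqref{eq:F-endpoint} with an error term $E(u)$. This removes the cross terms involving $E$ and gives the bound on the whole slit disk, but it is a cosmetic simplification of the same argument.
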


\begin{proof}
Write \eqref{eq:F-endpoint} as
$f(u)=P(u)+Bu^4\Log u+E(u)$ with
$E(u)=O(u^5(1+|\Log u|))$.  Squaring gives
\begin{align*}
 f(u)^2={}&P(u)^2+2P(u)Bu^4\Log u+2P(u)E(u)
 +B^2u^8(\Log u)^2\\
 &+2Bu^4\Log u\,E(u)+E(u)^2.
\end{align*}
Since $P(u)=F_0+O(u)$, the second term is
$2F_0Bu^4\Log u+O(u^5|\Log u|)$; all remaining error terms have the stated
order (in particular $u^8(\Log u)^2=O(u^5)$).  Substitution into
\eqref{eq:fg-local}, using $1-u=1+O(u)$, proves
\eqref{eq:g-expansion}--\eqref{eq:g0-D4}.  Finally,
\[
 g_0'(0)=1-\frac{2P'(0)}{F_0}
 =1+\frac{2\rho F'(\rho)}{F_0}=A.
\]
\end{proof}

\subsection{A logarithmically perturbed sectorial inverse}

\begin{lemma}[Sectorial inversion]\label{lem:sectorial-inverse}
Let $0<\vartheta_0<\vartheta_1<\pi/2$.  There exist
$0<\delta_1<\delta_0$ and $\epsilon>0$ such that, for every
$v\in\mathcal S(\epsilon,\vartheta_1)$, the equation $g(u)=v$ has exactly one
solution $u=U(v)$ in $\mathcal S(\delta_1,\vartheta_0)$.  The function $U$ is
holomorphic and there is a polynomial $H_4$ of degree at most four such that
\begin{equation}\label{eq:U-expansion}
 U(v)=H_4(v)-\frac{D_4}{A^5}v^4\Log v
 +O\bigl(v^5(1+|\Log v|)\bigr),
\end{equation}
where $H_4(0)=0$ and $H_4'(0)=A^{-1}$.  The estimate is uniform on every fixed
smaller closed sector.
\end{lemma}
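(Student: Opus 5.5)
The plan is to treat $g$ as a small perturbation of the holomorphic map $g_0$, which is locally biholomorphic because $g_0'(0)=A\ne0$. First let $U_0:=g_0^{-1}$ be the holomorphic inverse near $0$. Then $U_0(v)=A^{-1}v+O(v^2)$, and $U_0$ maps $\mathcal S(\epsilon,\vartheta_1)$ into a region whose arguments differ from $\Arg v$ by $O(|v|)$. Since $\vartheta_0<\vartheta_1$, shrinking $\epsilon$ places $U_0(\mathcal S(\epsilon,\vartheta_1))$ inside $\mathcal S(\delta_1,\vartheta')$ for some intermediate angle $\vartheta_0<\vartheta'<\vartheta_1$, at a definite angular distance from the boundary of $\mathcal S(\delta_1,\vartheta_0)$.

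\emph{Existence and uniqueness.} I would use Rouch\'e's theorem, or equivalently a contraction argument in the variable $w=g_0(u)$. Write $g=g_0+\kappa$ with
\[
\kappa(u)=D_4u^4\Log u+O\bigl(u^5(1+|\Log u|)\bigr),
\]
holomorphic in the slit sector by Lemma~\ref{lem:g-expansion}. For fixed $v$, compare $g(u)-v$ with $g_0(u)-v$ on a small circle $|u-U_0(v)|=c|v|$. This circle lies in $\mathcal S(\delta_1,\vartheta_0)$ for small $c$, by the angular margin above. On it $|g_0(u)-v|\ge \tfrac{A}{2}c|v|$, while $|\kappa(u)|=O(|v|^4\log(1/|v|))$ is smaller. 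Rouch\'e then gives exactly one zero inside the circle.

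To exclude other solutions in the whole sector $\mathcal S(\delta_1,\vartheta_0)$, I would use the bound $|g(u)-g_0(u)|=o(|u|)$ together with univalence of $g_0$ on a small disk. Any solution $u$ satisfies $g_0(u)=v-\kappa(u)$, so $|u|\asymp|v|$ and $u=U_0(v-\kappa(u))$. A short derivative estimate for $\kappa$ shows this fixed-point map is a contraction on the relevant set. The bound is $\kappa'(u)=O(|u|^3\log(1/|u|))$, which follows from Cauchy estimates on disks of radius $\asymp|u|$ inside a slightly larger sector $\mathcal S(\delta_0,\vartheta_0/2)$; this is why $\delta_1<\delta_0$ and the sector nesting are needed. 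Holomorphy of $U$ then follows from the implicit function theorem, since $g'(U(v))=A+o(1)\ne0$, or from the integral formula $\frac{1}{2\pi i}\oint u\,g'(u)/(g(u)-v)\,du$.

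\emph{Expansion.} Writing $U=U_0(v-\kappa(U))$, Taylor-expand $U_0$:
\[
U=U_0(v)-U_0'(v)\kappa(U)+O(|\kappa|^2).
\]
Since $U=v/A+O(v^2)$, we get
\[
U^4\Log U=A^{-4}v^4\bigl(\Log v-\log A\bigr)+O\bigl(v^5(1+|\Log v|)\bigr).
\]
Here $\Log(U/v)=-\log A+O(v)$ uses the fact that both $U$ and $v$ lie in sectors avoiding the negative axis, so the principal branches combine additively without a $2\pi i$ jump. Hence
\[
U(v)=U_0(v)-A^{-1}D_4A^{-4}v^4\Log v+A^{-5}D_4\log A\,v^4+O\bigl(v^5(1+|\Log v|)\bigr).
\]
Take $H_4$ to be the degree-four Taylor polynomial of $U_0$ plus the $v^4\log A$ term. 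The term $O(|\kappa|^2)=O(v^8\log^2)$ is absorbed. Then $H_4(0)=0$, $H_4'(0)=A^{-1}$, and the coefficient of $v^4\Log v$ is $-D_4/A^5$. Uniformity on smaller closed sectors comes from the uniformity of Lemma~\ref{lem:g-expansion} on fixed sectors.

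\emph{Main obstacle.} I expect the delicate step to be the global uniqueness in $\mathcal S(\delta_1,\vartheta_0)$ together with the derivative bound on $\kappa$ near the sector edge. The derivative bound needs Cauchy estimates, so the expansion must hold on a strictly larger sector. I would derive it by working in $\mathcal S(\delta_0,\vartheta_0/2)$ and restricting, first checking that Lemma~\ref{lem:g-expansion} permits this, which it does since it holds in every fixed sector.
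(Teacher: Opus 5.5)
Your proposal is correct and takes essentially the same route as the paper. You perturb around the holomorphic inverse $h=g_0^{-1}$, count roots by Rouch\'e on a disk about $h(v)$, and use $u=h(v-r(u))$ to localize every solution in the sector. Holomorphy then comes from the inverse function theorem together with a Cauchy estimate on $r'$, and the $v^4\Log v$ coefficient comes from a Taylor expansion of the inverse relation.

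The only difference is that the paper does not need your contraction step. The Lipschitz bound on $h$ in $u=h(v-r(u))$ already gives $|u-h(v)|\le C_2|v|^4(1+|\Log v|)$ for every solution, so every solution lies inside a Rouch\'e circle of radius $M|v|^4(1+|\Log v|)$ with $M>C_2$. This gives global uniqueness directly, and it would equally place all solutions inside your circle of radius $c|v|$.
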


\begin{proof}
We keep track of every smallness condition and make one final choice of
$\epsilon$ at the end.

\paragraph{Fixed local data.}
Since $g_0'(0)=A\ne0$, the ordinary inverse function theorem gives disks
$D(0,R_u)$ and $D(0,R_h)$ on which $g_0$ has a holomorphic inverse $h$, with
\begin{equation}\label{eq:h-expansion}
 g_0(h(v))=v,\qquad h(v)=\frac vA+O(v^2).
\end{equation}
Shrink $\delta_0$ once, if necessary, so that
$\overline{D(0,\delta_0)}\subset D(0,R_u)$,
$|g_0'(u)-A|\le A/4$ for $|u|\le\delta_0$, and the expansion in
Lemma~\ref{lem:g-expansion} has a uniform bound
\begin{equation}\label{eq:r-uniform}
 r(u):=g(u)-g_0(u),\qquad
 |r(u)|\le C|u|^4(1+|\Log u|)
 \quad(u\in\mathcal S(\delta_0,\vartheta_0)).
\end{equation}
This is the only shrinking of the outer $u$-sector.

Put
\[
 \vartheta_c:=\frac{\vartheta_0+\vartheta_1}{2},
 \qquad \gamma:=\vartheta_c-\vartheta_0
       =\frac{\vartheta_1-\vartheta_0}{2}>0,
\]
and fix
\begin{equation}\label{eq:q-geometry}
 0<q<\min\left\{\frac12,\sin\gamma\right\}.
\end{equation}
Because $h(v)/(v/A)\to1$ uniformly as $v\to0$, there are
$\epsilon_h>0$ and constants $c_-,c_+>0$ such that, for every
$v\in\mathcal S(\epsilon_h,\vartheta_1)$,
\begin{equation}\label{eq:h-sector-map}
 c_-|v|\le|h(v)|\le c_+|v|,\qquad
 |h(v)|<\frac{\delta_0}{4},\qquad
 |\Arg h(v)-\Arg v|<\gamma.
\end{equation}
In particular, $h(v)\in\mathcal S(\delta_0/4,\vartheta_c)$ and its angular
distance from each boundary ray of the outer sector is at least $\gamma$.

The elementary geometry used below is now uniform.  If $u_0=h(v)$ and
$0<r\le q|u_0|$, then
\begin{equation}\label{eq:relative-disk-in-sector}
 \overline{D(u_0,r)}\subset\mathcal S(\delta_0,\vartheta_0).
\end{equation}
Indeed, points in the relative disk have modulus between
$(1-q)|u_0|$ and $(1+q)|u_0|$, while their arguments differ from
$\Arg u_0$ by at most $\arcsin q<\gamma$.  This concerns the closed disk, not
merely its boundary circle, as required by Rouch\'e's theorem.

\paragraph{Every local solution has the scale $|u|\asymp|v|$.}
From \eqref{eq:g-expansion} and $g_0(u)=Au+O(u^2)$, uniformly in the outer
sector, $g(u)=Au+o(u)$.  Choose $0<\delta_1<\delta_0/4$ so small that
\begin{equation}\label{eq:g-linear-two-sided}
 \frac A2|u|\le|g(u)|\le\frac{3A}{2}|u|
 \qquad(u\in\mathcal S(\delta_1,\vartheta_0)),
\end{equation}
and so that $g_0(D(0,\delta_1))\subset D(0,R_h/2)$.  Consequently every
solution $g(u)=v$ in this local sector satisfies
\begin{equation}\label{eq:any-root-scale}
 \frac{2}{3A}|v|\le|u|\le\frac2A|v|.
\end{equation}
Thus any family of such solutions tends to zero with $v$; this is a
consequence of the equation, not of its position relative to the Rouch\'e
circle.

Choose $\epsilon_{\mathrm{root}}$ so that
\[
 0<\epsilon_{\mathrm{root}}<R_h/2.
\]
On the convex disk $D(0,R_h/2)$ the derivative $h'$ is bounded.  If $g(u)=v$ with
$u\in\mathcal S(\delta_1,\vartheta_0)$ and
$|v|<\epsilon_{\mathrm{root}}$, then
\[
 g_0(u)=v-r(u),\qquad u=h(v-r(u)).
\]
Using \eqref{eq:any-root-scale}, \eqref{eq:r-uniform}, and the boundedness of
$h'$, we obtain a constant $C_2$, independent of $u,v$, such that
\begin{equation}\label{eq:all-roots-near-h}
 |u-h(v)|\le C_2|v|^4(1+|\Log v|).
\end{equation}
Here $1+|\Log u|\asymp1+|\Log v|$ follows from
\eqref{eq:any-root-scale} and the fixed angular bounds.

\paragraph{The Rouch\'e disk.}
Assume temporarily that a disk centered at $u_0=h(v)$ has relative radius at
most $q|u_0|$.  The bounds \eqref{eq:h-sector-map} then imply, throughout that
disk,
\[
 (1-q)c_-|v|\le|u|\le(1+q)c_+|v|,
 \qquad 1+|\Log u|\asymp1+|\Log v|.
\]
Thus \eqref{eq:r-uniform} gives a constant $C_1$, independent of $v$, for
which
\begin{equation}\label{eq:r-on-relative-disk}
 |r(u)|\le C_1|v|^4(1+|\Log v|).
\end{equation}
Now fix, once and for all,
\begin{equation}\label{eq:M-final-choice}
 M>\max\left\{\frac{4C_1}{3A},C_2\right\},
\end{equation}
and put
\begin{equation}\label{eq:Rouche-circle}
 r_v=M|v|^4(1+|\Log v|),\qquad |u-u_0|=r_v.
\end{equation}
Since $|u_0|\ge c_-|v|$, there exists $\epsilon_R>0$ such that, for every
$0<|v|<\epsilon_R$,
\begin{equation}\label{eq:epsilon-R-condition}
 \frac{r_v}{|u_0|}
 \le\frac{M}{c_-}|v|^3(1+|\Log v|)<q.
\end{equation}
The limit $x^3(1+|\log x|)\to0$ proves the existence of this single, uniform
$\epsilon_R$.  Equations \eqref{eq:relative-disk-in-sector} and
\eqref{eq:epsilon-R-condition} show that the whole closed Rouch\'e disk lies in
the outer sector.
Choose also $\epsilon_\delta>0$ so that
\begin{equation}\label{eq:epsilon-delta-condition}
 (1+q)c_+\epsilon_\delta<\delta_1.
\end{equation}
Then the closed Rouch\'e disk is contained in
$\mathcal S(\delta_1,\vartheta_0)$ whenever
$|v|<\min\{\epsilon_R,\epsilon_\delta\}$.

On its boundary,
\[
 g_0(u)-v=(u-u_0)\int_0^1g_0'(u_0+t(u-u_0))\dd t,
\]
and therefore
\begin{equation}\label{eq:Rouche-g0}
 |g_0(u)-v|\ge\frac{3A}{4}r_v.
\end{equation}
By \eqref{eq:r-on-relative-disk} and \eqref{eq:M-final-choice},
$|r(u)|<|g_0(u)-v|$.  The function $g_0$ is univalent on the chosen inverse
neighborhood, so $g_0(u)-v$ has exactly one zero, $u_0=h(v)$, in the disk.
Rouch\'e's theorem therefore gives exactly one zero of $g(u)-v$ there; denote
it by $U(v)$.  Conversely, \eqref{eq:all-roots-near-h} and $M>C_2$ show that
every solution in $\mathcal S(\delta_1,\vartheta_0)$ lies strictly inside the
same disk.  Hence $U(v)$ is the unique local solution claimed in the lemma,
not merely the unique zero in a preselected moving disk.  Moreover,
\begin{equation}\label{eq:U-h-rough}
 U(v)-h(v)=O\bigl(v^4(1+|\Log v|)\bigr).
\end{equation}

\paragraph{Holomorphy.}
The strict inequalities in \eqref{eq:q-geometry} leave a positive angular
margin between the Rouch\'e disks and the boundary of the outer sector.  Hence
there are a fixed $\kappa>0$ and $\epsilon_{\mathrm{margin}}>0$ such that
$D(U(v),\kappa|U(v)|)$ remains in the outer sector whenever
$0<|v|<\epsilon_{\mathrm{margin}}$.  Cauchy's estimate applied to
\eqref{eq:r-uniform} gives
\[
 r'(U(v))=O\bigl(|U(v)|^3(1+|\Log U(v)|)\bigr).
\]
There is therefore $\epsilon_{\mathrm{der}}>0$ such that
$|g'(U(v))-A|<A/2$ whenever $0<|v|<\epsilon_{\mathrm{der}}$.
At any fixed $v_*$ the inverse function theorem produces a local holomorphic
solution $\psi$.  After shrinking its neighborhood, $\psi$ stays in
$\mathcal S(\delta_1,\vartheta_0)$; the local uniqueness just proved forces
$\psi=U$.  Thus $U$ is holomorphic throughout the sector.

\paragraph{The logarithmic term.}
Put $\Delta(v)=U(v)-h(v)$.  From
$g(U(v))=g_0(h(v))$ and \eqref{eq:U-h-rough}, Taylor expansion gives
\begin{equation}\label{eq:Delta-equation}
 g_0'(h(v))\Delta(v)+D_4h(v)^4\Log h(v)
 =O\bigl(v^5(1+|\Log v|)\bigr).
\end{equation}
There is $\epsilon_{\mathrm{exp}}>0$ such that, for
$0<|v|<\epsilon_{\mathrm{exp}}$, the quotients
$h(v)/(v/A)$ and $U(v)/(v/A)$ remain in a fixed disk about $1$ not meeting the
negative real axis.  This fixes the logarithm branches uniformly.  Replacing
$U$ by $h$ in $U^4\Log U$ then introduces a smaller error, and the quadratic
Taylor remainder in $\Delta$ is also absorbed.  From \eqref{eq:h-expansion},
\[
 g_0'(h(v))=A+O(v),\qquad
 \Log h(v)=\Log v-\log A+O(v),
\]
so
\[
 h(v)^4\Log h(v)=A^{-4}v^4\Log v+c_4v^4
 +O\bigl(v^5(1+|\Log v|)\bigr).
\]
Equation \eqref{eq:Delta-equation} therefore yields
\[
 \Delta(v)=-\frac{D_4}{A^5}v^4\Log v+d_4v^4
 +O\bigl(v^5(1+|\Log v|)\bigr).
\]
Absorbing $d_4v^4$ into the fourth Taylor polynomial of $h$ proves
\eqref{eq:U-expansion}.

\paragraph{One final value of $\epsilon$.}
No further implicit shrinking is needed.  Take
\begin{equation}\label{eq:sectorial-final-epsilon}
 \epsilon:=\min\left\{
  \epsilon_h,\epsilon_{\mathrm{root}},\epsilon_R,\epsilon_\delta,
  \epsilon_{\mathrm{margin}},\epsilon_{\mathrm{der}},
  \epsilon_{\mathrm{exp}},\frac{R_h}{2},\frac12
 \right\}.
\end{equation}
Each threshold on the right was chosen after
$\vartheta_0,\vartheta_1,\delta_0,q$ and $M$ had been fixed, and is independent
of $v$.  Hence every estimate above holds simultaneously and uniformly for all
$v\in\mathcal S(\epsilon,\vartheta_1)$.  On any fixed smaller closed sector,
the same bounds are uniform up to its angular boundary, which proves the final
uniformity assertion.
\end{proof}

\subsection{Construction of the standard dented domain}

For $\epsilon>0$ and $0<\phi<\pi/2$, define
\begin{equation}\label{eq:Delta-domain-def}
 \Delta(\sigma,\epsilon,\phi)
 =\{y\in\C:|y|<\sigma+\epsilon,\ y\ne\sigma,
            |\Arg(y-\sigma)|>\phi\}.
\end{equation}
Because $y-\sigma=-\sigma v$, the local condition is equivalent to
$|\Arg v|<\pi-\phi$.

Apply Lemma~\ref{lem:sectorial-inverse}.  Near $y=\sigma$, define
\begin{equation}\label{eq:Xloc-Sloc}
 X_{\mathrm{loc}}(y)=\rho\left(1-U\!\left(1-\frac y\sigma\right)\right),
 \qquad S_{\mathrm{loc}}(y)=F(X_{\mathrm{loc}}(y)).
\end{equation}
Then $\Phi(X_{\mathrm{loc}}(y))=y$ on a local dented sector $L$.

For real $y<\sigma$ sufficiently close to $\sigma$, put
$v=1-y/\sigma>0$.  The expansion
\[
 U(v)=\frac vA+O(v^2)+O(v^4|\log v|)=\frac vA+o(v)
\]
and $A>0$ show explicitly that $U(v)>0$.  Hence
$X_{\mathrm{loc}}(y)=\rho(1-U(v))<\rho$ and tends to $\rho$.

We now verify, rather than assume, that the inverse $X$ coming from the origin
has the same real limit.  For $0<y<\sigma$, the combinatorial coefficients of
$S$ are nonnegative, so $S(y)>0$ and
\[
 X(y)=\frac{y}{S(y)^2}>0.
\]
For $0<x<\rho$, the Taylor coefficients of $F$ are nonnegative and
$f_0=f_1=1$, whence $F(x)>0$ and $F'(x)>0$.  Therefore
\begin{equation}\label{eq:Phi-real-positive}
 \Phi'(x)=F(x)^2+2xF(x)F'(x)>0\qquad(0<x<\rho).
\end{equation}
Moreover, $\Phi(0)=0$ and the endpoint expansion gives
$\Phi(x)\to\rho F_0^2=\sigma$ as $x\uparrow\rho$.  Thus $\Phi$ maps
$[0,\rho)$ strictly increasingly onto $[0,\sigma)$, and its unique positive
inverse satisfies
\begin{equation}\label{eq:X-real-limit}
 X(y)\uparrow\rho\qquad(y\uparrow\sigma).
\end{equation}
For real $y<\sigma$ sufficiently close to $\sigma$, the preceding estimates
also give $0<X_{\mathrm{loc}}(y)<\rho$ and
$\Phi(X_{\mathrm{loc}}(y))=y$.  Since \eqref{eq:Phi-real-positive} shows that
$\Phi$ maps $[0,\rho)$ strictly increasingly onto $[0,\sigma)$,
$X_{\mathrm{loc}}(y)$ is the unique positive preimage of $y$.  Hence
$X_{\mathrm{loc}}(y)=X(y)$ and $S_{\mathrm{loc}}(y)=S(y)$ there.  The identity
theorem extends equality to the relevant connected overlap with $\B_\sigma$.

\begin{theorem}[$\Delta$-analyticity of $S$]\label{thm:S-delta}
There exist $\epsilon>0$ and $0<\phi<\pi/2$ such that $S$ has a unique
single-valued holomorphic continuation to
$\Delta(\sigma,\epsilon,\phi)$.
\end{theorem}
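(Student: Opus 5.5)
We glue three holomorphic pieces: the original series on $\B_\sigma$, the continuations across nonprincipal boundary points from Theorem~\ref{thm:nonprincipal-continuation}, and the local branch $S_{\mathrm{loc}}$ of \eqref{eq:Xloc-Sloc} near $\sigma$.

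\emph{Local piece.} Fix $0<\vartheta_0<\vartheta_1<\pi/2$ in Lemma~\ref{lem:sectorial-inverse} and take $\phi:=\vartheta_1$. Let $\epsilon_{\mathrm{loc}}$ be the $\epsilon$ of that lemma. Then $S_{\mathrm{loc}}$ is holomorphic on the local dented set
\[
 L=\{y: 0<|y-\sigma|<\sigma\epsilon_{\mathrm{loc}},\ |\Arg(y-\sigma)|>\phi\}.
\]
Because $y-\sigma=-\sigma v$, this set is exactly the image of the $v$-sector $\mathcal S(\epsilon_{\mathrm{loc}},\vartheta_1)$, and the image is connected. We already know $S_{\mathrm{loc}}=S$ on real $y$ slightly below $\sigma$. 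The intersection $L\cap\B_\sigma$ is connected for $\epsilon_{\mathrm{loc}}$ small, since it is a sector of a small disk cut by a circle through the vertex at angle $>\phi$. The identity theorem then gives $S_{\mathrm{loc}}=S$ on all of $L\cap\B_\sigma$.

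\emph{Arc pieces.} Set $r_0:=\sigma\epsilon_{\mathrm{loc}}/2$. By compactness of $K_{r_0}$ in \eqref{eq:Kepsilon}, cover it by finitely many disks $W_1,\dots,W_N$, each centred on the circle. On each $W_j$ the construction of Theorem~\ref{thm:nonprincipal-continuation} gives a continuation $S_j=F\circ\psi_j$ that agrees with $S$ on $W_j\cap\B_\sigma$. We shrink the disks so that each $W_j\cap\B_\sigma$ is connected (a disk centred on the circle of radius small relative to $\sigma$ has this property).

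\emph{Single-valuedness.} This is the main obstacle. Two pieces with overlapping domains must agree on the whole overlap, not just on the part inside $\B_\sigma$. I would take the disks in a chain ordered by argument, with consecutive overlaps $W_j\cap W_{j+1}$ convex and hence connected, and meeting $\B_\sigma$ in a nonempty open set. On that open set both $S_j$ and $S_{j+1}$ equal $S$, so the identity theorem forces agreement on the entire convex overlap. For nonconsecutive disks, we shrink the radii so that only neighbours intersect; this is possible because the centres can be spaced along the arc. The two extreme disks, near angles $\pm\arccos(1-r_0^2/(2\sigma^2))$, must also overlap $L$. There the overlap of a disk with the dented sector is connected and meets $\B_\sigma$, so $S_j=S_{\mathrm{loc}}$ on it.

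\emph{Domain and uniqueness.} We now define the continuation on
\[
 U:=\B_\sigma\cup L\cup\bigcup_j W_j
\]
by the corresponding piece; the overlap agreements make it well defined. No monodromy issue arises, because every pairwise overlap is connected and touches $\B_\sigma$, where all pieces coincide with the original series. The remaining step is geometric: show that $U\supset\Delta(\sigma,\epsilon,\phi)$ for some small $\epsilon>0$. Away from $\sigma$, the union $\bigcup_j W_j$ contains a uniform annulus neighbourhood of $K_{r_0}$. Near $\sigma$, the set $L$ covers $\{|y|<\sigma+\epsilon,\ |y-\sigma|<2r_0,\ |\Arg(y-\sigma)|>\phi\}$ once $\epsilon$ is small. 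The seam at $|y-\sigma|\approx r_0$ to $2r_0$ is covered by the extreme disks together with $L$. Uniqueness follows from the identity theorem, since $\Delta$ is connected and contains $\B_\sigma$'s nonempty interior.
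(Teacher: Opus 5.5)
Your proposal is correct and follows the paper's proof essentially step for step. It uses the same three-piece cover $\B_\sigma\cup N\cup L$ with the arc $K$ taken at half the local radius, the same radial-projection argument for $\Delta(\sigma,\epsilon,\phi)\subset\B_\sigma\cup N\cup L$, and agreement on convex pairwise overlaps meeting $\B_\sigma$; your direct gluing step is the alternative to the disk-chain/monodromy argument that the paper itself notes in the closing remark of Appendix~\ref{app:monodromy}.

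Two details are handled more simply in the paper. Forcing only neighbouring disks to meet is unnecessary, because strict convexity of $\B_\sigma$ makes any two intersecting circle-centred disks overlap inside $\B_\sigma$ (Lemma~\ref{lem:disk-disk}). You should also shrink every disk into $A_\phi$, so that each disk meeting $L$ (not only the two extreme ones) has a convex overlap with $L$ that meets $\B_\sigma$ (Lemma~\ref{lem:disk-local}).
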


\begin{proof}
Let $\epsilon_y>0$ be the radius of the local sector and set
\begin{equation}\label{eq:L-local}
 L=\{y:0<|y-\sigma|<\epsilon_y,
          |\Arg(y-\sigma)|>\phi\}.
\end{equation}
The function $S_{\mathrm{loc}}$ is holomorphic on $L$.  Put
\[
 K=\{y:|y|=\sigma,\ |y-\sigma|\ge\epsilon_y/2\}.
\]
By Theorem~\ref{thm:nonprincipal-continuation}, an open neighborhood $N\supset K$
carries a holomorphic continuation of $S$.  Since $K$ is compact,
\[
 d:=\dist(K,\C\setminus N)>0.
\]
Choose
\begin{equation}\label{eq:epsilon-choice}
 0<\epsilon<\min\{d/2,\epsilon_y/4\}.
\end{equation}
Then
\begin{equation}\label{eq:Delta-cover}
 \Delta(\sigma,\epsilon,\phi)\subset\B_\sigma\cup N\cup L.
\end{equation}
Indeed, consider a point in the $\Delta$-domain with
$\sigma\le|y|<\sigma+\epsilon$.  If $|y-\sigma|<\epsilon_y$, it lies in $L$.
Otherwise let $y^*=\sigma y/|y|$.  Then
$|y-y^*|=|y|-\sigma<\epsilon$, while
\[
 |y^*-\sigma|\ge|y-\sigma|-|y-y^*|
 >\epsilon_y-\epsilon>\epsilon_y/2.
\]
Thus $y^*\in K$ and $\dist(y,K)<d$, so $y\in N$.

The local functions on $\B_\sigma$, $N$, and $L$ are analytic continuations of
the same germ.  The only remaining issue is single-valued gluing.  In
Appendix~\ref{app:monodromy} we prove explicitly that the domain in
\eqref{eq:Delta-domain-def} is star-shaped with respect to zero and hence
simply connected; we construct, along every path from zero, a finite chain of
open disks on which adjacent function elements agree.  The monodromy theorem
then gives a unique single-valued continuation on the whole domain.
\end{proof}

\section{Exact singular expansion of $S$ at $y=\sigma$}
\label{sec:S-singular}

Put $v=1-y/\sigma$.  From \eqref{eq:g0-D4} and
\eqref{eq:U-expansion}, the coefficient of $v^4\Log v$ in $U(v)$ is
\begin{equation}\label{eq:U-log-coeff}
 -\frac{D_4}{A^5}=\frac{2B}{F_0A^5}.
\end{equation}
Thus for a polynomial $H_4$ of degree at most four,
\begin{equation}\label{eq:U-final}
 U(v)=H_4(v)+Ev^4\Log v
 +O\bigl(v^5(1+|\Log v|)\bigr),
 \qquad E=\frac{2B}{F_0A^5},
\end{equation}
with $H_4(0)=0$, $H_4'(0)=A^{-1}$.

By \eqref{eq:F-endpoint},
\begin{equation}\label{eq:S-compose-U}
 S(y)=P(U(v))+BU(v)^4\Log U(v)
 +O\bigl(U(v)^5(1+|\Log U(v)|)\bigr).
\end{equation}
Since $U(v)=v/A+O(v^2)+O(v^4\Log v)$,
\begin{equation}\label{eq:Log-U}
 U(v)=O(v),\qquad \Log U(v)=\Log v-\log A+O(v),
\end{equation}
and the final remainder in \eqref{eq:S-compose-U} is
$O(v^5(1+|\Log v|))$.

The analytic part contributes
\begin{equation}\label{eq:P-U-expand}
 P(U(v))=P(H_4(v))+P'(0)Ev^4\Log v
 +O\bigl(v^5(1+|\Log v|)\bigr),
\end{equation}
whereas
\begin{equation}\label{eq:U4LogU}
 U(v)^4\Log U(v)=A^{-4}v^4\Log v+c_*v^4
 +O\bigl(v^5(1+|\Log v|)\bigr).
\end{equation}
The total logarithmic coefficient is therefore
\begin{align}
 P'(0)E+BA^{-4}
 &=-\rho F'(\rho)\frac{2B}{F_0A^5}+\frac{B}{A^4}\notag\\
 &=\frac{B}{A^5}\left(A-\frac{2\rho F'(\rho)}{F_0}\right)
 =\frac{B}{A^5}=-\frac1{\pi A^5}.
 \label{eq:total-log-coeff}
\end{align}
This calculation is essential: the logarithmic term in the inverse $U$ passes
through the analytic part $P$ and contributes at the same order.  The final
coefficient is not obtained by the linear replacement $u\sim v/A$ alone.

\begin{theorem}[Full leading singular expansion]\label{thm:S-singular}
There exists a polynomial $Q_4$ of degree at most four such that, uniformly as
$y\to\sigma$ in every fixed smaller $\Delta$-sector,
\begin{align}
 S(y)={}&Q_4\!\left(1-\frac y\sigma\right)
 -\frac1{\pi A^5}\left(1-\frac y\sigma\right)^4
   \Log\!\left(1-\frac y\sigma\right)\notag\\
 &+O\!\left(\left(1-\frac y\sigma\right)^5
 \left[1+\left|\Log\!\left(1-\frac y\sigma\right)\right|\right]\right).
 \label{eq:S-singular-full}
\end{align}
Moreover,
\begin{equation}\label{eq:Q4-data}
 Q_4(0)=F_0,
 \qquad Q_4'(0)=-\frac{\rho F'(\rho)}{A}.
\end{equation}
In particular, $y=\sigma$ is a genuine logarithmic branch point.
\end{theorem}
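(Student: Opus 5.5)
The plan is to assemble the expansion from the pieces already computed just before the statement. By Theorem~\ref{thm:S-delta} and the construction in \eqref{eq:Xloc-Sloc}, on the local dented sector we have $S(y)=S_{\mathrm{loc}}(y)=F(\rho(1-U(v)))=f(U(v))$ with $v=1-y/\sigma$. Lemma~\ref{lem:sectorial-inverse} places $U(v)$ in a fixed sector $\mathcal S(\delta_1,\vartheta_0)$, so \eqref{eq:F-endpoint} applies uniformly at $u=U(v)$ and gives \eqref{eq:S-compose-U}. Fix a smaller closed $\Delta$-sector, that is, a closed $v$-sector $|\Arg v|\le\pi-\phi'$ with $\phi'>\phi$. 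Choose $\vartheta_1<\phi'$ and $\vartheta_0<\vartheta_1$ so that all the uniformity statements of Lemma~\ref{lem:sectorial-inverse} hold there.

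\textbf{Expanding the three terms.} First, by \eqref{eq:U-final} we have $U=O(v)$, and since $U(v)/(v/A)$ stays in a disk around $1$ avoiding $(-\infty,0]$, the relation \eqref{eq:Log-U} holds with consistent principal branches. Hence the remainder $O(U^5(1+|\Log U|))$ is $O(v^5(1+|\Log v|))$.

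Second, for $P(U(v))$, write $U=H_4+\Delta_1$ with $\Delta_1=Ev^4\Log v+O(v^5(1+|\Log v|))$. Taylor's theorem for the holomorphic function $P$ gives
\[
P(U)=P(H_4)+P'(H_4)\Delta_1+O(\Delta_1^2).
\]
Using $P'(H_4)=P'(0)+O(v)$ and $\Delta_1^2=O(v^8|\Log v|^2)$, this yields \eqref{eq:P-U-expand}. Then $P(H_4(v))$ is holomorphic at $0$; its Taylor polynomial of degree four is the analytic part, and its tail is $O(v^5)$.

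Third, $U^4=H_4^4+O(v^7|\Log v|)$ and $H_4^4=A^{-4}v^4+O(v^5)$. Combined with $\Log U=\Log v-\log A+O(v)$, this gives \eqref{eq:U4LogU}. The constant $c_*v^4=-A^{-4}\log A\,v^4$ is absorbed into the polynomial.

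\textbf{Collecting.} Define $Q_4$ as the degree-four Taylor polynomial of $P(H_4(v))$ plus $Bc_*v^4$. The logarithmic coefficient is then given by \eqref{eq:total-log-coeff}, namely $B/A^5=-1/(\pi A^5)$, which proves \eqref{eq:S-singular-full}. For \eqref{eq:Q4-data}, all non-polynomial terms are $O(v^4|\Log v|)$, so $Q_4(0)=P(H_4(0))=P(0)=F_0$ and $Q_4'(0)=P'(0)H_4'(0)=-\rho F'(\rho)/A$, using \eqref{eq:P-data} and $H_4'(0)=A^{-1}$.

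\textbf{Genuine branch point.} If $S$ were holomorphic at $\sigma$, then $S(y)-Q_4(v)$ would be holomorphic in $v$ and equal to $-\frac{1}{\pi A^5}v^4\Log v+O(v^5|\Log v|)$. Comparing the two sides of the sector along rays $v=re^{\pm i(\pi-\phi')}$, the jump of $\Log v$ produces a difference of order $r^4$ that cannot be matched by the Taylor terms of a holomorphic function, which are $O(r^4)$ with equal ray values up to phase. A cleaner route is to observe that analyticity at $\sigma$ would give a radius of convergence larger than $\sigma$, contradicting Theorem~\ref{thm:S-coeff-main}.

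\textbf{Main obstacle.} The hardest step is the bookkeeping of uniform branches and remainders in the composition: the $O(\cdot)$ in \eqref{eq:F-endpoint} holds only in sectors, so one must check that $U(v)$ stays a fixed angular distance inside the admissible $u$-sector uniformly over the chosen closed $v$-sector. I would handle this directly from $U(v)=(v/A)(1+o(1))$ and $A>0$.
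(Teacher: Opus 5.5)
Your proposal is correct and follows the paper's proof essentially step for step. You write $S=f(U(v))$ on the local sector, expand $P(U)$ about $H_4$ and expand $U^4\Log U$ via \eqref{eq:U-final} and \eqref{eq:Log-U}, absorb all analytic terms of degree at most four (including $Bc_*v^4$ with $c_*=-A^{-4}\log A$) into $Q_4$, and read off the logarithmic coefficient $P'(0)E+BA^{-4}=B/A^5$ together with $Q_4(0)=F_0$ and $Q_4'(0)=P'(0)/A$. For the branch-point claim, which the paper leaves implicit, your radius-of-convergence argument is sound, as is the simpler observation that a holomorphic $h=O(v^4|\Log v|)$ has $h/v^4$ bounded whereas $\Log v$ is not. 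Your two-ray comparison, by contrast, is only a loose sketch, because $v^4$ itself takes different phases on the two rays.
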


\begin{proof}
Equations \eqref{eq:S-compose-U}--\eqref{eq:total-log-coeff} identify all
nonanalytic terms through degree four.  Absorb the fourth Taylor polynomial of
$P(H_4(v))$, the analytic term $Bc_*v^4$, and all other analytic terms of
degree at most four into $Q_4(v)$.  Every remaining term has order
$O(v^5(1+|\Log v|))$.  The constant term is $P(0)=F_0$.  Since
$U(v)=v/A+O(v^2)$, the linear term is $P'(0)v/A$, which gives
\eqref{eq:Q4-data}.
\end{proof}

For $n\ge5$, direct expansion gives the exact identity
\begin{equation}\label{eq:transfer-log-exact}
 [z^n](1-z)^4\Log(1-z)
 =-\frac{24}{n(n-1)(n-2)(n-3)(n-4)}\sim-24n^{-5}.
\end{equation}
The $\Delta$-domain transfer theorem
\cite{FlajoletOdlyzko,FlajoletSedgewick} applied to
\eqref{eq:S-singular-full} therefore reproduces
\eqref{eq:S-coeff-main}.  This is an a posteriori consistency check.  The
coefficient estimate used to prove boundary regularity was the independent
Lagrange--Hankel proof, so there is no circularity.

\section{From $S$ to the canonical generating function $S_3^{[4]}$}
\label{sec:canonical}

Recall that
\begin{equation}\label{eq:canonical-recall}
 S_3^{[4]}(z)=(1-z)G(\vartheta(z)),
 \qquad G(y)=S(y)-1-y,
 \qquad
 \vartheta(z)=\frac{z^6}{(1-z)^2(1-z^2+z^6)}.
\end{equation}
Let $\eta$ be the positive solution of $\vartheta(\eta)=\sigma$.

\subsection{The unique dominant preimage}

\begin{lemma}\label{lem:eta-unique}
The equation $\vartheta(x)=\sigma$ has a unique solution in $(0,1/2)$,
namely
\begin{equation}\label{eq:eta-values}
 \eta=0.49340718057613087519\ldots,
 \qquad
 \vartheta'(\eta)=1.25142977847290709269\ldots>0.
\end{equation}
Moreover, for $|z|\le\eta$,
\begin{equation}\label{eq:theta-strict}
 |\vartheta(z)|\le\sigma,
\end{equation}
with equality if and only if $z=\eta$.
\end{lemma}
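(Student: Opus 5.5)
The plan splits into two parts: the real equation on $(0,1/2)$, and the modulus comparison on the closed disk $|z|\le\eta$.

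\emph{Real part.} On $[0,1/2]$ write
\[
 \vartheta(x)=\frac{x^6}{(1-x)^2(1-x^2+x^6)} .
\]
The denominator factor $1-x^2+x^6\ge 1-x^2>0$ is positive there. Logarithmic differentiation gives
\[
 \frac{\vartheta'(x)}{\vartheta(x)}=\frac6x+\frac{2}{1-x}-\frac{-2x+6x^5}{1-x^2+x^6}.
\]
For $0<x\le1/2$ we have $-2x+6x^5<0$, so every term is positive. Hence $\vartheta$ is strictly increasing on $(0,1/2]$, with $\vartheta(0)=0$. Next I will compute
\[
 \vartheta(1/2)=\frac{(1/64)}{(1/4)(1-1/4+1/64)}=\frac{4}{49}\approx0.0816>\sigma .
\]
This inequality can be certified rationally from the exact formula for $\sigma$ together with $\pi<22/7$. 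The intermediate value theorem then gives existence and uniqueness of $\eta\in(0,1/2)$. The decimal values of $\eta$ and $\vartheta'(\eta)$ come from the interval Newton certification described in Section~2. Positivity of $\vartheta'(\eta)$ already follows from the logarithmic derivative above.

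\emph{Modulus comparison.} For $|z|=r\le\eta$, the triangle inequality gives $|1-z|\ge 1-r$. The factor $1-z^2+z^6$ is where the work lies. Expanding $\vartheta$ as a power series in $z$, one would like nonnegative coefficients, so that $|\vartheta(z)|\le\vartheta(r)$. This fails here, because $1/(1-z^2+z^6)$ has negative coefficients (its $z^6$ term, for instance). I would therefore estimate directly:
\[
 |1-z^2+z^6|\ge 1-|z|^2-\dots
\]
is too weak in the same way. Instead I will use the identity
\[
 (1-z)^2(1-z^2+z^6)=1-2z+2z^3-z^4-2z^7+z^8,
\]
already recorded in the introduction. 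I then want to show that $1/\bigl(1-2z+2z^3-z^4-2z^7+z^8\bigr)$ has nonnegative Taylor coefficients, equivalently that $(1-z)^{-2}(1-z^2+z^6)^{-1}$ does. Since $(1-z)^{-1}(1-z^2+z^6)^{-1}=\sum c_k z^k$ with partial-sum structure, one route is to check that
\[
 \frac{1}{(1-z)(1-z^2+z^6)}=\frac{1+z}{(1-z^2)(1-z^2+z^6)}\cdot\frac{1}{1+z}\cdot(1+z)
\]
has nonnegative coefficients by a positivity argument on $1/\bigl((1-w)(1-w+w^3)\bigr)$ with $w=z^2$, where $1/(1-w+w^3)$ has eventually sign-changing coefficients. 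Because that is fragile, my first attempt will instead be a direct strict inequality on the circle $|z|=\eta$. By the maximum principle, $|\vartheta|$ on the disk is controlled by its values on $|z|=\eta$ (note $\vartheta$ is holomorphic there). So it suffices to show $|\vartheta(\eta e^{i\theta})|<\vartheta(\eta)$ for $\theta\ne0$.

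\emph{Circle bound.} Write $|\vartheta(\eta e^{i\theta})|^{-1}\vartheta(\eta)$ as the ratio
\[
 \frac{|1-\eta e^{i\theta}|^2}{(1-\eta)^2}\cdot\frac{|1-\eta^2e^{2i\theta}+\eta^6e^{6i\theta}|}{1-\eta^2+\eta^6}.
\]
The first factor equals $1+2\eta(1-\cos\theta)/(1-\eta)^2$ and grows like $1+c(1-\cos\theta)$ with $c=2\eta/(1-\eta)^2\approx3.9$. For the second factor, $|1-\eta^2e^{2i\theta}+\eta^6e^{6i\theta}|^2$ is a trigonometric polynomial in $\theta$. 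Its deficit below $(1-\eta^2+\eta^6)^2$ is at most of size $\eta^2(1-\cos2\theta)+O(\eta^6)$, which is about $0.24\cdot2(1-\cos\theta)(1+\cos\theta)$ plus small terms. I will show the product exceeds $1$ for $\theta\ne0$ by bounding everything in terms of $s=1-\cos\theta\in(0,2]$: expand both squared moduli as polynomials in $\cos\theta$ and verify a one-variable polynomial inequality in $s$ with certified rational enclosures of $\eta$. This gives strict inequality on the circle off $\theta=0$.

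\emph{Interior and main obstacle.} For interior points, the maximum principle gives $|\vartheta(z)|<\sigma$ when $|z|<\eta$, since $\vartheta$ is nonconstant. Together with the circle bound, equality occurs only at $z=\eta$. The main obstacle is this polynomial inequality in $s$. If it is delicate near $s=2$ (that is, $z=-\eta$), I will handle that range separately by crude bounds: there $|1-z|\ge 1+\eta\cos\ldots$ is already large.
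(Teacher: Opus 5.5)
Your treatment of the real equation is the paper's argument: $\vartheta'/\vartheta>0$ because $1-3x^4>0$, $\vartheta(1/2)=4/49$, and $\sigma<4/49$ reduces to $\pi<896/285$, which follows from $\pi<22/7$. Your reduction of the modulus claim is also the paper's: maximum modulus on $|z|=\eta$, $|1-z|\ge 1-\eta$ with equality only at $z=\eta$, and a separate comparison for $|1-z^2+z^6|$. The gap is that last comparison, which carries the whole content of the second assertion. You do not prove it. You defer it to a certified polynomial inequality in $s=1-\cos\theta$, which you set up as a contest between the growth of $|1-z|$ and an expected ``deficit'' of $|1-z^2+z^6|$.

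That expectation has the wrong sign: on the circle there is no deficit at all. Write $z=\eta e^{it}$, $w=z^2=\eta^2e^{2it}$, $\alpha=2t$, $x=\cos\alpha$. Expanding
\[
 |1-w+w^3|^2=1+\eta^4+\eta^{12}-2\eta^2\cos\alpha+2\eta^6\cos3\alpha-2\eta^8\cos2\alpha
\]
and using $1-\cos3\alpha=(1-x)(1+2x)^2$ and $1-\cos2\alpha=2(1-x)(1+x)$ gives the exact identity
\[
 |1-z^2+z^6|^2-(1-\eta^2+\eta^6)^2
 =2\eta^2(1-x)\bigl[1-\eta^4(2x+1)^2+2\eta^6(1+x)\bigr].
\]
Since $(2x+1)^2\le 9$, the bracket is at least $1-9\eta^4>7/16$, using only $\eta<1/2$. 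Hence $|1-z^2+z^6|\ge 1-\eta^2+\eta^6$ on $|z|=\eta$. Combined with $|1-z|>1-\eta$ for $z\ne\eta$, this gives $|\vartheta(z)|\le\sigma$ on the circle, with equality only at $z=\eta$. No trade-off between the factors and no numerical enclosure of $\eta$ are needed.

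The product inequality you intended to check is true with room to spare, so your route is not doomed. As written, however, the second assertion is only planned, not proved, and this identity is the missing idea that makes it a short exact argument.

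Two smaller points. First, before invoking the maximum principle, justify that $\vartheta$ is holomorphic on $|z|\le\eta$; the paper uses $|z^2-z^6|<1$ for $|z|<1/2$. Second, your aside on Taylor coefficients is inaccurate. The $z^6$ coefficient of $1/(1-z^2+z^6)$ is $0$, and the first negative one is at $z^8$. The real obstruction to coefficient positivity is that $1-z^2+z^6$ vanishes at points of modulus about $0.93<1$. This is moot, since you abandon that route.
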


\begin{proof}
For $0<x\le1/2$ the denominator of $\vartheta$ is positive and
\begin{equation}\label{eq:theta-log-derivative}
 \frac{\vartheta'(x)}{\vartheta(x)}
 =\frac6x+\frac2{1-x}+\frac{2x-6x^5}{1-x^2+x^6}>0,
\end{equation}
because $1-3x^4>0$.  Thus $\vartheta$ is strictly increasing.  Since
$\vartheta(1/2)=4/49$, it remains to compare this value with $\sigma$ without
using decimals.  Since $165\pi-512>0$, \eqref{eq:constants-exact} gives
\begin{align*}
 \sigma<\frac4{49}
 &\Longleftrightarrow
 49(165\pi-512)^2<225\pi^2\\
 &\Longleftrightarrow
 7(165\pi-512)<15\pi
 \Longleftrightarrow
 \pi<\frac{896}{285}.
\end{align*}
The last inequality follows from
$\pi<22/7<896/285$.  Hence the positive solution exists uniquely and lies
below $1/2$.  Directed interval evaluation gives the certified enclosures
\[
 0.49340718057613087519<\eta
 <0.49340718057613087520,
\]
and
\[
 1.25142977847290709269<\vartheta'(\eta)
 <1.25142977847290709270,
\]
which justify the rounded values in \eqref{eq:eta-values}.

For $|z|\le\eta<1/2$, the denominator is nonzero because
$|z^2-z^6|<1$.  By the maximum-modulus principle it suffices to take
$z=\eta e^{it}$.  Clearly $|1-z|\ge1-\eta$, with equality only at $t=0$.
Put $x=\cos2t$.  A direct calculation gives
\begin{align}
 |1-z^2+z^6|^2-(1-\eta^2+\eta^6)^2
 =2\eta^2(1-x)\bigl[1+2\eta^6(1+x)
 -\eta^4(4x^2+4x+1)\bigr].\label{eq:denominator-difference}
\end{align}
For $-1\le x\le1$ and $\eta<1/2$, the bracket is bounded below by
$1-9\eta^4>1-9/16>0$.  Hence
$|1-z^2+z^6|\ge1-\eta^2+\eta^6$.  Combining both denominator estimates gives
$|\vartheta(z)|\le\vartheta(\eta)=\sigma$, and equality can occur only at
$z=\eta$.
\end{proof}

\subsection{$\Delta$-analyticity and the local expansion of $S_3^{[4]}$}

Set
\begin{equation}\label{eq:Lambda}
 \Lambda:=\frac{\eta\vartheta'(\eta)}{\sigma}>0,
 \qquad t:=1-\frac z\eta.
\end{equation}
Taylor expansion gives
\begin{equation}\label{eq:theta-local}
 1-\frac{\vartheta(z)}\sigma=\Lambda t+O(t^2).
\end{equation}

We give the complete compactness argument that produces a standard
$\Delta$-domain.  Choose parameters $\epsilon_S>0$ and
$0<\phi<\pi/2$ for the continuation of $S$ furnished by
Theorem~\ref{thm:S-delta}, and then fix
\begin{equation}\label{eq:canonical-angle-choice}
 \phi<\psi<\frac\pi2.
\end{equation}
Put
\[
 w(z):=1-\frac{\vartheta(z)}\sigma.
\]
Equation \eqref{eq:theta-local} says
$w(z)=\Lambda t(1+O(t))$, with $\Lambda>0$.  Hence there is $\tau_0>0$ such
that, whenever
\begin{equation}\label{eq:canonical-local-z-sector}
 0<|t|<\tau_0,\qquad |\Arg t|<\pi-\psi,
\end{equation}
we have
\[
 |\Arg w(z)-\Arg t|<\psi-\phi,
 \qquad |\vartheta(z)-\sigma|<\epsilon_S/2.
\]
After decreasing $\tau_0$ if necessary, $\vartheta$ has no pole there and
$|\vartheta(z)|<\sigma+\epsilon_S$.  Since
$z-\eta=-\eta t$, condition
$|\Arg(z-\eta)|>\psi$ is equivalent to the angular condition in
\eqref{eq:canonical-local-z-sector}.  It follows that the local dented sector
\begin{equation}\label{eq:canonical-local-map}
 0<|z-\eta|<r_0:=\eta\tau_0,\qquad
 |\Arg(z-\eta)|>\psi
\end{equation}
is mapped by $\vartheta$ into
$\Delta(\sigma,\epsilon_S,\phi)$.

It remains to control points away from $\eta$.  The compact set
\begin{equation}\label{eq:canonical-compact-K}
 K_\eta:=\{z:|z|\le\eta,\ |z-\eta|\ge r_0/2\}
\end{equation}
does not contain $\eta$.  By \eqref{eq:theta-strict} and compactness, there is
$\delta_\eta>0$ such that
\begin{equation}\label{eq:canonical-compact-margin}
 |\vartheta(z)|\le\sigma-2\delta_\eta
 \qquad(z\in K_\eta).
\end{equation}
Continuity gives an open neighborhood $N_\eta\supset K_\eta$ on which
$|\vartheta(z)|<\sigma-\delta_\eta$.  Put
\[
 d_\eta:=\dist(K_\eta,\C\setminus N_\eta)>0.
\]
Also choose $e_{\mathrm{pole}}>0$ so that $\vartheta$ is holomorphic on
$|z|<\eta+e_{\mathrm{pole}}$.  Finally take
\begin{equation}\label{eq:canonical-epsilon-choice}
 0<\epsilon_1<\min\left\{
 d_\eta,\frac{r_0}{4},e_{\mathrm{pole}}
 \right\}.
\end{equation}

We claim that every point of $\Delta(\eta,\epsilon_1,\psi)$ either belongs to
the local region \eqref{eq:canonical-local-map} or to $N_\eta$.  Indeed, let
$z$ be outside the local region.  Then $|z-\eta|\ge r_0$.  If $|z|\le\eta$,
then $z\in K_\eta\subset N_\eta$.  If
$\eta<|z|<\eta+\epsilon_1$, put
$z^*=\eta z/|z|$.  We have
\[
 |z-z^*|=|z|-\eta<\epsilon_1,
 \qquad
 |z^*-\eta|\ge |z-\eta|-|z-z^*|
 >r_0-\epsilon_1>\frac{r_0}{2}.
\]
Thus $z^*\in K_\eta$ and $\dist(z,K_\eta)<\epsilon_1<d_\eta$, whence
$z\in N_\eta$.  On $N_\eta$ the image lies in $\B_\sigma$, while the local
region maps into the already constructed $\Delta$-domain of $G$.  The two
branches agree on their overlap because both continue the same power-series
germ.  Therefore $G(\vartheta(z))$, and hence $S_3^{[4]}(z)$, is single-valued
and holomorphic on the standard domain
$\Delta(\eta,\epsilon_1,\psi)$.

Using
\begin{equation}\label{eq:compose-log}
 (\Lambda t+O(t^2))^4\Log(\Lambda t+O(t^2))
 =\Lambda^4t^4\Log t+c_\Lambda t^4
 +O\bigl(t^5(1+|\Log t|)\bigr),
\end{equation}
we obtain the strengthened bridge to the canonical series.

\begin{theorem}[Canonical local expansion]\label{thm:canonical-expansion}
There exist $\epsilon_1>0$, $0<\psi<\pi/2$, and a polynomial
$\widetilde Q_4$ of degree at most four such that $S_3^{[4]}$ is holomorphic on
$\Delta(\eta,\epsilon_1,\psi)$ and
\begin{align}
 S_3^{[4]}(z)={}&\widetilde Q_4\!\left(1-\frac z\eta\right)
 -\kappa\left(1-\frac z\eta\right)^4
       \Log\!\left(1-\frac z\eta\right)\notag\\
 &+O\!\left(\left(1-\frac z\eta\right)^5
 \left[1+\left|\Log\!\left(1-\frac z\eta\right)\right|\right]\right),
 \label{eq:canonical-expansion}
\end{align}
where
\begin{equation}\label{eq:kappa}
 \kappa=\frac{1-\eta}{\pi A^5}
 \left(\frac{\eta\vartheta'(\eta)}{\sigma}\right)^4>0.
\end{equation}
\end{theorem}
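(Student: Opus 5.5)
The holomorphy part of the statement is already settled by the compactness argument just before it. That argument produced $\epsilon_1$, $\psi$ and the single-valued holomorphic function $G(\vartheta(z))$ on $\Delta(\eta,\epsilon_1,\psi)$. Multiplying by the entire factor $1-z$ gives holomorphy of $S_3^{[4]}$. What remains is the local expansion \eqref{eq:canonical-expansion}.

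The plan is to substitute \eqref{eq:theta-local} into Theorem~\ref{thm:S-singular}. Put $w=w(z)=1-\vartheta(z)/\sigma$. On the local dented sector \eqref{eq:canonical-local-map}, $w$ is holomorphic in $t$ with $w=\Lambda t+O(t^2)$. We also showed there that $\Arg w$ differs from $\Arg t$ by less than $\psi-\phi$, so $w$ ranges in a fixed closed subsector of the $v$-sector of Theorem~\ref{thm:S-singular}. Shrinking $\psi$ slightly if needed keeps us strictly inside, so that the uniform version applies. Since $G=S-1-y$ differs from $S$ by a polynomial in $v$, we get
\[
 G(\vartheta(z))=\widehat Q_4(w)-\frac1{\pi A^5}w^4\Log w+O\bigl(w^5(1+|\Log w|)\bigr)
\]
for a polynomial $\widehat Q_4$ of degree at most four.

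We then re-expand each piece in $t$. First, $\widehat Q_4(w(z))$ is holomorphic in $t$ near $0$, so it equals its fourth Taylor polynomial in $t$ plus $O(t^5)$. Second, $w^4\Log w$ is handled by \eqref{eq:compose-log}. Here we write $\Log w=\Log t+\log\Lambda+\Log(1+O(t))$, with the branch consistent because $w/(\Lambda t)$ lies in a small disk about $1$. This gives $\Lambda^4t^4\Log t+c_\Lambda t^4+O(t^5(1+|\Log t|))$. Third, since $|w|\asymp|t|$ and $1+|\Log w|\asymp1+|\Log t|$, the remainder is $O(t^5(1+|\Log t|))$.

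Finally we multiply by $1-z=(1-\eta)+\eta t$. The factor $\eta t$ times the logarithmic term is $O(t^5|\Log t|)$, so it joins the remainder. Polynomial terms of degree at most four are collected into $\widetilde Q_4$. The coefficient of $t^4\Log t$ is therefore $-(1-\eta)\Lambda^4/(\pi A^5)$. With $\Lambda=\eta\vartheta'(\eta)/\sigma$ this is exactly $-\kappa$ as in \eqref{eq:kappa}. Positivity of $\kappa$ follows from $\eta<1$, $A>0$ and $\Lambda>0$ (Lemma~\ref{lem:eta-unique}).

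The only delicate step is the uniformity bookkeeping. Theorem~\ref{thm:S-singular} is uniform only in fixed smaller sectors, so we must check that the angular distortion $|\Arg w-\Arg t|<\psi-\phi$ keeps $w$ in such a closed subsector. If it does not, we first pass to an intermediate angle $\phi<\phi'<\psi$ and rerun the local-sector choice of $\tau_0$ with $\phi'$ in place of $\phi$. We must also check that the logarithm branches match, which holds because $w/t\to\Lambda>0$.
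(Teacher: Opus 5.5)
Your proposal is correct and follows the paper's own route: substitute $w=\Lambda t+O(t^2)$ into Theorem~\ref{thm:S-singular}, expand $w^4\Log w$ via \eqref{eq:compose-log}, absorb all analytic terms of degree at most four into $\widetilde Q_4$, and multiply by the nonvanishing factor $1-z=(1-\eta)+\eta t$; your sector and branch bookkeeping is more explicit than the paper's one-paragraph proof. One small slip: to push $w$ into a closed subsector of angle $\phi'>\phi$ you would need to \emph{enlarge} $\psi$ (or tighten the distortion bound), not shrink it, but your fallback of rerunning the choice of $\tau_0$ with an intermediate angle $\phi'$ already handles this correctly.
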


\begin{proof}
The preceding mapping argument proves $\Delta$-analyticity.  In
$(1-z)G(\vartheta(z))$, the prefactor $1-z=(1-\eta)+O(t)$ is analytic and
nonzero at $\eta$.  Subtracting $1+y$ from $S(y)$ does not alter its
nonanalytic term.  Thus the only order-four nonanalytic term is
\[
 -\frac{1-\eta}{\pi A^5}\Lambda^4t^4\Log t.
\]
All analytic terms through order four are absorbed into $\widetilde Q_4$; the
remainder follows from \eqref{eq:compose-log}.
\end{proof}

\begin{theorem}[Canonical coefficient asymptotics]\label{thm:canonical-coeff}
Let $S_3^{[4]}(n)=[z^n]S_3^{[4]}(z)$.  Then
\begin{equation}\label{eq:canonical-final}
 S_3^{[4]}(n)\sim C'n^{-5}\eta^{-n},
 \qquad
 C'=\frac{24(1-\eta)}{\pi A^5}
 \left(\frac{\eta\vartheta'(\eta)}{\sigma}\right)^4
 =7892.16205625817\ldots.
\end{equation}
The same directed interval evaluation used in \eqref{eq:eta-values} gives
\[
 7892.1620562581748200<C'<7892.1620562581748201,
\]
so the displayed decimal is certified rather than inferred from an
uncontrolled floating-point computation.
Equivalently,
\begin{equation}\label{eq:Cprime-alternative}
 C'=(1-\eta)\vartheta'(\eta)^4
 \frac{820125\pi^7}{256(256-81\pi)^5(165\pi-512)^3}\,4!\,\eta^4.
\end{equation}
\end{theorem}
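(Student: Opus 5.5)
The plan is to read off the coefficient asymptotics from the local expansion in Theorem~\ref{thm:canonical-expansion} via the Flajolet--Odlyzko transfer theorem \cite{FlajoletOdlyzko,FlajoletSedgewick}. That theorem already gives $\Delta$-analyticity of $S_3^{[4]}$ on the standard domain $\Delta(\eta,\epsilon_1,\psi)$, and Lemma~\ref{lem:eta-unique} shows that $\eta$ is the only dominant singularity. So all hypotheses of the transfer theorem at radius $\eta$ are satisfied. I then treat the three pieces of \eqref{eq:canonical-expansion} separately.

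\emph{Polynomial part.} The polynomial $\widetilde Q_4(1-z/\eta)$ contributes nothing to $[z^n]$ for $n\ge5$.

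\emph{Logarithmic part.} Rescale $z\mapsto z/\eta$ in the exact identity \eqref{eq:transfer-log-exact}. This gives, for $n\ge5$,
\[
 [z^n]\Bigl(1-\frac z\eta\Bigr)^4\Log\Bigl(1-\frac z\eta\Bigr)
 =-\frac{24\,\eta^{-n}}{n(n-1)(n-2)(n-3)(n-4)}
 \sim-24\,\eta^{-n}n^{-5}.
\]
Multiplying by $-\kappa$, this term contributes $24\kappa\,\eta^{-n}n^{-5}(1+O(1/n))$.

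\emph{Remainder.} The remainder $O\bigl(t^5(1+|\Log t|)\bigr)$ holds uniformly on the $\Delta$-domain (after shrinking to a slightly smaller sector if necessary). The big-$O$ transfer with a logarithmic factor then bounds its coefficients by $O(\eta^{-n}n^{-6}\log n)=o(\eta^{-n}n^{-5})$.

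Combining the three pieces gives $S_3^{[4]}(n)\sim 24\kappa\,n^{-5}\eta^{-n}$. Inserting \eqref{eq:kappa} yields the first formula for $C'$ in \eqref{eq:canonical-final}.

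For \eqref{eq:Cprime-alternative}, write $(\eta\vartheta'(\eta)/\sigma)^4=\eta^4\vartheta'(\eta)^4\sigma^{-4}$ and substitute the exact constants \eqref{eq:constants-exact}:
\[
 A^5=\frac{(1280-405\pi)^5}{(165\pi-512)^5},\qquad
 \sigma^4=\frac{256\,(165\pi-512)^8}{225^4\pi^8}.
\]
Then use $1280-405\pi=5(256-81\pi)$, and note that $225^4/5^5=820125$. The factor $1/(\pi A^5\sigma^4)$ collapses to
\[
 \frac{820125\,\pi^7}{256\,(256-81\pi)^5(165\pi-512)^3},
\]
which is exactly the stated expression with $4!=24$.

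The numerical enclosure of $C'$ comes from the same directed interval arithmetic as in Lemma~\ref{lem:eta-unique}. One evaluates the rational-in-$\pi$ expression with Machin bounds for $\pi$, using the certified enclosures of $\eta$ and $\vartheta'(\eta)$.

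The only delicate point is the transfer of the remainder. The error term carries a factor $|\Log t|$, so I will invoke the version of the transfer theorem for $O\bigl((1-z/\eta)^\alpha\log(1-z/\eta)\bigr)$ with $\alpha=5$. I will also check that the expansion in Theorem~\ref{thm:canonical-expansion} is uniform on a full (possibly smaller) $\Delta$-domain, not merely on sectors. This requires combining the local uniformity near $\eta$ with boundedness of $S_3^{[4]}$ on the compact remainder of the domain, where $|t|$ is bounded below so the bound holds trivially.
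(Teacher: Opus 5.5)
Your proposal is correct and follows essentially the paper's argument. Like the paper, you apply the transfer theorem to the expansion of Theorem~\ref{thm:canonical-expansion}, use the exact identity \eqref{eq:transfer-log-exact} to obtain the constant $24\kappa$, and substitute \eqref{eq:constants-exact} together with $1280-405\pi=5(256-81\pi)$ to reach \eqref{eq:Cprime-alternative}. Your separate treatment of the polynomial part, of the $O(\eta^{-n}n^{-6}\log n)$ remainder on a slightly smaller $\Delta$-domain, and of the factor $225^4/5^5=820125$ simply makes explicit what the paper's two-line proof leaves implicit.

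One minor caution concerns the numerical enclosure. The printed enclosures of $\eta$ and $\vartheta'(\eta)$, each of width $10^{-20}$, propagate to an interval for $C'$ several times wider than the stated width $10^{-16}$. Certifying that enclosure therefore requires carrying $\eta$ to a few more digits internally, rather than reusing the displayed bounds.
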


\begin{proof}
Apply \eqref{eq:transfer-log-exact} to
\eqref{eq:canonical-expansion}.  The coefficient of $-t^4\Log t$ is
$\kappa$, so
\[
 [z^n]S_3^{[4]}(z)\sim24\kappa\eta^{-n}n^{-5}.
\]
Substituting
\eqref{eq:constants-exact} and using
$1280-405\pi=5(256-81\pi)$ gives \eqref{eq:Cprime-alternative}.
\end{proof}

\section{Conclusion}

The proof has the noncircular chain
\[
 \begin{gathered}
 \text{Lagrange--Hankel asymptotics}
 \Longrightarrow S,S'\text{ regular on }|y|\le\sigma
 \Longrightarrow \Phi:\Omega\xrightarrow{\sim}\B_\sigma,\\
 \Phi:\Omega\xrightarrow{\sim}\B_\sigma
 \Longrightarrow \text{continuation through }|y|=\sigma,\ y\ne\sigma
 \Longrightarrow \text{sectorial Rouch\'e inversion},\\
 \text{sectorial inversion}
 \Longrightarrow S\in\mathcal O(\Delta(\sigma,\epsilon,\phi))
 \Longrightarrow \eqref{eq:S-singular-full}
 \Longrightarrow \eqref{eq:S-coeff-main},\\
 \eqref{eq:S-singular-full}+\eqref{eq:canonical-recall}
 \Longrightarrow S_3^{[4]}\in\mathcal O(\Delta(\eta,\epsilon_1,\psi))
 \Longrightarrow \eqref{eq:canonical-final}.
 \end{gathered}
\]
The exact leading constants are
\[
 \frac{24}{\pi A^5}=3.030955878514041\ldots,
 \qquad C'=7892.16205625817\ldots,
\]
with associated positive singularities
\[
 \sigma=0.0729243577550381\ldots,
 \qquad
 \eta=0.493407180576130875\ldots.
\]

\appendix

\section{From the hypergeometric coefficients to the Catalan determinant}
\label{app:catalan}

From
\[
 {}_2F_1(a,b;c;z)=\sum_{k\ge0}\frac{(a)_k(b)_k}{(c)_k k!}z^k,
\]
the coefficient of $z^n$ in \eqref{eq:F-def} is
\begin{equation}\label{eq:fn-hypergeom}
 f_n=\frac14\left[
 -16^{n+2}\frac{(-1/2)_{n+2}^2}{((n+2)!)^2}
 +2\cdot16^{n+1}\frac{(-1/2)_{n+1}(3/2)_{n+1}}
 {(3)_{n+1}(n+1)!}
 \right].
\end{equation}
Using
\[
 \left(-\frac12\right)_k=-\frac{(2k-2)!}{2^{2k-1}(k-1)!},
 \qquad
 \left(\frac32\right)_k=\frac{(2k+1)!}{4^k k!},
 \qquad (3)_k=\frac{(k+2)!}{2},
\]
one obtains
\begin{equation}\label{eq:fn-factorial}
 f_n=\frac{12(2n)!(2n+1)!}{(n!)^2(n+2)!(n+3)!}.
\end{equation}
On the other hand,
\[
 \frac{C_{n+1}}{C_n}=\frac{2(2n+1)}{n+2},
\]
and hence
\begin{align*}
 C_nC_{n+2}-C_{n+1}^2
 &=C_nC_{n+1}\left(\frac{2(2n+3)}{n+3}
                    -\frac{2(2n+1)}{n+2}\right)\\
 &=\frac{6C_nC_{n+1}}{(n+2)(n+3)}
 =\frac{12(2n)!(2n+1)!}{(n!)^2(n+2)!(n+3)!}.
\end{align*}
This proves \eqref{eq:catalan-det}.

\section{Convergent hypergeometric expansions at the finite endpoint and the
logarithmic coefficient}
\label{app:logcoeff}

This appendix derives the two convergent power series underlying
\eqref{eq:H1-local}--\eqref{eq:H2-local} and then performs the cancellation
that produces the coefficient $B=-1/\pi$ in \eqref{eq:F-endpoint}.
Throughout,
\[
 |u|<1,\qquad |\Arg u|<\pi,
\]
and $\Log u$ denotes the corresponding principal branch.

\subsection{The complete integer-difference connection formula}

Let
\[
 c=a+b+\ell,\qquad \ell\in\N_0.
\]
DLMF \cite[\S15.8(ii), Eq.~(15.8.10)]{DLMF} gives the connection formula for
Olver's regularized hypergeometric function.  Multiplying that formula by
$\Gamma(c)$ and putting $z=1-u$ gives the following exact expansion for the
ordinary Gauss function:
\begin{align}
 {}_2F_1(a,b;c;1-u)
 ={}&
 \frac{\Gamma(c)}{\Gamma(a+\ell)\Gamma(b+\ell)}
 \sum_{k=0}^{\ell-1}
 \frac{(a)_k(b)_k(\ell-k-1)!}{k!}(-u)^k
 \notag\\
 &-\frac{\Gamma(c)(-u)^\ell}{\Gamma(a)\Gamma(b)}
 \sum_{k=0}^{\infty}
 \alpha_{\ell,k}(a,b)u^k
 \left(\Log u+\beta_{\ell,k}(a,b)\right),
 \label{eq:endpoint-connection}
\end{align}
where the first sum is empty when $\ell=0$, and
\begin{align}
 \alpha_{\ell,k}(a,b)
 &:=
 \frac{(a+\ell)_k(b+\ell)_k}{k!(k+\ell)!},
 \label{eq:endpoint-alpha}\\
 \beta_{\ell,k}(a,b)
 &:=
 -\psi(k+1)-\psi(k+\ell+1)
 +\psi(a+k+\ell)+\psi(b+k+\ell).
 \label{eq:endpoint-beta}
\end{align}
Equation \eqref{eq:endpoint-connection} is a convergent connection formula,
not merely a formal asymptotic expansion.

To display its two analytic parts, define
\begin{align}
 \mathcal L_{a,b,\ell}(u)
 &:=
 -\frac{\Gamma(c)(-u)^\ell}{\Gamma(a)\Gamma(b)}
 \sum_{k=0}^{\infty}\alpha_{\ell,k}(a,b)u^k,
 \label{eq:endpoint-log-series}\\
 \mathcal A_{a,b,\ell}(u)
 &:=
 \frac{\Gamma(c)}{\Gamma(a+\ell)\Gamma(b+\ell)}
 \sum_{k=0}^{\ell-1}
 \frac{(a)_k(b)_k(\ell-k-1)!}{k!}(-u)^k
 \notag\\
 &\quad
 -\frac{\Gamma(c)(-u)^\ell}{\Gamma(a)\Gamma(b)}
 \sum_{k=0}^{\infty}
 \alpha_{\ell,k}(a,b)\beta_{\ell,k}(a,b)u^k.
 \label{eq:endpoint-regular-series}
\end{align}
Then \eqref{eq:endpoint-connection} becomes
\begin{equation}
 {}_2F_1(a,b;c;1-u)
 =\mathcal A_{a,b,\ell}(u)
 {}+\mathcal L_{a,b,\ell}(u)\Log u.
 \label{eq:endpoint-two-series}
\end{equation}
Thus the coefficient of $\Log u$ and the nonlogarithmic part are represented
by two separate power series.

\subsection{Direct convergence verification}

For the parameter triples used below, the gamma-ratio estimate gives
\begin{align}
 \alpha_{\ell,k}(a,b)
 &=
 \frac{\Gamma(k+a+\ell)\Gamma(k+b+\ell)}
 {\Gamma(a+\ell)\Gamma(b+\ell)
  \Gamma(k+1)\Gamma(k+\ell+1)}
 \notag\\
 &=
 \frac{k^{a+b+\ell-2}}
 {\Gamma(a+\ell)\Gamma(b+\ell)}
 \left(1+O(k^{-1})\right)
 =
 \frac{k^{c-2}}
 {\Gamma(a+\ell)\Gamma(b+\ell)}
 \left(1+O(k^{-1})\right).
 \label{eq:endpoint-alpha-asymptotic}
\end{align}
The standard fixed-shift expansion
\[
 \psi(k+s)=\log k+\frac{s-\frac12}{k}+O(k^{-2})
 \qquad(k\to\infty)
\]
gives, after cancellation of the four $\log k$ terms,
\begin{align}
 \beta_{\ell,k}(a,b)
 &=\frac{
 -\frac12-(\ell+\frac12)
 +(a+\ell-\frac12)+(b+\ell-\frac12)}
 {k}
 +O(k^{-2})
 \notag\\
 &=\frac{a+b+\ell-2}{k}+O(k^{-2})
 =\frac{c-2}{k}+O(k^{-2}).
 \label{eq:endpoint-beta-asymptotic}
\end{align}

For the first hypergeometric function below, $c=1$, and hence
\[
 \alpha_{\ell,k}=O(k^{-1}),
 \qquad
 \alpha_{\ell,k}\beta_{\ell,k}=O(k^{-2}).
\]
For the second, $c=3$, and hence
\[
 \alpha_{\ell,k}=O(k),
 \qquad
 \alpha_{\ell,k}\beta_{\ell,k}=O(1).
\]
In the present two cases the leading constants in
\eqref{eq:endpoint-alpha-asymptotic}--\eqref{eq:endpoint-beta-asymptotic}
are nonzero.  Hence the $k$th roots of the absolute values of the coefficients
in all four series tend to one, and the Cauchy--Hadamard formula gives radius
of convergence one.  Therefore both
\eqref{eq:endpoint-log-series} and
\eqref{eq:endpoint-regular-series} converge absolutely for $|u|<1$ and
locally uniformly there.  In particular,
\[
 \mathcal A_{a,b,\ell},\ \mathcal L_{a,b,\ell}
 \in\mathcal O(\{u:|u|<1\}).
\]

\subsection{The two hypergeometric functions}

To keep the notation consistent with
\eqref{eq:H1-local}--\eqref{eq:H2-local}, write
\[
 H_j(1-u)=P_j(u)+L_j(u)\Log u,\qquad j=1,2,
\]
where $P_j$ is the regular part and $L_j$ is the logarithmic coefficient.
Thus $P_j$ and $L_j$ are not two different hypergeometric functions: they
are the two convergent analytic factors of $H_j(1-u)$.

\paragraph{The first function.}
For
\[
 H_1(z)={}_2F_1\!\left(-\frac12,-\frac12;1;z\right),
\]
we have $a=b=-1/2$, $c=1$, and $\ell=2$.  Equations
\eqref{eq:endpoint-log-series}--\eqref{eq:endpoint-regular-series} give
\begin{align}
 L_1(u)
 &=-\frac{u^2}{4\pi}
 \sum_{k=0}^{\infty}
 \frac{(3/2)_k^2}{k!(k+2)!}u^k,
 \label{eq:L1-complete}\\
 P_1(u)
 &=\frac4\pi\left(1-\frac u4\right)
 -\frac{u^2}{4\pi}
 \sum_{k=0}^{\infty}
 \frac{(3/2)_k^2}{k!(k+2)!}
 \beta_{2,k}\!\left(-\frac12,-\frac12\right)u^k.
 \label{eq:P1-complete}
\end{align}
Both series converge for $|u|<1$.  The first three coefficients in
\eqref{eq:L1-complete} are
\[
 \frac{(3/2)_0^2}{0!\,2!}=\frac12,\qquad
 \frac{(3/2)_1^2}{1!\,3!}=\frac38,\qquad
 \frac{(3/2)_2^2}{2!\,4!}=\frac{75}{256}.
\]
Consequently,
\begin{equation}
 L_1(u)
 =-\frac{u^2}{8\pi}-\frac{3u^3}{32\pi}
 -\frac{75u^4}{1024\pi}+O(u^5).
 \label{eq:L1-first-terms}
\end{equation}

\paragraph{The second function.}
For
\[
 H_2(z)={}_2F_1\!\left(-\frac12,\frac32;3;z\right),
\]
we have $a=-1/2$, $b=3/2$, $c=3$, and $\ell=2$.  Hence
\begin{align}
 L_2(u)
 &=\frac{2u^2}{\pi}
 \sum_{k=0}^{\infty}
 \frac{(3/2)_k(7/2)_k}{k!(k+2)!}u^k,
 \label{eq:L2-complete}\\
 P_2(u)
 &=\frac{32}{15\pi}\left(1+\frac{3u}{4}\right)
 {}+\frac{2u^2}{\pi}
 \sum_{k=0}^{\infty}
 \frac{(3/2)_k(7/2)_k}{k!(k+2)!}
 \beta_{2,k}\!\left(-\frac12,\frac32\right)u^k.
 \label{eq:P2-complete}
\end{align}
Again, both series converge for $|u|<1$.  The first three coefficients in
\eqref{eq:L2-complete} are
\[
 \frac{(3/2)_0(7/2)_0}{0!\,2!}=\frac12,\qquad
 \frac{(3/2)_1(7/2)_1}{1!\,3!}=\frac78,\qquad
 \frac{(3/2)_2(7/2)_2}{2!\,4!}=\frac{315}{256}.
\]
It follows that
\begin{equation}
 L_2(u)
 =\frac{u^2}{\pi}+\frac{7u^3}{4\pi}
 +\frac{315u^4}{128\pi}+O(u^5).
 \label{eq:L2-first-terms}
\end{equation}
Equations \eqref{eq:L1-first-terms}--\eqref{eq:L2-first-terms} reproduce the
logarithmic parts in \eqref{eq:H1-local}--\eqref{eq:H2-local}, while
\eqref{eq:P1-complete}--\eqref{eq:P2-complete} justify the statement there
that $P_1$ and $P_2$ are holomorphic at zero.

\subsection{Cancellation in the combination defining $F$}

In \eqref{eq:F-def}, with $z=\rho(1-u)$ and $\rho=1/16$, the part involving
the two hypergeometric functions is
\[
 \frac{64}{(1-u)^2}
 \left[-H_1(1-u)-\frac{1-u}{8}H_2(1-u)\right].
\]
Its coefficient of $\Log u$ is therefore
\begin{equation}
 \frac{64}{(1-u)^2}
 \left[-L_1(u)-\frac{1-u}{8}L_2(u)\right].
 \label{eq:F-log-coefficient-combination}
\end{equation}
From \eqref{eq:L2-first-terms},
\begin{align*}
 \frac{1-u}{8}L_2(u)
 &=
 \frac18\left[
 \frac{u^2}{\pi}
 {}+\left(\frac74-1\right)\frac{u^3}{\pi}
 {}+\left(\frac{315}{128}-\frac74\right)\frac{u^4}{\pi}
 {}+O(u^5)\right]\\
 &=
 \frac{u^2}{8\pi}
 {}+\frac{3u^3}{32\pi}
 {}+\frac{91u^4}{1024\pi}
 {}+O(u^5).
\end{align*}
Combining this with \eqref{eq:L1-first-terms} gives the exact cancellations
at orders two and three:
\begin{align*}
 -L_1(u)-\frac{1-u}{8}L_2(u)
 &=
 \left(\frac{u^2}{8\pi}+\frac{3u^3}{32\pi}
 +\frac{75u^4}{1024\pi}\right)\\
 &\quad-
 \left(\frac{u^2}{8\pi}+\frac{3u^3}{32\pi}
 +\frac{91u^4}{1024\pi}\right)
 +O(u^5)\\
 &=-\frac{u^4}{64\pi}+O(u^5).
\end{align*}
Finally,
\[
 \frac{64}{(1-u)^2}
 =64\left(1+2u+O(u^2)\right),
\]
so \eqref{eq:F-log-coefficient-combination} equals
\[
 -\frac{u^4}{\pi}+O(u^5).
\]
Therefore the logarithmic coefficient in \eqref{eq:F-endpoint} is
\[
 \boxed{B=-\frac1\pi}.
\]

\section{Hypergeometric and elliptic-integral calculations}
\label{app:elliptic-algebra}

This appendix supplies the coefficient comparisons, integral reductions, and
algebraic simplifications used in Section~\ref{sec:independent-coeff}.  The
source formulae are cited at the points where they enter.  Recall that the
present paper uses the parameter, whereas DLMF uses the modulus:
\[
 K(q)=\mathsf K(\sqrt q),\qquad E(q)=\mathsf E(\sqrt q).
\]

\subsection{Coefficient proof of \eqref{eq:ell-H1}}

The hypergeometric representations of the complete elliptic integrals are
DLMF \cite[\S19.5, Eqs.~(19.5.1)--(19.5.2)]{DLMF}.  In the present parameter
notation, put
\[
 a_n:=\frac{(\frac12)_n^2}{(n!)^2}
     =\frac{\binom{2n}{n}^2}{16^n},
 \qquad
 \widehat K(z):=\frac2\pi K(z),\qquad
 \widehat E(z):=\frac2\pi E(z).
\]
Since
\[
 \frac{(-\frac12)_n}{(\frac12)_n}=-\frac1{2n-1}\qquad(n\ge1),
\]
the two Maclaurin series are
\[
 \widehat K(z)=\sum_{n\ge0}a_nz^n,
 \qquad
 \widehat E(z)=1-\sum_{n\ge1}\frac{a_n}{2n-1}z^n.
\]
The constant coefficient of $2\widehat E-(1-z)\widehat K$ is one.  For
$n\ge1$, its coefficient of $z^n$ is
\begin{align*}
 -\frac{2a_n}{2n-1}-a_n+a_{n-1}
 &=a_{n-1}-\frac{2n+1}{2n-1}a_n\\
 &=a_{n-1}\left(1-
   \frac{(2n+1)(2n-1)}{4n^2}\right)
 =\frac{a_{n-1}}{4n^2},
\end{align*}
where $a_n/a_{n-1}=((2n-1)/(2n))^2$ was used.  On the other hand,
\[
 \frac{(-\frac12)_n^2}{(n!)^2}
 =\frac1{4n^2}\frac{(\frac12)_{n-1}^2}{((n-1)!)^2}
 =\frac{a_{n-1}}{4n^2}.
\]
Thus the coefficients agree with those of
${}_2F_1(-1/2,-1/2;1;z)$, proving \eqref{eq:ell-H1} first for $|z|<1$.
Both sides have the same principal continuation to $\C\setminus[1,\infty)$,
so the identity theorem gives the stated continuation.

\subsection{Euler-integral proof of \eqref{eq:ell-H2}}

DLMF \cite[\S15.6, Eq.~(15.6.1)]{DLMF} writes Euler's integral for Olver's
regularized hypergeometric function.  Multiplication by $\Gamma(c)$ gives the
ordinary Gauss function in the form
\[
 {}_2F_1(a,b;c;z)
 =\frac{\Gamma(c)}{\Gamma(b)\Gamma(c-b)}
  \int_0^1t^{b-1}(1-t)^{c-b-1}(1-zt)^{-a}\,\dd t.
\]
For $a=-1/2$, $b=3/2$, and $c=3$, the prefactor is
\[
 \frac{\Gamma(3)}{\Gamma(3/2)^2}=\frac8\pi.
\]
The substitution $t=\sin^2\theta$ therefore gives
\begin{equation}\label{eq:app-H2-integral}
 {}_2F_1\!\left(-\frac12,\frac32;3;z\right)
 =\frac{16}{\pi}I(z),
 \quad
 I(z):=\int_0^{\pi/2}\sin^2\theta\cos^2\theta
        \sqrt{1-z\sin^2\theta}\,\dd\theta.
\end{equation}
Initially take $|z|<1$.  Define
\[
 J_p(z):=\int_0^{\pi/2}
 \frac{\sin^{2p}\theta}{\sqrt{1-z\sin^2\theta}}\,\dd\theta,
 \qquad p\ge0.
\]
Then $J_0=K$ and
\begin{equation}\label{eq:app-J1}
 E(z)=K(z)-zJ_1(z),\qquad J_1(z)=\frac{K(z)-E(z)}z.
\end{equation}
For $p\ge1$, the function
\[
 \sin^{2p-1}\theta\cos\theta\sqrt{1-z\sin^2\theta}
\]
vanishes at both endpoints.  Integrating its derivative gives
\begin{equation}\label{eq:app-J-recurrence}
 (2p-1)J_{p-1}-2p(1+z)J_p+(2p+1)zJ_{p+1}=0.
\end{equation}
The cases $p=1,2$ yield
\[
 J_2=\frac{2(1+z)J_1-K}{3z},
 \qquad
 J_3=\frac{4(1+z)J_2-3J_1}{5z}.
\]
Expanding the numerator in the integral defining $I$ gives
\[
 I=J_1-(1+z)J_2+zJ_3
   =\frac15\bigl(2J_1-(1+z)J_2\bigr).
\]
Substitution of $J_2$ and then \eqref{eq:app-J1} gives
\begin{align*}
 I(z)
 &=\frac1{15z}\left[-2(z^2-z+1)J_1+(1+z)K\right]\\
 &=\frac1{15z^2}\left[
 2(z^2-z+1)E(z)-(z^2-3z+2)K(z)\right].
\end{align*}
Equation \eqref{eq:app-H2-integral} is now exactly \eqref{eq:ell-H2}.  In
particular, its square bracket is
\[
 \frac{15\pi}{16}z^2+O(z^3),
\]
so the displayed $z^{-2}$ in \eqref{eq:ell-H2} has a removable singularity at
zero.  The identity theorem again extends the equality from $|z|<1$ to the
common principal cut plane.

\subsection{Reduction of $F$ to $K$ and $E$}

Put
\[
 R_2(z):=z^2-z+1,\qquad S_2(z):=z^2-3z+2.
\]
Replacing $z$ by $z/16$ in \eqref{eq:F-def} and then using
\eqref{eq:ell-H1}--\eqref{eq:ell-H2} gives
\begin{align*}
 F\!\left(\frac z{16}\right)
 =\frac{64}{z^2}\biggl[&1+\frac{3z}{8}-\frac4\pi E(z)
 +\frac{2(1-z)}\pi K(z)\\
 &-\frac{2}{15\pi z}\bigl(2R_2(z)E(z)-S_2(z)K(z)\bigr)\biggr].
\end{align*}
The $E$-coefficient inside the square brackets is
\[
 -\frac4\pi-\frac{4R_2(z)}{15\pi z}
 =-\frac{4(z^2+14z+1)}{15\pi z},
\]
while the $K$-coefficient is
\[
 \frac{2(1-z)}\pi+\frac{2S_2(z)}{15\pi z}
 =-\frac{4(z-1)(7z+1)}{15\pi z}.
\]
Finally,
\[
 \frac{64}{z^2}\left(1+\frac{3z}{8}\right)
 =\frac8{15\pi z^3}\,\pi(45z^2+120z).
\]
These three identities give \eqref{eq:F-elliptic}.

\subsection{Reciprocal parameters, boundary signs, and the functions $U,V$}

The source of \eqref{eq:K-boundary}--\eqref{eq:E-boundary} is DLMF
\cite[\S19.7(i), Eq.~(19.7.3)]{DLMF}, which is written in the modulus.  In
that notation, for $\re k>0$,
\begin{align*}
 \mathsf K(1/k)
 &=k\bigl(\mathsf K(k)\mp i\mathsf K(k')\bigr),\\
 \mathsf E(1/k)
 &=k^{-1}\bigl(\mathsf E(k)\pm i\mathsf E(k')
 -(k')^2\mathsf K(k)\mp ik^2\mathsf K(k')\bigr),
\end{align*}
where the upper DLMF signs apply for $\im k^2>0$ and the lower signs for
$\im k^2<0$.

For a boundary point $x_\pm=1/q\pm i0$, set
\[
 w_\pm:=\sqrt{x_\pm},\qquad k_\pm:=w_\pm^{-1}.
\]
Then $k_\pm^2=x_\pm^{-1}=q\mp i0$: taking a reciprocal reverses the side of
the real axis.  Hence the upper boundary value $x_+$ requires the lower DLMF
signs, and the lower boundary value $x_-$ requires the upper DLMF signs.  With
$k'=\sqrt{1-q}$ and with
$K(q)=\mathsf K(\sqrt q)$, $E(q)=\mathsf E(\sqrt q)$, this gives precisely
\eqref{eq:K-boundary}--\eqref{eq:E-boundary}.

Since $16r=1/q$, substitution of $z=1/q$ into \eqref{eq:F-elliptic} gives
\begin{align*}
 F_\pm(r)=\frac{8q^3}{15\pi}\biggl[&
 \pi\left(\frac{45}{q^2}+\frac{120}{q}\right)
 -\frac{32(1+14q+q^2)}{q^2}E_\pm(1/q)\\
 &-\frac{32(1-q)(q+7)}{q^2}K_\pm(1/q)\biggr].
\end{align*}
For the real $K(q)$-coefficient one uses
\[
 (1+14q+q^2)(1-q)-q(1-q)(q+7)
 =(1-q)(1+7q),
\]
whereas the imaginary $K(1-q)$-coefficient uses
\[
 q\bigl((1+14q+q^2)+(1-q)(q+7)\bigr)=8q(1+q).
\]
After the common factor $\sqrt q$ is extracted, the real and imaginary parts
are exactly $U(q)$ and $\pm32V(q)$ in
\eqref{eq:U-def}--\eqref{eq:V-def}.  This proves
\eqref{eq:F-boundary-UV}.  Since $r=1/(16q)$,
\[
 r\left|\frac{8\sqrt q}{15\pi}\right|^2
 =\frac4{225\pi^2},
\]
which proves \eqref{eq:modulus-UV}.

Finally, as $q\uparrow1$,
\[
 E(q)\to1,\quad (1-q)K(q)\to0,\quad
 K(1-q)\to\frac\pi2,\quad E(1-q)\to\frac\pi2.
\]
The first and last two limits follow directly from the defining integrals;
the remaining one also follows from the logarithmic expansion in
DLMF \cite[\S19.12, Eq.~(19.12.1)]{DLMF}.  Substitution gives
$U(q)\to165\pi-512$ and $V(q)\to0$, as asserted in the text.

\subsection{Differentiation of $U$ and $V$}

DLMF \cite[\S19.4(i), Eqs.~(19.4.1)--(19.4.2)]{DLMF} differentiates with
respect to the modulus $k$.  Applying the chain rule to $q=k^2$ gives
\[
 K'(q)=\frac{E(q)}{2q(1-q)}-\frac{K(q)}{2q},
 \qquad
 E'(q)=\frac{E(q)-K(q)}{2q}.
\]
For
\[
 R_U(q):=(1-q)(1+7q),\qquad R_E(q):=1+14q+q^2,
\]
we have $R_U'(q)=6-14q$ and $R_E'(q)=14+2q$.  Differentiating
\eqref{eq:U-def} and inserting the preceding derivative formulae gives
\begin{align*}
 U'(q)
 ={}&\frac{45\pi(1+8q)}{2\sqrt q}
 +\left(32R_U'-\frac{16R_U}{q}+\frac{16R_E}{q}\right)K(q)\\
 &+\left(\frac{16R_U}{q(1-q)}-32R_E'-\frac{16R_E}{q}\right)E(q)\\
 ={}&\frac{45\pi(1+8q)}{2\sqrt q}
 +320(1-q)K(q)-80(q+7)E(q)\\
 ={}&-\frac5{2\sqrt q}\left[
 32\sqrt q\bigl((q+7)E(q)-4(1-q)K(q)\bigr)
 -9\pi(8q+1)\right].
\end{align*}
This proves \eqref{eq:Uprime} and \eqref{eq:H-def}.

For the derivative of $V$, abbreviate
$K_c=K(1-q)$ and $E_c=E(1-q)$.  The chain rule gives
\[
 \frac{\dd E_c}{\dd q}=\frac{K_c-E_c}{2(1-q)},
 \qquad
 \frac{\dd K_c}{\dd q}
 =-\frac{E_c}{2q(1-q)}+\frac{K_c}{2(1-q)}.
\]
Consequently
\begin{align*}
 V'(q)={}&(14+2q)E_c
 +R_E(q)\frac{K_c-E_c}{2(1-q)}
 -8(1+2q)K_c\\
 &-8q(1+q)\left(
 -\frac{E_c}{2q(1-q)}+\frac{K_c}{2(1-q)}\right)\\
 ={}&\frac52(q+7)E_c-\frac52(5q+3)K_c,
\end{align*}
which is \eqref{eq:Vprime}.

\subsection{The identity for $H(t^2)$}

The Maclaurin series are DLMF \cite[\S19.5]{DLMF},
Eqs.~(19.5.1)--(19.5.2), written in parameter notation.  Dividing them by
$\pi$ gives
\[
 \frac{K(t^2)}\pi=\frac12\sum_{j\ge0}a_jt^{2j},
 \qquad
 \frac{E(t^2)}\pi=\frac12\left(1-\sum_{j\ge1}
 \frac{a_j}{2j-1}t^{2j}\right).
\]
Substitution into \eqref{eq:H-def} yields
\begin{align*}
 \frac{H(t^2)}\pi={}&16t(t^2+7)\left(1-\sum_{j\ge1}
 \frac{a_j}{2j-1}t^{2j}\right)
 -64t(1-t^2)\sum_{j\ge0}a_jt^{2j}\\
 &-9(8t^2+1).
\end{align*}
The coefficients of $1,t,t^2,t^3$ form
$P_H(t)=-9+48t-72t^2+36t^3$.  For $n\ge2$, the coefficient of
$t^{2n+1}$ is
\[
 -\frac{112a_n}{2n-1}-\frac{16a_{n-1}}{2n-3}
 -64a_n+64a_{n-1}.
\]
Since $a_n=a_{n-1}((2n-1)/(2n))^2$, this reduces to
\[
 -\frac{36a_{n-1}}{n^2(2n-3)},
\]
which proves \eqref{eq:H-PT}.

\subsection{Positive coefficients of $D_0$}

Substitute the same series into
$D_0(s)=(8-5s)K(s)-(8-s)E(s)$.  The constant, linear, and quadratic
coefficients vanish.  For $n\ge3$, the coefficient of $s^n$, divided by
$\pi/2$, is
\[
 8a_n-5a_{n-1}+\frac{8a_n}{2n-1}-\frac{a_{n-1}}{2n-3}.
\]
Using the ratio $a_n/a_{n-1}$ simplifies it to
\[
 \frac{6(n-1)(n-2)}{n(2n-3)}a_{n-1}>0,
\]
which is \eqref{eq:D0-positive}.

\subsection{Conversion of the complementary-modulus expansions}

DLMF \cite[\S19.12, Eqs.~(19.12.1)--(19.12.3)]{DLMF} gives the convergent
series, in modulus notation,
\begin{align*}
 \mathsf K(k)
 &=\sum_{m\ge0}\frac{(\frac12)_m^2}{(m!)^2}(k')^{2m}
   \left(\log\frac1{k'}+\delta_m\right),\\
 \mathsf E(k)
 &=1+\frac12\sum_{m\ge0}
 \frac{(\frac12)_m(\frac32)_m}{(2)_m m!}(k')^{2m+2}
 \left(\log\frac1{k'}+\delta_m
 -\frac1{(2m+1)(2m+2)}\right),
\end{align*}
where
\[
 \delta_m:=\psi(m+1)-\psi\!\left(m+\frac12\right).
\]
The digamma special values in DLMF
\cite[\S5.4(ii), Eqs.~(5.4.14)--(5.4.15)]{DLMF} give
\[
 \delta_m=2\log2-2(H_{2m}-H_m)=2\log2-d_m.
\]
To obtain $K(1-q)$ and $E(1-q)$ in the paper's parameter notation, set
$k=\sqrt{1-q}$, so $k'=\sqrt q$.  Since
\[
 \frac{(\frac12)_n^2}{(n!)^2}=a_n,
 \qquad
 \log\frac1{\sqrt q}+\delta_n
 =\log\frac4{\sqrt q}-d_n=L_q-d_n,
\]
the first DLMF series becomes \eqref{eq:K-complementary}.

For the second series put $n=m+1$.  Its coefficient becomes
\begin{align*}
 \frac12\frac{(\frac12)_{n-1}(\frac32)_{n-1}}
 {(2)_{n-1}(n-1)!}
 &=\frac{(\frac12)_{n-1}(\frac12)_n}{n!(n-1)!}\\
 &=\frac{2n}{2n-1}a_n.
\end{align*}
The expression in its parentheses is initially
\[
 L_q-d_{n-1}-\frac1{2n(2n-1)}.
\]
But
\begin{align*}
 d_n-d_{n-1}
 &=2\left(\frac1{2n-1}+\frac1{2n}-\frac1n\right)
 =\frac1{n(2n-1)}
 =\frac2{2n(2n-1)}.
\end{align*}
Therefore
\[
 L_q-d_{n-1}-\frac1{2n(2n-1)}
 =L_q-d_n+\frac1{2n(2n-1)},
\]
which proves \eqref{eq:E-complementary}, including the apparently changed
sign of the last term.

\subsection{The explicit bounds at $q=1/8$}

For $n\ge1$,
\[
 \frac{d_n}{2}=\sum_{j=n+1}^{2n}\frac1j
 <\int_n^{2n}\frac{\dd x}{x}=\log2.
\]
Thus $d_n<2\log2<L=(7/2)\log2$, and every summand in
\eqref{eq:E-complementary} at $q=1/8$ is positive.  The $n=1$ summand is
\[
 \frac{2a_1}{8}\left(L-d_1+\frac12\right)
 =\frac1{16}\left(L-\frac12\right),
\]
because $a_1=1/4$ and $d_1=1$.  Keeping this term and using
$L>L_-$ proves \eqref{eq:E78-lower}.

For the upper bound, \eqref{eq:K-complementary} gives
\[
 K(7/8)=L+\frac{L-1}{32}
 +\sum_{n\ge2}a_n8^{-n}(L-d_n).
\]
Here $0<a_n\le1$ and $0<L-d_n<L$, hence
\[
 0<\sum_{n\ge2}a_n8^{-n}(L-d_n)
 <L\sum_{n\ge2}8^{-n}=\frac{L}{56}.
\]
Replacing $L$ by its upper rational bound $L_+$ in the three increasing
expressions gives \eqref{eq:K78-upper}.

\section{Convergent hypergeometric expansions at infinity and the leading
constant}
\label{app:infinity}

This appendix derives the expansion used in
\eqref{eq:F-infinity}--\eqref{eq:Phi-infinity} directly from a convergent
series in $1/w$.  Throughout, $|w|>1$ and
$|\Arg(-w)|<\pi$, and
\[
 (-w)^\alpha:=\exp\{\alpha\Log(-w)\}.
\]
The same convention is used after $w=16z$.  Thus the branches below are the
ones induced by the principal continuation on the slit plane.

\subsection{The integer-difference connection formula}

DLMF \cite[\S15.8(ii), Eq.~(15.8.8)]{DLMF} states the formula for Olver's
regularized function
$\mathbf F(a,b;c;w)={}_2F_1(a,b;c;w)/\Gamma(c)$.  After multiplication by
$\Gamma(c)$, its ordinary Gauss-function form is, for
$\ell\in\N_0$,
\begin{align}
 {}_2F_1(a,a+\ell;c;w)
 ={}&\frac{\Gamma(c)(-w)^{-a}}{\Gamma(a+\ell)}
 \sum_{k=0}^{\ell-1}
 \frac{(a)_k(\ell-k-1)!}{k!\,\Gamma(c-a-k)}w^{-k}
 \notag\\
 &+\frac{\Gamma(c)(-w)^{-a}}{\Gamma(a)}
 \sum_{k=0}^{\infty}
 \frac{(a+\ell)_k(-1)^k w^{-k-\ell}}
 {k!(k+\ell)!\,\Gamma(c-a-k-\ell)}
 \Xi_{\ell,k}(a,c;w),
 \label{eq:infinity-connection}
\end{align}
where the first sum is empty when $\ell=0$, and
\begin{align}
 \Xi_{\ell,k}(a,c;w):={}&\Log(-w)+\psi(k+1)+\psi(k+\ell+1)
 \notag\\
 &-\psi(a+k+\ell)-\psi(c-a-k-\ell).
 \label{eq:Xi-infinity}
\end{align}
Here $\psi=\Gamma'/\Gamma$.  Equations
\eqref{eq:infinity-connection}--\eqref{eq:Xi-infinity} are exact convergent
series, not merely formal asymptotic expansions.  Their convergence is local
uniform on $|w|>1$, $|\Arg(-w)|<\pi$.

Put
\[
 H_1(w):={}_2F_1\!\left(-\frac12,-\frac12;1;w\right),
 \qquad
 H_2(w):={}_2F_1\!\left(-\frac12,\frac32;3;w\right).
\]
For $H_1$, take $a=-1/2$, $c=1$, and $\ell=0$ in
\eqref{eq:infinity-connection}.  This gives the full convergent expansion
\begin{align}
 H_1(w)={}&\frac{(-w)^{1/2}}{\Gamma(-1/2)}
 \sum_{k=0}^{\infty}
 \frac{(-1/2)_k(-1)^k w^{-k}}
 {(k!)^2\Gamma(3/2-k)}
 \notag\\[-2mm]
 &\qquad\times\left[
 \Log(-w)+2\psi(k+1)-\psi\!\left(k-\frac12\right)
 -\psi\!\left(\frac32-k\right)
 \right].
 \label{eq:H1-infinity-series}
\end{align}
For $H_2$, take $a=-1/2$, $c=3$, and $\ell=2$.  The two terms in the
finite sum and the infinite logarithmic series give
\begin{align}
 H_2(w)={}&(-w)^{1/2}\left(\frac{32}{15\pi}
 -\frac{8}{3\pi}w^{-1}\right)
 \notag\\
 &-\frac{(-w)^{1/2}}{\sqrt\pi}
 \sum_{k=0}^{\infty}
 \frac{(3/2)_k(-1)^k w^{-k-2}}
 {k!(k+2)!\Gamma(3/2-k)}
 \notag\\[-2mm]
 &\qquad\times\left[
 \Log(-w)+\psi(k+1)+\psi(k+3)
 -\psi\!\left(k+\frac32\right)
 -\psi\!\left(\frac32-k\right)
 \right].
 \label{eq:H2-infinity-series}
\end{align}
For clarity, the coefficients of the finite sum in
\eqref{eq:H2-infinity-series} are
\begin{align*}
 \frac{\Gamma(3)}{\Gamma(3/2)\Gamma(7/2)}
 &=\frac{32}{15\pi},\\
 \frac{\Gamma(3)}{\Gamma(3/2)}
 \frac{-1/2}{\Gamma(5/2)}
 &=-\frac{8}{3\pi}.
\end{align*}
This displays explicitly why the dominant coefficient is
$32/(15\pi)$ and also records the next algebraic term.

\subsection{Direct convergence verification}

For completeness, we now verify the convergence of the two infinite
series in \eqref{eq:H1-infinity-series} and
\eqref{eq:H2-infinity-series} directly from their coefficients.  This
also supplies the uniform tail estimates used below.  Put
\begin{align}
 \mathfrak a_{1,k}
 &:=\frac{(-\tfrac12)_k(-1)^k}
          {(k!)^2\Gamma(\tfrac32-k)},
 &
 \delta_{1,k}
 &:=2\psi(k+1)-\psi(k-\tfrac12)-\psi(\tfrac32-k),
 \label{eq:D-a1-delta1-def}\\
 \mathfrak a_{2,k}
 &:=\frac{(\tfrac32)_k(-1)^k}
          {k!(k+2)!\Gamma(\tfrac32-k)},
 &
 \delta_{2,k}
 &:=\psi(k+1)+\psi(k+3)-\psi(k+\tfrac32)
       -\psi(\tfrac32-k).
 \label{eq:D-a2-delta2-def}
\end{align}
Thus the infinite sums occurring in $H_1$ and $H_2$ are, respectively,
\begin{equation}
 \sum_{k\ge0}\mathfrak a_{1,k}w^{-k}
       \bigl(\Log(-w)+\delta_{1,k}\bigr),
 \qquad
 \sum_{k\ge0}\mathfrak a_{2,k}w^{-k-2}
       \bigl(\Log(-w)+\delta_{2,k}\bigr).
 \label{eq:D-two-series-compact}
\end{equation}

The reflection identities for the gamma and digamma functions give,
for every integer $k\ge0$,
\begin{equation}
 \frac{1}{\Gamma(\tfrac32-k)}
 = \frac{(-1)^{k+1}}{\pi}\Gamma(k-\tfrac12),
 \qquad
 \psi(\tfrac32-k)=\psi(k-\tfrac12).
 \label{eq:D-reflection-identities}
\end{equation}
Indeed, the first equality follows from
$\Gamma(z)\Gamma(1-z)=\pi/\sin(\pi z)$ with
$z=\tfrac32-k$, while the second follows from
$\psi(1-z)-\psi(z)=\pi\cot(\pi z)$ with
$z=k-\tfrac12$, for which the cotangent term vanishes.

Using $(-\tfrac12)_k=\Gamma(k-\tfrac12)/\Gamma(-\tfrac12)$,
$\Gamma(-\tfrac12)=-2\sqrt\pi$, and
\eqref{eq:D-reflection-identities}, we obtain the exact identity
\begin{equation}
 \mathfrak a_{1,k}
 =-\frac{\Gamma(k-\tfrac12)^2}
          {\pi\Gamma(-\tfrac12)\Gamma(k+1)^2}
 =\frac{1}{2\pi^{3/2}}
   \left(\frac{\Gamma(k-\tfrac12)}{\Gamma(k+1)}\right)^2.
 \label{eq:D-a1-exact}
\end{equation}
The fixed-shift gamma-ratio estimate
\begin{equation}
 \frac{\Gamma(k+\alpha)}{\Gamma(k+\beta)}
 =k^{\alpha-\beta}\bigl(1+O(k^{-1})\bigr)
 \qquad (k\to\infty)
 \label{eq:D-gamma-ratio}
\end{equation}
therefore yields
\begin{equation}
 \mathfrak a_{1,k}
 =\frac{1}{2\pi^{3/2}}k^{-3}
   \bigl(1+O(k^{-1})\bigr).
 \label{eq:D-a1-asymptotic}
\end{equation}
Moreover, the second identity in
\eqref{eq:D-reflection-identities} reduces the digamma correction to
\begin{equation}
 \delta_{1,k}=2\psi(k+1)-2\psi(k-\tfrac12).
 \label{eq:D-delta1-reduced}
\end{equation}
For each fixed $s$,
\begin{equation}
 \psi(k+s)=\log k+\frac{s-\tfrac12}{k}+O(k^{-2}),
 \qquad k\to\infty.
 \label{eq:D-digamma-fixed-shift}
\end{equation}
Substitution of $s=1$ and $s=-\tfrac12$ in
\eqref{eq:D-digamma-fixed-shift} gives
\begin{equation}
 \delta_{1,k}=\frac{3}{k}+O(k^{-2}).
 \label{eq:D-delta1-asymptotic}
\end{equation}

The second series is handled in the same direct manner, but we record
all exponents because they determine the strength of the remainder.
From $(\tfrac32)_k=\Gamma(k+\tfrac32)/\Gamma(\tfrac32)$ and
\eqref{eq:D-reflection-identities},
\begin{equation}
 \mathfrak a_{2,k}
 =-\frac{\Gamma(k+\tfrac32)\Gamma(k-\tfrac12)}
          {\pi\Gamma(\tfrac32)\Gamma(k+1)\Gamma(k+3)}.
 \label{eq:D-a2-exact}
\end{equation}
Since
\begin{equation}
 \frac{\Gamma(k+\tfrac32)}{\Gamma(k+1)}
 =k^{1/2}\bigl(1+O(k^{-1})\bigr),
 \qquad
 \frac{\Gamma(k-\tfrac12)}{\Gamma(k+3)}
 =k^{-7/2}\bigl(1+O(k^{-1})\bigr),
 \label{eq:D-a2-gamma-ratios}
\end{equation}
we have
\begin{equation}
 \mathfrak a_{2,k}
 =-\frac{1}{\pi\Gamma(\tfrac32)}k^{-3}
   \bigl(1+O(k^{-1})\bigr)
 =-\frac{2}{\pi^{3/2}}k^{-3}
   \bigl(1+O(k^{-1})\bigr).
 \label{eq:D-a2-asymptotic}
\end{equation}
Likewise,
\begin{equation}
 \delta_{2,k}
 =\psi(k+1)+\psi(k+3)-\psi(k+\tfrac32)-\psi(k-\tfrac12),
 \label{eq:D-delta2-reduced}
\end{equation}
and \eqref{eq:D-digamma-fixed-shift}, applied with
$s=1,3,\tfrac32,-\tfrac12$, gives
\begin{equation}
 \delta_{2,k}=\frac{3}{k}+O(k^{-2}).
 \label{eq:D-delta2-asymptotic}
\end{equation}

We can now read off convergence without appealing to a differential
equation.  For each fixed $w$ with $|w|>1$, the terms in the two sums
in \eqref{eq:D-two-series-compact} are bounded, for all sufficiently
large $k$, by
\begin{equation}
 C_w k^{-3}|w|^{-k},
 \qquad
 C_w k^{-3}|w|^{-k-2},
 \label{eq:D-pointwise-majorants}
\end{equation}
respectively.  Hence both series converge absolutely.  More precisely,
after putting $q=1/w$, each expression is a linear combination of
$\Log(-w)\sum_{k\ge0}\mathfrak a_{j,k}q^k$ and a series whose
coefficients are $\mathfrak a_{j,k}\delta_{j,k}$.  By
\eqref{eq:D-a1-asymptotic}--\eqref{eq:D-delta2-asymptotic}, the
respective coefficients have orders $k^{-3}$ and $k^{-4}$;
Cauchy--Hadamard therefore gives radius of convergence exactly $1$
in the variable $q$.

The convergence is also locally uniform on
\begin{equation}
 \{w:|w|>1,\ |\Arg(-w)|<\pi\}.
 \label{eq:D-infinity-domain}
\end{equation}
Indeed, if $K$ is a compact subset of this domain, then there are
constants $R_K>1$ and $M_K<\infty$ such that
\begin{equation}
 |w|\ge R_K,
 \qquad
 |\Log(-w)|\le M_K
 \qquad (w\in K).
 \label{eq:D-compact-log-bounds}
\end{equation}
The two summands are consequently dominated on $K$ by the summable
majorants
\begin{equation}
 C_K k^{-3}R_K^{-k},
 \qquad
 C_K k^{-3}R_K^{-k-2}.
 \label{eq:D-compact-majorants}
\end{equation}
The Weierstrass $M$-test proves local uniform convergence; since every
summand is holomorphic there, both sums are holomorphic on
\eqref{eq:D-infinity-domain}.

Finally, the same coefficient estimates give the uniform tail bounds
needed for the expansion at infinity.  Fix $R>1$.  On the slit plane
with $|w|\ge R$,
\begin{equation}
 |\Log(-w)|\le \log|w|+\pi.
 \label{eq:D-global-log-bound}
\end{equation}
After the $k=0$ term has been removed from each infinite sum, the
remaining parts of \eqref{eq:D-two-series-compact} satisfy
\begin{align}
 \left|
   \sum_{k\ge1}\mathfrak a_{1,k}w^{-k}
       \bigl(\Log(-w)+\delta_{1,k}\bigr)
 \right|
 &=O_R\bigl(|w|^{-1}(1+\log|w|)\bigr),
 \label{eq:D-H1-inner-tail}\\
 \left|
   \sum_{k\ge1}\mathfrak a_{2,k}w^{-k-2}
       \bigl(\Log(-w)+\delta_{2,k}\bigr)
 \right|
 &=O_R\bigl(|w|^{-3}(1+\log|w|)\bigr).
 \label{eq:D-H2-inner-tail}
\end{align}
For example,
\[
 \sum_{k\ge1} k^{-3}|w|^{-k}
 \le |w|^{-1}\sum_{k\ge1}k^{-3}R^{-(k-1)},
\]
and the series on the right is a constant depending only on $R$;
the second estimate is identical after factoring out $|w|^{-3}$.
Multiplication by the common outer factor $(-w)^{1/2}$ in
\eqref{eq:H1-infinity-series} and \eqref{eq:H2-infinity-series}
therefore gives
\begin{align}
 H_1(w)-\bigl(\text{its }k=0\text{ contribution}\bigr)
 &=O_R\bigl(|w|^{-1/2}(1+\log|w|)\bigr),
 \label{eq:D-H1-tail-bound}\\
 H_2(w)&-\bigl(\text{finite sum}\bigr)
          -\bigl(\text{$k=0$ term of the infinite sum}\bigr)
 \notag\\
 &=O_R\bigl(|w|^{-5/2}(1+\log|w|)\bigr).
 \label{eq:D-H2-tail-bound}
\end{align}
These are direct quantitative convergence estimates for the two
connection series.

\subsection{Extraction of the first terms}

The gamma and digamma values needed for the $k=0$ terms are
\begin{gather*}
 \Gamma\!\left(-\frac12\right)=-2\sqrt\pi,
 \quad \Gamma\!\left(\frac32\right)=\frac{\sqrt\pi}{2},
 \quad \Gamma\!\left(\frac52\right)=\frac{3\sqrt\pi}{4},
 \quad \Gamma\!\left(\frac72\right)=\frac{15\sqrt\pi}{8},\\
 \psi(1)=-\gamma,
 \quad \psi(3)=\frac32-\gamma,
 \quad
 \psi\!\left(-\frac12\right)
 =\psi\!\left(\frac32\right)
 =2-\gamma-2\log2.
\end{gather*}
Consequently, the $k=0$ bracket in
\eqref{eq:H1-infinity-series} is
\[
 \Log(-w)+2\psi(1)-\psi(-1/2)-\psi(3/2)
 =\Log(-w)+4\log2-4,
\]
whereas the $k=0$ bracket in the infinite sum of
\eqref{eq:H2-infinity-series} is
\[
 \Log(-w)+\psi(1)+\psi(3)-2\psi(3/2)
 =\Log(-w)+4\log2-\frac52.
\]
The uniform tail estimates
\eqref{eq:D-H1-tail-bound}--\eqref{eq:D-H2-tail-bound} imply, uniformly
in the slit plane as $|w|\to\infty$, that
\begin{align}
 H_1(w)={}&-\frac{(-w)^{1/2}}{\pi}
 \left(\Log(-w)+4\log2-4\right)
 +O\bigl(|w|^{-1/2}(1+\log|w|)\bigr),
 \label{eq:H1-infinity-first}\\
 H_2(w)={}&\frac{32}{15\pi}(-w)^{1/2}
 -\frac{8}{3\pi}(-w)^{1/2}w^{-1}
 \notag\\
 &-\frac1\pi(-w)^{-3/2}
 \left(\Log(-w)+4\log2-\frac52\right)
 +O\bigl(|w|^{-5/2}(1+\log|w|)\bigr).
 \label{eq:H2-infinity-first}
\end{align}
The stated uniformity is precisely the uniformity established in the
preceding direct convergence verification.

Now set $w=16z$.  Since
$(-16z)^{1/2}=4(-z)^{1/2}$ and
$\Log(-16z)=\Log(-z)+4\log2$, equations
\eqref{eq:H1-infinity-first}--\eqref{eq:H2-infinity-first} become
\begin{align}
 H_1(16z)={}&-\frac4\pi(-z)^{1/2}
 \left(\Log(-z)+8\log2-4\right)
 +O\bigl(|z|^{-1/2}(1+\log|z|)\bigr),
 \label{eq:H1-16z-infinity}\\
 H_2(16z)={}&\frac{128}{15\pi}(-z)^{1/2}
 +\frac{2}{3\pi}(-z)^{-1/2}
 \notag\\
 &-\frac1{64\pi}(-z)^{-3/2}
 \left(\Log(-z)+8\log2-\frac52\right)
 +O\bigl(|z|^{-5/2}(1+\log|z|)\bigr).
 \label{eq:H2-16z-infinity}
\end{align}
The sign of the second term in \eqref{eq:H2-16z-infinity} follows from
$(-z)^{1/2}z^{-1}=-(-z)^{-1/2}$.

\subsection{Substitution into $F$}

Insert \eqref{eq:H1-16z-infinity}--\eqref{eq:H2-16z-infinity} into
\eqref{eq:F-def}.  The three contributions are
\begin{align*}
 \frac{1+6z}{4z^2}
 &=\frac3{2z}+\frac1{4z^2},\\
 -\frac{H_1(16z)}{4z^2}
 &=\frac1\pi(-z)^{-3/2}
 \left(\Log(-z)+8\log2-4\right)
 +O\bigl(|z|^{-5/2}(1+\log|z|)\bigr),\\
 -\frac{H_2(16z)}{2z}
 &=\frac{64}{15\pi}(-z)^{-1/2}
 +\frac1{3\pi}(-z)^{-3/2}
 +O\bigl(|z|^{-5/2}(1+\log|z|)\bigr).
\end{align*}
Thus the more detailed form of the infinity expansion is
\begin{align}
 F(z)={}&\frac{64}{15\pi}(-z)^{-1/2}+\frac3{2z}
 +\frac1\pi(-z)^{-3/2}
 \left(\Log(-z)+8\log2-\frac{11}{3}\right)
 \notag\\
 &+\frac1{4z^2}
 +O\bigl(|z|^{-5/2}(1+\log|z|)\bigr).
 \label{eq:F-infinity-refined}
\end{align}
Discarding the displayed terms of order
$O(|z|^{-3/2}(1+\log|z|))$ proves \eqref{eq:F-infinity}.

\subsection{Squaring the modulus and identifying the constant}

Set
\[
 \lambda_\infty:=\frac{64}{15\pi},
 \qquad u(z):=(-z)^{-1/2},
 \qquad v(z):=\frac3{2z}.
\]
The coarser form \eqref{eq:F-infinity} says
\[
 F(z)=\lambda_\infty u(z)+v(z)+\mathcal R_\infty(z),
 \qquad
 \mathcal R_\infty(z)
 =O\bigl(|z|^{-3/2}(1+\log|z|)\bigr).
\]
Since $|u(z)|=|z|^{-1/2}$, direct multiplication gives
\begin{align}
 |z|\,|F(z)|^2
 ={}&|z|\bigl(
 \lambda_\infty^2|u|^2
 +2\lambda_\infty\Re(u\overline v)
 +2\lambda_\infty\Re(u\overline{\mathcal R_\infty})
 \notag\\
 &\hspace{18mm}+|v|^2
 +2\Re(v\overline{\mathcal R_\infty})
 +|\mathcal R_\infty|^2\bigr).
 \label{eq:modulus-infinity-expanded}
\end{align}
The six terms on the right have respective orders
\[
 \begin{aligned}
 &\lambda_\infty^2,\qquad
 O(|z|^{-1/2}),\\
 &O\bigl(|z|^{-1}(1+\log|z|)\bigr),\qquad
 O(|z|^{-1}),\\
 &O\bigl(|z|^{-3/2}(1+\log|z|)\bigr),\qquad
 O\bigl(|z|^{-2}(1+\log|z|)^2\bigr).
 \end{aligned}
\]
All bounds are uniform in $z\in\D$.  In particular, every term except the
first is $O(|z|^{-1/2})$, and therefore
\[
 |z|\,|F(z)|^2
 =\lambda_\infty^2+O(|z|^{-1/2})
 =\frac{4096}{225\pi^2}+O(|z|^{-1/2}).
\]
This proves \eqref{eq:Phi-infinity} and identifies the leading constant as
\[
 \boxed{L_\infty=\frac{4096}{225\pi^2}}.
\]
Finally, the rational bounds used after \eqref{eq:constants-exact} give
$0<165\pi-512<46/7<32$.  Hence
\[
 \sigma=\frac{4(165\pi-512)^2}{225\pi^2}
 <\frac{4\cdot32^2}{225\pi^2}
 =L_\infty,
\]
which also supplies the strict comparison required in the far-tail estimates.

\section{Disk-chain proof of the monodromy gluing}
\label{app:monodromy}

This appendix supplies the two facts used in Theorem~\ref{thm:S-delta}: the standard
dented domain is simply connected, and the germ of $S$ at zero can be
continued along every path by a finite chain of open disks.

Retain the notation
\begin{align}
 \B&:=\{y:|y|<\sigma\},\label{eq:app-B}\\
 A_\phi&:=\{y:y\ne\sigma,\ |\Arg(y-\sigma)|>\phi\},\label{eq:app-Aphi}\\
 \Delta&:=\{y:|y|<\sigma+\epsilon\}\cap A_\phi,\label{eq:app-Delta}\\
 L&:=D(\sigma,\epsilon_y)\cap A_\phi,
 \qquad D(c,r):=\{y:|y-c|<r\}.
 \label{eq:app-L}
\end{align}
We may assume $0<\epsilon_y<\sigma$.  The original power-series function on
$\B$ is denoted by $S$, and the local sectorial function on $L$ by $S_L$.

\subsection{The dented domain is star-shaped}

\begin{proposition}\label{prop:Delta-star}
For every $\sigma>0$, $\epsilon>0$, and $0<\phi<\pi/2$, the set
$\Delta(\sigma,\epsilon,\phi)$ is an open star-shaped set containing zero.
Hence it is contractible and simply connected.
\end{proposition}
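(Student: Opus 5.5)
I will verify the three assertions directly from the definition
$\Delta=\{|y|<\sigma+\epsilon\}\cap A_\phi$. Openness is immediate: the disk is open, and $A_\phi$ is the preimage of the open set $\{w\ne0:|\Arg w|>\phi\}$ under $y\mapsto y-\sigma$. Here I read $|\Arg w|>\phi$ with the principal argument in $(-\pi,\pi]$, so the set is the complement of the closed sector $\{w:w=0\text{ or }|\Arg w|\le\phi\}$ and is therefore open. Zero lies in $\Delta$, since $|0|<\sigma+\epsilon$ and $\Arg(0-\sigma)=\pi>\phi$.

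The substance is star-shapedness with respect to $0$. Fix $y\in\Delta$ and $s\in[0,1]$. The disk condition passes to $sy$ at once because $|sy|\le|y|$. For the angular condition I will argue by contradiction. Put $C:=\{w:w=0\text{ or }|\Arg w|\le\phi\}$; this is a closed convex cone, because $\phi<\pi/2$. Suppose $sy\in\sigma+C$. Trivially $0\notin\sigma+C$, since $-\sigma$ has argument $\pi>\phi$. The case $s=0$ is therefore excluded, and we may take $0<s<1$.

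Write $y=\tfrac1s(sy)$, so that $y-\sigma=\tfrac1s(sy-\sigma)+(\tfrac1s-1)\sigma$. The first summand lies in $C$ because $C$ is a cone. The second summand is a positive real number and so also lies in $C$. Since $C$ is convex and closed under positive scaling, it is closed under addition, and hence $y-\sigma\in C$. This contradicts $y\in A_\phi$, so $sy\in A_\phi$ for every $s\in[0,1]$. Geometrically, the excluded closed sector with vertex $\sigma$ opens away from the origin, so a ray from $0$ that enters it never leaves it.

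Contractibility then follows from the straight-line homotopy $H(y,s)=(1-s)y$, which stays in $\Delta$ by star-shapedness. A contractible space is path-connected and has trivial fundamental group, so $\Delta$ is simply connected.

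The only point requiring care is the role of $\phi<\pi/2$, which is exactly what makes $C$ convex and hence closed under addition. I expect no real obstacle beyond stating the argument convention explicitly.
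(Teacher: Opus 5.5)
Your proof is correct and follows the same route as the paper: you use the same closed cone of half-angle $\phi$ with vertex $\sigma$, check that each segment $[0,y]$ avoids it, and conclude with the same straight-line contraction. The only difference is in the key step. The paper verifies $z_t=tz-(1-t)\sigma\notin C_\phi$ by an explicit coordinate inequality, with a case split on the sign of $\re z$ and a separate check that $y_t\ne\sigma$. You get the same conclusion in one line from $y-\sigma=\tfrac1s(sy-\sigma)+(\tfrac1s-1)\sigma$ and closure of the convex cone under addition, and including $0$ in your cone absorbs the $y_t\ne\sigma$ case automatically.
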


\begin{proof}
Let
\[
 C_\phi=\{z:\re z\ge0,\ |\im z|\le(\tan\phi)\re z\}.
\]
This is the closed cone with vertex zero and half-angle $\phi$ about the
positive real axis.  Then $A_\phi=\sigma+(\C\setminus C_\phi)$ is open, and
$0-\sigma=-\sigma\notin C_\phi$, so $0\in\Delta$.

Take $y\in\Delta$ and set $y_t=ty$ for $0\le t\le1$,
$z=y-\sigma$, and $z_t=y_t-\sigma=tz-(1-t)\sigma$.  Clearly
$|y_t|\le|y|<\sigma+\epsilon$.  Also $y_t\ne\sigma$: if $ty=\sigma$, then for
$t<1$ the number $y-\sigma=\sigma(t^{-1}-1)$ is positive real, contrary to
$y\in A_\phi$, while $t=1$ contradicts $y\ne\sigma$.

It remains to show $z_t\notin C_\phi$.  If $\re z\le0$, then for $t<1$,
$\re z_t=t\re z-(1-t)\sigma<0$; the case $t=1$ is already known.  If
$\re z>0$, then $z\notin C_\phi$ means
\begin{equation}\label{eq:outside-cone}
 |\im z|>(\tan\phi)\re z.
\end{equation}
For a fixed $t$, if $\re z_t\le0$ there is nothing to prove.  If
$\re z_t>0$, then
\[
 |\im z_t|=t|\im z|>t(\tan\phi)\re z
 >(\tan\phi)(t\re z-(1-t)\sigma)
 =(\tan\phi)\re z_t.
\]
Thus $z_t\notin C_\phi$ in all cases, so the whole segment $[0,y]$ lies in
$\Delta$.  The homotopy $H(y,s)=(1-s)y$ contracts the domain to zero.
\end{proof}

\subsection{A compatible finite background cover}

Put
\begin{equation}\label{eq:app-K}
 K=\{a:|a|=\sigma,\ |a-\sigma|\ge\epsilon_y/2\}.
\end{equation}
For every $a\in K$, Theorem~\ref{thm:nonprincipal-continuation} gives an open
neighborhood $W_a$ and a function $S_a\in\mathcal O(W_a)$ such that
\begin{equation}\label{eq:Sa-agree}
 S_a=S\quad\text{on }W_a\cap\B.
\end{equation}
The compact set $K$ lies in $A_\phi$: for $|a|=\sigma$ and $a\ne\sigma$,
\[
 \re(a-\sigma)=-\frac{|a-\sigma|^2}{2\sigma}<0.
\]
For $q_a=|a-\sigma|$, choose $\rho_a>0$ so that
\begin{equation}\label{eq:rho-a-conditions}
 D(a,\rho_a)\subset W_a\cap A_\phi,\qquad \rho_a<q_a/2,
\end{equation}
and, when $q_a\ne\epsilon_y$,
\begin{equation}\label{eq:rho-a-radial}
 \rho_a<|q_a-\epsilon_y|.
\end{equation}
Choose distinct $a_1,\ldots,a_m\in K$ such that the disks
$D_j=D(a_j,\rho_j)$ cover $K$, put $S_j=S_{a_j}|_{D_j}$, and let
$N=\bigcup_{j=1}^mD_j$.

\begin{lemma}[Compatibility of circle disks]\label{lem:disk-disk}
If $D_i\cap D_j\ne\varnothing$, then $S_i=S_j$ on $D_i\cap D_j$.
\end{lemma}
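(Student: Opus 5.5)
The plan is to show that the overlap $D_i\cap D_j$ is connected and meets the disk $\B$. Both $S_i$ and $S_j$ agree with $S$ on that nonempty open piece by \eqref{eq:Sa-agree}, so the identity theorem then forces $S_i=S_j$ on the whole overlap. Connectedness is automatic, since the intersection of two open disks is convex. The real content is therefore that $D_i\cap D_j\cap\B\neq\varnothing$.

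For this step I would use the geometry of disks centred on the circle $|y|=\sigma$. Since $\rho_a<q_a/2\le\sigma$, each $D_j$ is a disk of radius smaller than $\sigma$ centred at a point $a_j$ with $|a_j|=\sigma$.

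Let $p\in D_i\cap D_j$. I would look for a point of $\B$ near $p$ that stays in both disks. The natural candidate is $p_s:=(1-s)p$ for small $s>0$, or, if $|p|<\sigma$ already, $p$ itself.

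The key inequality is that the radial inward move decreases the distance to any centre $a$ on the circle once the point is near $a$. Compute
\[
|(1-s)p-a|^2=|p-a|^2-2s\,\re\bigl(\bar p(p-a)\bigr)+s^2|p|^2,
\]
and observe that
\[
\re\bigl(\bar p(p-a)\bigr)=\tfrac12\bigl(|p|^2-\sigma^2+|p-a|^2\bigr),
\]
using $|a|=\sigma$. If $|p|\ge\sigma$, this quantity is positive (it is $\ge\tfrac12|p-a|^2>0$ unless $p=a$). In that case the derivative in $s$ at $s=0$ of $|p_s-a|^2$ is negative, so for small $s$ the point $p_s$ lies in both $D_i$ and $D_j$, by openness and strict decrease. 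The exceptional case $p=a_i$ is handled by openness alone: $D_i$ contains all points near $a_i$, and the decrease from $a_j$ still applies. Taking $s$ small enough gives a point of $D_i\cap D_j$ with $|p_s|<\sigma$, that is, a point of $\B$.

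The main obstacle I anticipate is bookkeeping rather than depth. One must make sure the chosen point also lies in the domains $W_{a_i}\cap\B$ and $W_{a_j}\cap\B$ where \eqref{eq:Sa-agree} applies. This holds because $D_j\subset W_{a_j}$ by \eqref{eq:rho-a-conditions}. The second check is that the overlap of $D_i\cap D_j$ with $\B$, where both functions equal $S$, is a nonempty open set inside the connected set $D_i\cap D_j$. Once that is in place, the identity theorem on the convex domain $D_i\cap D_j$ finishes the proof.
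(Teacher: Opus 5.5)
\textbf{Gap: reaching $\B$ when $|p|>\sigma$.} Your overall structure is the paper's. The overlap of two disks is convex, both $S_i$ and $S_j$ equal $S$ on its intersection with $\B$ by \eqref{eq:Sa-agree}, and the identity theorem finishes once $D_i\cap D_j\cap\B\ne\varnothing$. The gap is in producing that point when your common point $p$ has $|p|>\sigma$, which can happen because the disks protrude outside $\B$.

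Your derivative computation at $s=0$ only shows that $|p_s-a_i|$ and $|p_s-a_j|$ decrease for $s$ in some small interval $[0,s_1)$. Reaching $\B$ requires $|p_s|=(1-s)|p|<\sigma$, that is, $s>1-\sigma/|p|$. This is a fixed positive amount, unrelated to how small $s_1$ is. So the sentence ``taking $s$ small enough gives a point of $D_i\cap D_j$ with $|p_s|<\sigma$'' is false in that case. It is correct only when $|p|=\sigma$, which includes your exceptional case $p=a_i$.

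\textbf{How to repair it.} The repair is short but must be stated. Your identity holds at every point of the segment, not just at $s=0$, and gives
\[
 \frac{\dd}{\dd s}|p_s-a|^2=-\frac{1}{1-s}\bigl(|p_s|^2-\sigma^2+|p_s-a|^2\bigr)<0
\]
whenever $|p_s|\ge\sigma$ and $p_s\ne a$. Hence both distances decrease monotonically all the way to $s_0=1-\sigma/|p|$, and openness then lets you step slightly past the circle into $\B$.

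Equivalently, the radial retraction $y\mapsto\sigma y/|y|$ is the nearest-point projection onto the closed disk $\overline{\B}$. It is therefore $1$-Lipschitz and fixes $a_i,a_j$, so it maps $p$ to a point of $D_i\cap D_j$ on the circle. From there your small-$s$ argument works.

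The paper sidesteps the issue. Since $|a_i-a_j|<\rho_i+\rho_j$, some chord point $(1-t)a_i+ta_j$ with $0<t<1$ lies in both disks. Strict convexity of $\B$ then places it strictly inside $\B$, with no case distinction on $|p|$.
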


\begin{proof}
Assume $i\ne j$.  Intersecting open disks imply
$|a_i-a_j|<\rho_i+\rho_j$, so for some $0<t<1$ the point
$z=(1-t)a_i+ta_j$ lies in their intersection.  Since the open disk $\B$ is
strictly convex and $a_i,a_j$ are distinct points of its boundary,
$|z|<(1-t)|a_i|+t|a_j|=\sigma$.  Thus
$D_i\cap D_j\cap\B$ is a nonempty open set.  There
$S_i=S=S_j$ by \eqref{eq:Sa-agree}.  The intersection of two disks is convex
and hence connected, so the identity theorem proves equality everywhere on
the intersection.
\end{proof}

Before comparing $D_j$ with $L$, note that if $y\in\B$ and $z=y-\sigma$, then
\[
 |\sigma+z|^2<\sigma^2\Longrightarrow
 2\sigma\re z+|z|^2<0\Longrightarrow\re z<0.
\]
Since $\phi<\pi/2$, this shows
\begin{equation}\label{eq:L-B-convex}
 L\cap\B=D(\sigma,\epsilon_y)\cap\B,
\end{equation}
a nonempty convex intersection of two disks.  The functions $S_L$ and $S$
agree first on a short real interval to the left of $\sigma$, and therefore by
the identity theorem on all of $L\cap\B$.

\begin{lemma}[Compatibility with the local sector]\label{lem:disk-local}
For every $j$, if $D_j\cap L\ne\varnothing$, then $D_j\cap L$ is connected,
$D_j\cap L\cap\B\ne\varnothing$, and $S_j=S_L$ on $D_j\cap L$.
\end{lemma}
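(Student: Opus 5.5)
The plan is to exploit the radial condition \eqref{eq:rho-a-radial}, which forces every disk $D_j$ that meets $D(\sigma,\epsilon_y)$ to lie in a regime where it meets it only near its own center. Put $q_j=|a_j-\sigma|\ge\epsilon_y/2$. If $D_j\cap L\neq\varnothing$, some $y\in D_j$ has $|y-\sigma|<\epsilon_y$, so $q_j<\epsilon_y+\rho_j$. By \eqref{eq:rho-a-radial}, either $q_j<\epsilon_y$, or $q_j=\epsilon_y$, or $q_j>\epsilon_y$ with $\rho_j<q_j-\epsilon_y$; the last case is impossible. So $q_j\le\epsilon_y$.

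In the case $q_j<\epsilon_y$ we have $\rho_j<\epsilon_y-q_j$, hence $D_j\subset D(\sigma,\epsilon_y)$. Since $D_j\subset A_\phi$ by \eqref{eq:rho-a-conditions}, we get $D_j\subset L$ and $D_j\cap L=D_j$, which is convex and therefore connected. In the boundary case $q_j=\epsilon_y$, we again use $D_j\subset A_\phi$ to obtain $D_j\cap L=D_j\cap D(\sigma,\epsilon_y)$. This is an intersection of two disks, hence convex and connected, and it is nonempty by assumption.

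Next we show $D_j\cap L\cap\B\neq\varnothing$. Since $a_j\in\partial\B$ and $a_j\in\overline{D(\sigma,\epsilon_y)}$, points $a_j-s a_j$ for small $s>0$ move into $\B$. We must check that they stay inside $D(\sigma,\epsilon_y)$. Compute $|(1-s)a_j-\sigma|^2=q_j^2-2s\,\re(a_j\overline{(a_j-\sigma)})+O(s^2)$. Here $\re(a_j\overline{(a_j-\sigma)})=\sigma^2-\sigma\re a_j=q_j^2/2>0$, so this quantity decreases below $q_j^2\le\epsilon_y^2$. For small $s$ the point also lies in $D_j$, hence in $D_j\cap L\cap\B$. (In the case $q_j<\epsilon_y$ this step is immediate even without the computation.)

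Finally, on $D_j\cap L\cap\B$ we have $S_j=S$ by \eqref{eq:Sa-agree}, and $S_L=S$ by \eqref{eq:L-B-convex} and the agreement on $L\cap\B$ established just before the lemma. Since $D_j\cap L$ is connected and this open subset is nonempty, the identity theorem gives $S_j=S_L$ on all of $D_j\cap L$.

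The main obstacle is the boundary case $q_j=\epsilon_y$, where \eqref{eq:rho-a-radial} gives no containment. There connectedness must come from convexity, after using $D_j\subset A_\phi$ to discard the sector condition. The small computation above then supplies the common point with $\B$.
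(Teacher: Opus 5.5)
Your proof is correct and follows essentially the same route as the paper: the case split on $q_j=|a_j-\sigma|$ via the radial condition \eqref{eq:rho-a-radial}, the containment $D_j\subset L$ when $q_j<\epsilon_y$, convexity of $D_j\cap D(\sigma,\epsilon_y)$ when $q_j=\epsilon_y$, the test points $(1-s)a_j$ to find a common point with $\B$, and then the identity theorem. The only differences are cosmetic: you rule out $q_j>\epsilon_y$ directly from the nonemptiness hypothesis, and you run the $(1-s)a_j$ computation uniformly for $q_j\le\epsilon_y$, whereas the paper needs it only in the boundary case.
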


\begin{proof}
Let $q_j=|a_j-\sigma|$.  Because $D_j\subset A_\phi$, it remains only to
compare it with $D(\sigma,\epsilon_y)$.  If $q_j>\epsilon_y$, conditions
\eqref{eq:rho-a-radial} and the reverse triangle inequality make the two disks
disjoint.  If $q_j<\epsilon_y$, the triangle inequality gives
$D_j\subset D(\sigma,\epsilon_y)$, so $D_j\subset L$.  The points
$z_t=(1-t)a_j$ lie in $D_j\cap L\cap\B$ for sufficiently small $t>0$.

If $q_j=\epsilon_y$, then
$D_j\cap L=D_j\cap D(\sigma,\epsilon_y)$ is convex.  Since
$|a_j|=\sigma$ and $|a_j-\sigma|=\epsilon_y$,
\[
 \re((a_j-\sigma)\overline{a_j})=\epsilon_y^2/2.
\]
Hence
\[
 |(1-t)a_j-\sigma|^2
 =|(a_j-\sigma)-ta_j|^2
 =\epsilon_y^2-t\epsilon_y^2+t^2\sigma^2<\epsilon_y^2
\]
for small $t>0$; choosing also $t\sigma<\rho_j$ puts $z_t$ in the triple
intersection.  In each nonempty case, $S_j=S=S_L$ on a nonempty open subset
of the connected intersection, and the identity theorem completes the proof.
\end{proof}

Thus the finite background cover
\begin{equation}\label{eq:background-cover}
 \mathcal U=\{\B,D_1,\ldots,D_m,L\}
\end{equation}
carries functions $F_{\B}=S$, $F_{D_j}=S_j$, $F_L=S_L$ that agree on every
nonempty pairwise intersection.

Since $K\subset N$, $K$ is compact, and $N$ is open,
\[
 d:=\dist(K,\C\setminus N)>0.
\]
Choose $0<\epsilon<\min\{d/2,\epsilon_y/4\}$.

\begin{lemma}[Cover of the full $\Delta$-domain]\label{lem:Delta-cover-app}
For this choice of $\epsilon$,
\[
 \Delta(\sigma,\epsilon,\phi)\subset\B\cup N\cup L.
\]
\end{lemma}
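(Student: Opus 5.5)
The plan is the radial-projection argument already used in the proof of Theorem~\ref{thm:S-delta}, now with the finite cover $N=\bigcup_j D_j$ and the constants $d$, $\epsilon$ fixed just above. Take $y\in\Delta(\sigma,\epsilon,\phi)$. If $|y|<\sigma$, then $y\in\B$ and there is nothing to prove. So assume $\sigma\le|y|<\sigma+\epsilon$, and split according to the distance from $\sigma$.

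\emph{Case $|y-\sigma|<\epsilon_y$.} Since $y\in\Delta$, we have $y\ne\sigma$ and $|\Arg(y-\sigma)|>\phi$. That is, $y\in A_\phi$. Together with $y\in D(\sigma,\epsilon_y)$, the definition \eqref{eq:app-L} gives $y\in L$.

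\emph{Case $|y-\sigma|\ge\epsilon_y$.} Set $y^*:=\sigma y/|y|$, which is legitimate because $|y|\ge\sigma>0$. Then $|y^*|=\sigma$, and since $y$ and $y^*$ lie on the same ray,
\[
 |y-y^*|=|y|-\sigma<\epsilon.
\]
The reverse triangle inequality then gives
\[
 |y^*-\sigma|\ge|y-\sigma|-|y-y^*|>\epsilon_y-\epsilon>\epsilon_y-\frac{\epsilon_y}{4}>\frac{\epsilon_y}2,
\]
using $\epsilon<\epsilon_y/4$. Hence $y^*\in K$ as defined in \eqref{eq:app-K}. It follows that
\[
 \dist(y,K)\le|y-y^*|<\epsilon<\frac d2<d=\dist(K,\C\setminus N).
\]
If $y$ were in $\C\setminus N$, then $\dist(y,K)\ge d$, which is a contradiction. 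So $y\in N$.

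The two cases are exhaustive, which proves the inclusion. The only point that needs any care is the strict inequality placing $y^*$ in $K$. It follows from the choice $\epsilon<\epsilon_y/4$, which leaves a margin of $\epsilon_y/4$ beyond the required $\epsilon_y/2$. Positivity of $d$ is also needed, and it was already established from compactness of $K\subset N$ with $N$ open. No step should present a genuine obstacle: the lemma is purely metric and uses neither the angle $\phi$ nor any analytic information beyond the definitions of $L$, $K$ and $N$.
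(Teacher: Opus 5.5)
Your proof is correct and follows the paper's argument essentially verbatim. Both use the same trichotomy: either $y\in\B$; or $y\in L$, using $y\in A_\phi$ from $y\in\Delta$; or the radial projection $y^*=\sigma y/|y|$ lies in $K$ with $|y-y^*|<\epsilon<d$, which forces $y\in N$. You simply write out the strict inequality $|y^*-\sigma|>\epsilon_y-\epsilon>\epsilon_y/2$ and the distance contradiction in slightly more detail.
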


\begin{proof}
The proof is the radial projection argument in
\eqref{eq:Delta-cover}.  If $|y|<\sigma$, then $y\in\B$.  Otherwise, if
$|y-\sigma|<\epsilon_y$, then $y\in L$.  In the remaining case let
$y^*=\sigma y/|y|$.  Then $|y-y^*|<\epsilon$ and
\[
 |y^*-\sigma|>|y-\sigma|-\epsilon
 >\epsilon_y-\epsilon>\epsilon_y/2,
\]
so $y^*\in K$.  If $y\notin N$, then
$d\le|y-y^*|<\epsilon<d$, a contradiction.
\end{proof}

\subsection{Continuation along an arbitrary path}

\begin{proposition}\label{prop:path-continuation}
Let $\gamma:[0,1]\to\Delta$ be continuous with $\gamma(0)=0$.  The germ
$[S]_0$ can be analytically continued along $\gamma$ by a finite sequence of
function elements on open disks.
\end{proposition}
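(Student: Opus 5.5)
The plan is a Lebesgue-number argument applied to the finite background cover $\mathcal U=\{\B,D_1,\ldots,D_m,L\}$ of \eqref{eq:background-cover}.

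\textbf{Step 1: a covering of the path.} Lemma~\ref{lem:Delta-cover-app} gives $\Delta\subset\B\cup N\cup L$, so the path $\gamma$ takes values in this union. The image $\gamma([0,1])$ is a compact subset of the open set $\Delta$. Hence there is $r>0$ such that:
\begin{itemize}
\item for every $s$, the disk $D(\gamma(s),r)$ lies in $\Delta$;
\item for every $s$, the disk $D(\gamma(s),r)$ lies entirely in at least one member of $\mathcal U$.
\end{itemize}
The second property is the Lebesgue number lemma, applied to the open cover of the compact set $\gamma([0,1])$ by the members of $\mathcal U$.

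\textbf{Step 2: a partition and disk elements.} By uniform continuity, choose a partition $0=s_0<s_1<\cdots<s_k=1$ such that $|\gamma(s)-\gamma(s_i)|<r/2$ whenever $s\in[s_{i-1},s_{i+1}]$. Put $\Delta_i:=D(\gamma(s_i),r)$. For each $i$, fix a member $U_i\in\mathcal U$ with $\Delta_i\subset U_i$. Define the function element on $\Delta_i$ by $g_i:=F_{U_i}|_{\Delta_i}$. For $i=0$ we may take $U_0=\B$, since $\gamma(0)=0$ and, after shrinking $r$ below $\sigma$ if necessary, $\Delta_0\subset\B$. Then $g_0$ is a restriction of the germ $[S]_0$.

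\textbf{Step 3: consecutive elements agree.} The point $\gamma(s_{i+1})$ lies in $\Delta_i\cap\Delta_{i+1}$, so this intersection of two disks is nonempty, convex, and hence connected. It is contained in $U_i\cap U_{i+1}$. By Lemmas~\ref{lem:disk-disk} and \ref{lem:disk-local} and the identity $S_L=S$ on $L\cap\B$, the functions $F_{U_i}$ and $F_{U_{i+1}}$ agree on the whole of $U_i\cap U_{i+1}$ whenever it is nonempty. In particular $g_i=g_{i+1}$ on $\Delta_i\cap\Delta_{i+1}$.

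\textbf{The main obstacle} is this pairwise compatibility on all of $U_i\cap U_{i+1}$, not merely on one of its components. For $\B\cap D_j$ and $\B\cap L$ it holds by definition of $S_j$ and by \eqref{eq:L-B-convex}. For $D_i\cap D_j$ and $D_j\cap L$ it is exactly the content of Lemmas~\ref{lem:disk-disk}--\ref{lem:disk-local}, which establish connectedness of these intersections together with a common point in $\B$.

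\textbf{Conclusion.} Each $\gamma([s_i,s_{i+1}])$ lies in $\Delta_i\cup\Delta_{i+1}$. Therefore $(\Delta_i,g_i)_{i=0}^k$ is a finite chain of open-disk function elements continuing $[S]_0$ along $\gamma$. Combined with the simple connectivity of Proposition~\ref{prop:Delta-star}, the monodromy theorem then yields the single-valued continuation.
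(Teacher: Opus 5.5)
Your proposal is correct and follows essentially the paper's argument: compactness produces a finite chain of disks in $\Delta$, each contained in a member of the background cover $\mathcal U$. Consecutive elements then agree by the pairwise compatibilities of Lemmas~\ref{lem:disk-disk}--\ref{lem:disk-local} and \eqref{eq:L-B-convex}. The only difference is cosmetic: you take a uniform radius from the Lebesgue number of the compact image $\gamma([0,1])\subset\C$, in its standard form for finite open covers where every full disk of that radius centred on the compact set lies in one cover member, and then use uniform continuity; the paper instead applies the Lebesgue number lemma in $[0,1]$ to the preimages $\gamma^{-1}(\Omega_s)$ of variable-radius disks.
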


\begin{proof}
For each $s\in[0,1]$, choose $U_s\in\mathcal U$ containing $\gamma(s)$.
Since $U_s\cap\Delta$ is a neighborhood of $\gamma(s)$, choose $r_s>0$ such
that
\[
 \Omega_s=D(\gamma(s),r_s)\subset U_s\cap\Delta,
 \qquad f_s=F_{U_s}|_{\Omega_s}.
\]
The sets $I_s=\gamma^{-1}(\Omega_s)$ form an open cover of $[0,1]$.  By the
Lebesgue number lemma there is $\lambda>0$ such that every subset of diameter
less than $\lambda$ lies in some $I_s$.  Choose
\[
 0=t_0<t_1<\cdots<t_n=1,
 \qquad t_k-t_{k-1}<\lambda.
\]
For every $k$, choose $s_k$ with
$[t_{k-1},t_k]\subset I_{s_k}$ and abbreviate
$\Omega_k=\Omega_{s_k}$, $f_k=f_{s_k}$, $U_k=U_{s_k}$.
Then
\[
 \gamma(t_k)\in\Omega_k\cap\Omega_{k+1}\subset U_k\cap U_{k+1}.
\]
Compatibility of the background functions gives
$f_k=f_{k+1}$ on this disk intersection, so the next function element is a
direct analytic continuation of the preceding one at the common point.
Finally, $0\in\Omega_1$ and the same compatibility gives
$[f_1]_0=[S]_0$, even if $U_1\ne\B$.  Thus
$(\Omega_1,f_1),\ldots,(\Omega_n,f_n)$ is the required finite disk chain.
\end{proof}

\begin{theorem}[Monodromy conclusion]\label{thm:monodromy-app}
The germ $[S]_0$ has a unique single-valued holomorphic continuation to
$\Delta(\sigma,\epsilon,\phi)$.  It equals the original power series on
$\B$ and the local sectorial function on $L\cap\Delta$.
\end{theorem}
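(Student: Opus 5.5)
The plan is to combine Proposition~\ref{prop:Delta-star}, Proposition~\ref{prop:path-continuation}, and the classical monodromy theorem. By Proposition~\ref{prop:Delta-star}, $\Delta=\Delta(\sigma,\epsilon,\phi)$ is star-shaped about $0$, hence simply connected. Proposition~\ref{prop:path-continuation} shows that the germ $[S]_0$ continues along every path in $\Delta$ starting at $0$. The monodromy theorem therefore applies. Two paths from $0$ to a common endpoint are homotopic in $\Delta$ with fixed endpoints, for instance through the straight-line homotopy toward the star centre. Continuations along homotopic paths give the same terminal germ. Defining $\widetilde S(y)$ as the value of the terminal germ along any path from $0$ to $y$ thus gives a well-defined function. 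It is holomorphic, since near each $y$ it coincides with a single function element $(\Omega_n,f_n)$ of the chain.

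I would make the homotopy step concrete rather than cite it abstractly. Each endpoint germ is in fact the germ of one of the background functions $F_U$, $U\in\mathcal U$. Given a homotopy $H:[0,1]^2\to\Delta$, I take a Lebesgue number for the open cover $\{H^{-1}(\Omega)\}$ of the unit square, where the $\Omega$ range over disks inside members of $\mathcal U\cap\Delta$. I then subdivide the square into small squares, each mapped into one such disk. The pairwise compatibility on $\mathcal U$ (Lemmas~\ref{lem:disk-disk} and \ref{lem:disk-local}, together with the agreement of $S_L$ and $S$ on $L\cap\B$) lets me move the path across one small square at a time without changing the terminal germ. This is the standard monodromy argument. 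Its only inputs are these compatibilities and the connectedness of the relevant intersections: intersections of disks are convex, and $D_j\cap L$ is connected by Lemma~\ref{lem:disk-local}.

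Uniqueness is immediate from the identity theorem, because $\Delta$ is connected and any two continuations agree near $0$.

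For the identifications, take $y\in\B$ and continue along the segment $[0,y]\subset\B$ using the element $(\B,S)$ alone; this gives $\widetilde S=S$ on $\B$. For $y\in L\cap\Delta$, choose a path that goes from $0$ into $L\cap\B$ inside $\B$ and then stays in $L$. This is possible because $L$ is connected (a disk intersected with the complement of a closed cone through its centre) and $L\cap\B\neq\varnothing$. Along the part of the path inside $L$, the chain may use the single element $S_L$, and $S_L=S$ on $L\cap\B$, so $\widetilde S=S_L$ on $L\cap\Delta$.

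The main obstacle is only bookkeeping. One must check that the Lebesgue-number subdivision of the homotopy square produces consecutive elements drawn from members of $\mathcal U$ whose overlaps are nonempty and connected, so that pairwise compatibility propagates. Lemmas~\ref{lem:disk-disk}--\ref{lem:disk-local} are exactly what supply this.
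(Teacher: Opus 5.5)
Your proposal follows the paper's proof: simple connectivity from Proposition~\ref{prop:Delta-star}, path continuation from Proposition~\ref{prop:path-continuation}, the monodromy theorem, and the identifications obtained by choosing chains inside the background sets. Your routing from $0$ through $L\cap\B$ is more explicit than the paper's one-line remark, and necessarily so, because $0\notin L$ and the chain therefore cannot lie entirely in $L$. One small correction: since $\epsilon<\epsilon_y/4$ is otherwise arbitrary, $L$ need not lie in $\Delta$, so the last leg of your path must run in $L\cap\Delta=\bigl(D(\sigma,\epsilon_y)\cap D(0,\sigma+\epsilon)\bigr)\cap A_\phi$. What you need is therefore the connectedness of this set rather than of $L$. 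It holds because it is a convex set with a closed cone removed at the interior vertex $\sigma$: every point can be joined to a small circle about $\sigma$ by a radial segment, and the arc of that circle outside the cone is connected.
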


\begin{proof}
By Proposition~\ref{prop:Delta-star}, the domain is simply connected; by
Proposition~\ref{prop:path-continuation}, the germ can be continued along every path from
the base point.  The monodromy theorem \cite{Forster} says that the terminal germ depends
only on the endpoint and therefore determines a unique holomorphic function
on the whole domain.  When the endpoint is in $\B$ or $L$, one may choose the
disk chain inside the corresponding background set, so uniqueness gives the
stated identifications.
\end{proof}

\begin{remark}
Once all pairwise compatibilities in \eqref{eq:background-cover} have been
proved, the holomorphic gluing lemma already defines a function on
$\B\cup N\cup L$; restricting it to $\Delta$ is an alternative to invoking
monodromy.  The explicit path proof is retained to match the usual textbook
definition of analytic continuation.
\end{remark}

\section*{Acknowledgements}
The author gratefully acknowledges ChatGPT 5.6 Sol (OpenAI) for its assistance in checking, refining, and strengthening the proofs presented in this paper; all arguments and conclusions remain the sole responsibility of the author.

\end{document}